\newcommand{\gauss}[2]{\genfrac{[}{]}{0pt}{}{#1}{#2}}
\newcommand{\scrR}{\mathcal{R}}
\newcommand{\scrP}{\mathcal{P}}
\newcommand{\bbC}{\mathbb{C}}
\newcommand{\bbR}{\mathbb{R}}
\newcommand{\bbZ}{\mathbb{Z}}
\newcommand{\bbF}{\mathbb{F}}
\renewcommand{\epsilon}{e}
\newcommand{\even}{0}
\newcommand{\odd}{1}
\newcommand{\codim}{\text{codim}}
\newtheorem{satz}[equation]{Theorem}
\newtheorem{lemma}[equation]{Lemma}
\newtheorem{prop}[equation]{Proposition}
\newtheorem{kor}[equation]{Corollary}
\newtheorem{conjecture}[equation]{Conjecture}
\newtheorem{defi}[equation]{Definition}
\newtheorem{beispiel}[equation]{Example}
\newtheorem{Remark}[equation]{Remark}
\numberwithin{equation}{section}
\begin{document}

\title{Large $\{0, 1, \ldots, t\}$-Cliques in Dual Polar Graphs}

\author{Ferdinand Ihringer}
\address{ %
Justus-Liebig-Universit\"at,
Mathematisches Institut,
Arndtstra\ss e 2, D-35392 Gie\ss en,
Germany}
\email{Ferdinand.Ihringer@math.uni-giessen.de}

\author{Klaus Metsch}
\address{Justus-Liebig-Universit\"at,
Mathematisches Institut,
Arndtstra\ss e 2,
D-35392 Gie\ss en,
Germany.}
\email{Klaus.Metsch@math.uni-giessen.de}

\subjclass[2010]{51E20; 05B25; 52C10}

\begin{abstract}
  We investigate $\{0, 1, \ldots, t \}$-cliques of generators on dual polar graphs of finite classical polar spaces of rank $d$. These cliques are also known as Erd\H{o}s-Ko-Rado sets in polar spaces of generators with pairwise intersections in at most codimension $t$. Our main result is that we classify all such cliques of maximum size for $t \leq \sqrt{8d/5}-2$ if $q \geq 3$, and $t \leq \sqrt{8d/9}-2$ if $q = 2$.  We have the following byproducts.
  \begin{enumerate}[(a)]
   \item For $q \geq 3$ we provide estimates of Hoffman's bound on these $\{0, 1, \ldots, t \}$-cliques for all $t$.
   \item For $q \geq 3$ we determine the largest, second largest, and smallest eigenvalue of the
  graphs which have the generators of a polar space as vertices and where two generators are adjacent
  if and only if they meet in codimension at least $t+1$. Furthermore, we provide nice explicit formulas
  for all eigenvalues of these graphs.
    \item We provide upper bounds on the size of the second largest maximal $\{0, 1, \ldots, t \}$-cli\-ques for some $t$.
  \end{enumerate}
  
  \smallskip
\noindent \textbf{Erd\H{o}s-Ko-Rado Theorem; Polar Space; Distance-regular graph; Independent Set}
\end{abstract}

\maketitle

\section{Introduction}

A \textit{clique} of a graph is a set of pairwise adjacent vertices of a graph.
Determining the maximum size of a clique, the so-called clique number, is a classical problem in graph theory.
For some graphs, cliques are traditionally called Erd\H{o}s-Ko-Rado sets (\textit{EKR set}).
EKR sets were introduced by Erd\H{o}s, Ko, and Rado \cite{MR0140419} in 1961 as a family $Y$ of $k$-element subsets of $\{ 1, \ldots, n\}$ where the elements of $Y$ pairwise intersect in at least $t$ elements.
Erd\H{o}s, Ko, and Rado classified all such $Y$ of maximum size for $t=1$.

\begin{satz}[Theorem of Erd\H{o}s, Ko, and Rado]\label{thm_ekr}
  Let $n \geq 2k$. Let $Y$ be an EKR set of $k$-element subsets of $\{1, \ldots, n\}$. Then
  \begin{align*}
   |Y| \leq \binom{n-1}{k-1}
  \end{align*}
  with equality for $n>2k$ if and only if $Y$ is set of all $k$-sets containing a fixed element.
\end{satz}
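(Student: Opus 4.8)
The plan is to prove the bound by Katona's cyclic-permutation counting and then to extract the uniqueness statement from the extremal case of that count. The heart of the argument is the following \emph{circle lemma}: if the symbols of $\{1, \ldots, n\}$ are placed around a circle and we call a set of $k$ cyclically consecutive symbols an \emph{arc}, then, provided $n \geq 2k$, any intersecting family can contain at most $k$ of the $n$ arcs of a fixed circular arrangement.

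First I would prove the circle lemma. Fix a circular arrangement and suppose some arc $A$ lies in the intersecting family $Y$; after relabelling the cyclic order we may take $A = \{1, \ldots, k\}$. Every arc disjoint from $A$ starts at a position in $\{k+1, \ldots, n-k+1\}$, so any \emph{other} arc meeting $A$ must start at one of the positions $2, \ldots, k$ (entering $A$ from the right) or $n-k+2, \ldots, n$ (entering from the left), giving $2k-2$ candidate arcs. I pair the arc starting at $1+j$ with the arc starting at $1+j-k \pmod n$ for $j = 1, \ldots, k-1$; the two starting positions in each pair differ by exactly $k$, so (using $n \geq 2k$, which prevents wrap-around overlap) the two arcs are disjoint. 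Since $Y$ is intersecting, at most one arc from each of these $k-1$ pairs lies in $Y$, and together with $A$ itself this gives at most $k$ arcs of the arrangement in $Y$.

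Next comes the double count. I consider all $n!$ ways of placing the $n$ symbols at $n$ labelled positions on the circle and count pairs consisting of a placement together with an arc of that placement lying in $Y$. By the circle lemma this total is at most $n! \cdot k$. On the other hand, a fixed $A \in Y$ occurs as an arc in exactly $n \cdot k! \cdot (n-k)!$ placements: choose one of the $n$ blocks of $k$ consecutive positions, order the elements of $A$ inside it, and order the complement outside. Comparing the two counts gives
\begin{align*}
  |Y| \cdot n \cdot k! \cdot (n-k)! \leq n! \cdot k,
\end{align*}
which rearranges to $|Y| \leq \binom{n-1}{k-1}$, the asserted bound.

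The main obstacle is the uniqueness statement for $n > 2k$. Equality in the double count forces every circular arrangement to contain exactly $k$ arcs of $Y$, so I would first classify the extremal families of the circle lemma: when $n > 2k$ the only way to pick $k$ pairwise-intersecting arcs is to take all arcs through a common point (the $k$ arcs sliding past one fixed position), which needs a short but careful analysis of the disjoint-pair structure above to exclude mixed configurations; note that strictness $n > 2k$ is exactly what kills the extra freedom present when $n = 2k$. The delicate part is then to \emph{globalize} this local information: knowing that each arrangement contributes a point-pencil, I must show that a single element of $\{1, \ldots, n\}$ is forced across all arrangements, so that $Y$ is precisely the star of all $k$-sets through one fixed element. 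I expect this gluing step, rather than the counting, to demand the most care; an alternative that sidesteps it is to apply the standard shifting (compression) operators, which preserve both $|Y|$ and the intersecting property and reduce any extremal family to a compressed one whose unique form is directly recognizable as a star.
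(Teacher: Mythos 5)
The paper does not prove Theorem \ref{thm_ekr} at all: it is quoted as the classical 1961 result of Erd\H{o}s, Ko, and Rado, with the reference \cite{MR0140419} standing in for a proof. So there is nothing in the paper to compare your argument against; I can only assess it on its own terms. Your proof of the inequality is Katona's cycle method and is correct and complete: the identification of the $2k-2$ arcs meeting a fixed arc $A$, the pairing of the arc starting at $1+j$ with the one starting at $1+j-k \pmod n$ into disjoint pairs (valid precisely because $n\ge 2k$), the resulting bound of $k$ arcs per arrangement, and the double count $|Y|\cdot n\cdot k!\,(n-k)! \le n!\cdot k$ all check out and yield $|Y|\le\binom{n-1}{k-1}$.

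The uniqueness statement for $n>2k$, however, is not proved; it is only a plan, and you say so yourself. Two substantive steps are missing. First, the classification of the extremal configurations of the circle lemma: you assert that for $n>2k$ the only families of $k$ pairwise intersecting arcs are the $k$ arcs through a common position, but the ``short but careful analysis'' excluding mixed choices from the disjoint pairs is exactly where the work lies (for $n$ between $2k+1$ and roughly $3k$ the wrap-around interactions between the arcs $[1+j,\,j+k]$ and $[n+1+i-k,\,i]$ are not immediate, so this needs to be written out). Second, the globalization: even granting that every arrangement contributes a point-pencil, you must show the distinguished element is the same one for all $n!$ arrangements; you explicitly defer this. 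The fallback via shifting operators would work, but it is a different proof that you have not carried out either (one must verify that compressions preserve the intersecting property and the size, and that a compressed extremal family is a star --- the latter is itself a nontrivial argument). As it stands, the equality case is a gap, not a proof.
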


For general $t$, the theorem looks as follows.

\begin{satz}\label{thm_ekr2}
  Let $n \geq 2k$. Let $t \in \{ 0, \ldots, k\}$.
  Let $Y$ be a set of $k$-element subsets of $\{1, \ldots, n\}$ such that $|K\cap K'|\ge t$ for all $K,K'\in Y$. Then
  \begin{align*}
   |Y| \leq \max \left\{ \binom{n-t}{k-t}, \binom{2k - t}{k} \right\}.
  \end{align*}
\end{satz}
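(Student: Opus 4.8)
The plan is to sandwich $|Y|$ between an achievable lower bound coming from two explicit constructions and an upper bound supplied by the Complete Intersection Theorem. The boundary cases are immediate: for $t=0$ every family is $0$-intersecting and $|Y|\le\binom{n}{k}=\max\{\binom{n}{k},\binom{2k}{k}\}$, while for $t=k$ the condition forces $K=K'$ and $|Y|\le 1$. So assume $1\le t\le k-1$. First I would record that both quantities in the maximum are realized by genuine $t$-intersecting families. Fixing a $t$-set $T$, the \emph{$t$-star} $\{K\in\binom{[n]}{k}:T\subseteq K\}$ has $\binom{n-t}{k-t}$ members, any two of which meet in $T$. Fixing a $(2k-t)$-set $U$, the family $\{K\in\binom{[n]}{k}:K\subseteq U\}$ has $\binom{2k-t}{k}$ members, and for two such $k$-sets one gets $|K\cap K'|=|K|+|K'|-|K\cup K'|\ge 2k-|U|=t$ by inclusion--exclusion. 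Hence both binomials are attained, and it suffices to prove the upper bound.

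For the upper bound I would invoke the Complete Intersection Theorem of Ahlswede and Khachatrian: every $t$-intersecting family of $k$-subsets of $[n]$ satisfies $|Y|\le\max_{0\le i\le k-t}|\mathcal F_i|$, where
\begin{align*}
\mathcal F_i=\Bigl\{K\in\binom{[n]}{k}:|K\cap[t+2i]|\ge t+i\Bigr\}.
\end{align*}
The two constructions above are precisely $\mathcal F_0$ (the $t$-star) and $\mathcal F_{k-t}$ (all $k$-subsets of a $(2k-t)$-set), so the theorem reduces to showing that for $n\ge 2k$ the maximum of $|\mathcal F_i|$ over $i$ is carried by one of the extreme indices $i=0$ or $i=k-t$. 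Writing $|\mathcal F_i|=\sum_{j\ge t+i}\binom{t+2i}{j}\binom{n-t-2i}{k-j}$, I would study the sign of the consecutive difference $|\mathcal F_{i+1}|-|\mathcal F_i|$ as a function of $n$ to locate the maximizing index and the crossover between the ``star'' regime (large $n$) and the ``full subset'' regime (near $n=2k$).

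If one wished to avoid citing the Complete Intersection Theorem, the self-contained route is the shifting method: the standard compression operators $S_{rs}$ preserve both the $t$-intersecting property and the cardinality of $Y$, so one may assume $Y$ is left-compressed, and then argue via the generating-set (``pushing--pulling'') analysis that an extremal left-compressed family must be one of the $\mathcal F_i$. On either route the main obstacle is the final comparison of cardinalities. The functions $i\mapsto|\mathcal F_i|$ are unimodal but not monotone, and their crossover points depend sensitively on $n$; controlling the intermediate families $\mathcal F_i$ with $0<i<k-t$ and pinning down exactly when the maximum collapses onto one of the two displayed binomial coefficients is the delicate step. This threshold analysis, rather than the achievability of the bound or the structural classification of the extremal families, is where I expect the real work to lie.
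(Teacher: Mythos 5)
The paper offers no proof of this statement at all: it is quoted as background and attributed to Wilson \cite{wilson_ekr_1984} and Ahlswede--Khachatrian \cite{ahlswede_ekr_1997}, so there is no in-paper argument to compare yours against. Your reduction to the Complete Intersection Theorem is the natural framework, but the step you defer to the end --- that for $n\ge 2k$ the maximum of $|\mathcal{F}_i|$ is attained at one of the extreme indices $i=0$ or $i=k-t$ --- is not merely delicate, it is false; and consequently the statement as printed in the paper is false as well. Take $n=8$, $k=4$, $t=2$. The family $\mathcal{F}_1=\{K\in\binom{[8]}{4}:|K\cap[4]|\ge 3\}$ is $2$-intersecting (any two members meet $[4]$ in at least $3+3-4=2$ common elements) and has $\binom{4}{3}\binom{4}{1}+1=17$ members, whereas $\binom{n-t}{k-t}=\binom{6}{2}=15$ and $\binom{2k-t}{k}=\binom{6}{4}=15$. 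The same phenomenon occurs throughout the window $2k\le n<(t+1)(k-t+1)$; for instance $(n,k,t)=(11,5,3)$ gives $|\mathcal{F}_1|=\binom{5}{4}\binom{6}{1}+1=31$ against a claimed bound of $\max\{\binom{8}{2},\binom{7}{5}\}=28$. Wilson's theorem yields $|Y|\le\binom{n-t}{k-t}$ only under the stronger hypothesis $n\ge(t+1)(k-t+1)$; below that threshold the intermediate families $\mathcal{F}_i$ with $0<i<k-t$ can strictly dominate both displayed binomials, and the correct bound for all $n$ is $\max_{0\le i\le k-t}|\mathcal{F}_i|$ with no further collapse available.

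So the gap in your proposal is unfixable on the stated hypotheses: no threshold analysis will push the maximum onto the extreme indices in that window, because it simply does not sit there. What survives of your write-up is the achievability discussion (both binomials are indeed realized by $t$-intersecting families, namely $\mathcal{F}_0$ and $\mathcal{F}_{k-t}$) and the observation that for $n\ge(t+1)(k-t+1)$ the plan does work and reduces to Wilson's theorem --- which one should then cite rather than reprove, since its proof is a substantial linear-algebraic argument and not a short computation. Since Theorem \ref{thm_ekr2} is pure background and is never invoked in the body of the paper, the error has no downstream consequences, but the statement should be repaired, either by strengthening the hypothesis to $n\ge(t+1)(k-t+1)$ and keeping only the first term, or by replacing the right-hand side with $\max_{0\le i\le k-t}|\mathcal{F}_i|$ as in the Ahlswede--Khachatrian theorem.
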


These tight upper bounds for all $t$ on EKR sets of sets were given by Wilson in 1984 \cite{wilson_ekr_1984}.
The classification of all examples of maximum size was completed by Ahlswede and Khachatrian in 1997 \cite{ahlswede_ekr_1997}.
Many generalizations of the EKR problem exist for general $t$.
For example for vector spaces \cite{MR867648,MR0382015,MR2231096} and permutation groups \cite{MR2423345}.

With one exception, no attempts were made until now to investigate EKR sets of finite classical polar spaces in the general case.
This exception is the investigation of $\{0,1,2\}$-cliques of dual polar graphs by Brouwer and Hemmeter \cite{MR1158800}, where they classified all $\{0,1,2\}$-cliques on dual polar graphs in the non-Hermitian cases.
This problem was modified by De Boeck \cite{maarten_ekr_planes} to EKR sets, where he classified EKR sets $Y$ of planes (not necessarily generators) for $d \in \{3, 4, 5\}$ and $|Y| \geq 3q^4 + 3q^3 + 2q^2 + q + 1$. Here $q$ is the {\it order} of the polar space, that is the order of its underlying field.
For the more restricted problem in sense of Theorem \ref{thm_ekr} Stanton proved upper bounds in \cite{MR578319}.
The largest examples were mostly classified by Pepe, Storme, and Vanhove in \cite{MR2755082}. For the remaining open case see \cite{Ihringer_Metsch,Metsch}

Define a \textit{$(d, t)$-EKR set} of generators of a polar space of rank $d$ to be a set $Y$ of generators of the polar space such that $y_1, y_2 \in Y$ implies $\codim(y_1 \cap y_2) := d - \dim(y_1 \cap y_2) \leq t$. In this notation Brouwer and Hemmeter investigated $(d, 2)$-EKR sets of finite classical polar spaces.
This paper is concerned about generalizing their work to $(d, t)$-EKR sets ($\{0, 1,\ldots, t\}$-cliques) of maximum size for more values of $t$.
We provide sharp upper bounds for $(d, t)$-EKR sets for $t \leq \sqrt{8d/5}-2$ if $q \geq 3$ and for $t \leq \sqrt{8d/9}-2$ if $q=2$ (Theorem \ref{thm_max_t_even}, Theorem \ref{thm_max_t_odd}, and Theorem \ref{thm_apprx}).
These results imply upper bounds on the size of the second largest example, so they might provide a reasonable basis
to classify the second largest maximal $(d, t)$-EKR sets as it was done for EKR sets of sets \cite{MR0219428}, vector spaces \cite{MR2651724}, and
some special cases in polar spaces \cite{maarten_ekr_hyperbolic, maarten_ekr_planes}.
Furthermore, we give non-trivial upper bounds for general $t$, $q \geq 3$ (Theorem \ref{kor_ev_upper_bnd}).
As a side effect we determine the smallest, largest, and second largest eigenvalues of the adjacency matrix of the considered associated graph for $q \geq 3$ in
Theorem \ref{thm_formula_ev}. These numbers alone are important parameters of a graph
as they can be used easily to make non-trivial statements on many other properties of the graph such as the chromatic number or the convergence of random walks.
Also noteworthy are the inequalities on the Gaussian coefficients and the number of generators of a polar space given in Section \ref{sec_ests}
which are more accurate than the usual approximations, but still so simple that they can be used easily for other combinatorial problems in
polar spaces and vector spaces.

Our main result is the following.

\begin{satz}\label{thm_apprx}
Let $q$ be a prime power and let $t$ and $d$ be non-negative integers satisfying
\begin{enumerate}[(i)]
  \item $0\le t \leq \sqrt{\frac{8d}{9}} - 2$, if $q=2$,
  \item $0\le t \leq \sqrt{\frac{8d}{5}} - 2$, if $q \geq 3$.
\end{enumerate}
Then all finite classical polar spaces of rank $d$ and order $q$ satisfy the following.
\begin{enumerate}[(a)]
    \item If $t$ is even, then the largest $(d, t)$-EKR set is (up to isomorphism) the set of all generators that meet a fixed $d$-space in a subspace of dimension at least $d-\frac{t}{2}$
    \item If $t$ is odd, then the largest $(d, t)$-EKR set is (up to isomorphism) the set of all generators that meet a fixed $(d-1)$-space in a subspace of dimension at least $d-\frac{t}{2} - \frac{1}{2}$.
    \end{enumerate}
\end{satz}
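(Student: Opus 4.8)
The plan is to recast the problem spectrally and then to sharpen the resulting eigenvalue bound by a stability analysis. First I would observe that a set $Y$ of generators is a $(d,t)$-EKR set precisely when it is a coclique (independent set) in the graph $\Gamma_t$ on the generators in which two generators are adjacent if and only if they lie at distance at least $t+1$ in the dual polar graph, that is, $\codim(y_1\cap y_2)\ge t+1$. Since $\Gamma_t=A_{t+1}+\cdots+A_d$ lies in the Bose--Mesner algebra of the dual polar association scheme, it is regular, and its spectrum is obtained from the eigenvalues of the distance matrices $A_i$, which are explicitly available via byproduct (b). I would then record the two target configurations as cocliques and verify the lower bounds by a one-line dimension count: for even $t$ the ball $\{y:\codim(y\cap\pi)\le \tfrac t2\}$ about a fixed generator $\pi$, of size $M=\sum_{i=0}^{t/2}k_i$ where $k_i$ is the valency of the distance-$i$ relation, and for odd $t$ the analogous set $\{y:\dim(y\cap\sigma)\ge d-\tfrac{t+1}{2}\}$ about a fixed $(d-1)$-space $\sigma$; in each case the submodularity of dimension forces $\codim(y_1\cap y_2)\le t$.

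Second, I would apply the Hoffman (ratio) bound to $\Gamma_t$. Writing the characteristic vector $x=\mathbf 1_Y$ in the eigenbasis as $x=\sum_{j=0}^d x_j$ with $x_0=\frac{|Y|}{n}\mathbf 1$, the coclique condition $x^{\mathsf T}\Gamma_t x=0$ gives the identity $\lambda_0\frac{|Y|^2}{n}+\sum_{j\ge 1}\lambda_j\|x_j\|^2=0$, and replacing each $\lambda_j$ by $\tau:=\min_{j\ge 1}\lambda_j(\Gamma_t)$ yields the usual estimate. The decisive analytic step is to locate $\tau$: using the explicit eigenvalue formulas together with the Gaussian-coefficient inequalities of Section \ref{sec_ests}, I would show that in the stated range of $t$ the function $j\mapsto\lambda_j(\Gamma_t)$ attains its minimum over $j\ge 1$ at the eigenspace predicted by the parity of $t$. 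This is exactly where the hypotheses (i) and (ii) enter, since the inequality forcing the minimizing eigenspace to be the correct one is quadratic in $t$ against $d$, which is the origin of the $\sqrt{8d/5}$ and $\sqrt{8d/9}$ thresholds.

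The main obstacle is that, already for $t\ge 1$, the ratio bound is not tight: the characteristic vector of a ball is supported on essentially all eigenspaces, so some $\lambda_j$ with $\lambda_j>\tau$ contributes, the displayed identity is strict, and Hoffman's bound strictly exceeds $M$. Consequently the ratio bound alone cannot deliver the exact value, and I would upgrade it to a stability statement. Keeping the full identity rather than collapsing it to a single eigenvalue, I would use the refined estimates of Section \ref{sec_ests} to bound from above the contributions of the ``wrong'' eigenspaces and to show that any coclique with $|Y|>M$ would have to place too much mass in eigenspaces whose eigenvalues lie strictly below $\tau$, contradicting the identity once $t$ is small relative to $d$. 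This would simultaneously sharpen the bound to $|Y|\le M$ and pin down the eigenspace profile of an extremal $Y$.

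Finally, for the classification I would translate this spectral profile into geometry. From the fact that an extremal $x$ concentrates on $V_0$ together with the distinguished eigenspace, I would extract an equitable-partition-type regularity condition controlling how the generators outside $Y$ meet the members of $Y$, and then argue, separately in the even and odd cases, that $Y$ consists of exactly the generators meeting a fixed generator, respectively a fixed $(d-1)$-space, in the prescribed dimension. I expect the two hardest points to be this passage from the spectral description to the precise geometric one, and the verification that the threshold on $t$ is exactly what the stability estimate requires; the even and odd cases should run in parallel, differing only in the fixed subspace and the distinguished eigenspace dictated by the parity of $t$.
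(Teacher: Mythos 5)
Your overall framing (coclique in $\Gamma_t=A_{t+1}+\cdots+A_d$, explicit eigenvalues, ratio bound) matches the paper's Sections \ref{sec_assoc} and \ref{sec_hoffman}, but the core of your plan --- upgrading the Hoffman identity to a stability statement strong enough to force $|Y|\le M$ and to pin down the extremal sets --- cannot work, and the reason is quantitative, not technical. The ratio bound for $\Gamma_t$ gives $c_{d,t}\lesssim -\lambda_{\min}\approx q^{t(d-t-1)+\binom{t}{2}+t\epsilon}$ (Theorem \ref{thm_hoffman_explicit}), whereas the conjectured extremal sets have size only about $q^{\frac t2(d-\frac t2)+\epsilon\frac t2+\binom{t/2}{2}}$ (Lemma \ref{apprx_ex}); for fixed $t$ and large $d$ the exponents are $\sim td$ versus $\sim td/2$, so Hoffman's bound overshoots by a multiplicative factor of order $q^{td/2}$. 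No refinement of the eigenspace identity of the kind you describe (bounding the mass in the ``wrong'' eigenspaces) closes a gap of this size: stability versions of the ratio bound only give structural information when the coclique is within a bounded factor of the spectral bound, and here it is exponentially far below it. The paper makes exactly this point in its Conclusions (``for nearly all $(d,t)$-EKR sets Hoffman's bound is far larger than the largest known examples,'' and even the LP bound does not help), which is why the authors ``had to rely explicitly on the geometrical properties of polar spaces.'' A secondary error: the thresholds $\sqrt{8d/5}-2$ and $\sqrt{8d/9}-2$ do not arise from locating the minimizing eigenspace --- Theorem \ref{thm_smallest_ev} identifies $\lambda_{\min}$ for all $a\le d-1$ and $q\ge 3$ with no constraint tying $t$ to $d$.

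What the paper actually does is geometric. Theorems \ref{thm_max_t_even} and \ref{thm_max_t_odd} show, by dimension counts on triple intersections $a_1\cap a_2\cap a_3$ and the spans $U=\langle a_1\cap a_2,a_1\cap a_3,a_2\cap a_3\rangle$, that any maximal $(d,t)$-EKR set which is \emph{not} the canonical example has size at most $b_1^\even+b_2^\even$ (even $t$), respectively $2b_1^\odd+b_2^\odd+b_3^\odd$ (odd $t$). The spectral machinery enters only in a subordinate role: Hoffman's bound is applied to the \emph{small} polar space of rank $2t-1$ (resp.\ $2t-2$) to control the factor $c_{2t-1,t}$ (resp.\ $c_{2t-2,t}$) appearing in $b_1$. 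The parity-dependent thresholds on $t$ then come from requiring that the $q$-degree of the example size $y^\even$ (resp.\ $y^\odd$) exceed the degrees of all the $b_i$ by a fixed margin (the quantities $\delta_i$ in Section \ref{sec_felina}), which is where the quadratic inequality $\tfrac58 t^2+O(t)\le d$ and hence $t\lesssim\sqrt{8d/5}$ originates. If you want to salvage a spectral route you would need an algebraic bound that is tight to within a constant factor of the true maximum, which, as the paper notes, is not available even via linear programming.
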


Beyond these results we hope that the used technique, which combines algebraic and geometrical arguments, is applicable to EKR problems in other interesting structures, and can be modified to classify all $(d, t)$-EKR sets of generators of maximum size for more values of $d$ and $t$.

This paper is organized as follows.
The main parts are Section \ref{sec_t_even} and Section \ref{sec_t_odd}, where we develop stability results for $(d, t)$-EKR sets (Theorem \ref{thm_max_t_even} and Theorem \ref{thm_max_t_odd}) which depend on the maximum size of $(2t-1, t)$- and $(2t-2, t)$-EKR sets.
In Section \ref{sec_ests} we calculate some inequalities on Gaussian coefficients and the number of generators of polar spaces.
We use these to approximate Hoffman's bound for $(d, t)$-EKR sets in Section \ref{sec_hoffman} for $q \geq 3$.
Finally, in Section \ref{sec_felina} we prove Theorem \ref{thm_apprx}.
The other sections are devoted to definitions and tedious, but necessary calculations.

{\bf Remark}. In order to increase the readability of the paper, we omit the proof for the case when $q=2$, since this case requires different estimations. A proof can be found in the Ph.D. thesis of the first author \cite{ihringer_phd}.

\section{Combinatorial Properties of Polar Spaces}\label{sec_def}

Finite classical polar spaces consist of the totally isotropic or totally singular subspaces of a non-degenerate sesquilinear, respectively, quadratic form on a vector space partially ordered by inclusion. For details we refer to standard references such as \cite{hirschfeld1991general}. The maximal totally isotropic (or totally singular) subspaces of a finite classical polar space are called its \textit{generators}. If the (vector space) dimension of a generator is larger than $3$, then all finite polar spaces are \textit{classical}. The dimension of all generators of a polar space is the same and this dimension is called the \textit{rank} of a polar space.

For any prime power $q$, there exist the following types of polar spaces of rank $d$ and order $q$:
\begin{enumerate}[(a)]
 \item The hyperbolic quadric $Q^+(2d-1, q)$.
  Up to coordinate transformation it is defined by the quadratic form $f(x)=x_0x_1 + \ldots + x_{2d-2}x_{2d-1}$.
 \item The parabolic quadric $Q(2d, q)$.
  Up to coordinate transformation it is defined by the quadratic form $f(x)=x_0^2 + x_1x_2 + \ldots + x_{2d-1}x_{2d}$.
 \item The elliptic quadric $Q^-(2d+1, q)$.
  Up to coordinate transformation it is defined by the quadratic form $f(x)=h(x_0, x_1) + x_2x_3 + \ldots + x_{2d}x_{2d+1}$, where $h(x_0, x_1)$ is an irreducible homogenous quadratic polynomial over $\bbF_q$.
 \item The Hermitian polar space $H(2d-1, q)$ when $q=r^2$ is a square.
  Its standard sesquilinear form is $f(x,y)=x_0y_0^r+\ldots+x_{2d-1}y_{2d-1}^r$.
 \item The Hermitian polar space $H(2d, q)$ when $q=r^2$ is a square.
  Its standard sesquilinear form is $f(x,y)=x_0y_0^r+\ldots+x_{2d}y_{2d}^r$.
 \item The symplectic polar space $W(2d-1, q)$.
  Its standard sesquilinear form is $f(x,y) = x_0y_1 - x_1y_0 + \ldots + x_{2d-2} y_{2d-1} - x_{2d-1} y_{2d-2}$.
\end{enumerate}

Set $\epsilon = 0, \frac12, 1, 1, \frac32, 2$ for $Q^+(2d-1, q)$, $H(2d-1, q)$, $Q(2d, q)$, $W(2d-1, q)$, $H(2d, q)$, respectively, $Q^-(2d+1, q)$.
A polar space $\scrP$ of rank $d$, order $q$, and type $e$ has exactly
\begin{align}
  \prod_{i=0}^{d-1} (q^{i+\epsilon} + 1)\label{number_gens}
\end{align}
generators, see Appendix VI in \cite{hirschfeld1991general}.

\begin{Remark}
  We shall use the following conventions.
  \begin{enumerate}[(a)]
   \item Unless otherwise mentioned, we are always using vector space dimension and never projective dimension.
    This increases the readability of all the used eigenvalue formulas.
   \item Whenever we say \textit{totally isotropic}, then we mean \textit{totally singular} if the considered polar space is
    a quadric.
   \item The parameters $\epsilon$ and $q$ are always fixed.
   \item Sometimes we write $k$-space, $k$-subspace, $k$-dimensional subspace for a subspace of dimension $k$.
  \end{enumerate}
\end{Remark}

For integers $n$ and $k$ define the \emph{Gaussian coefficient} $\gauss{n}{k}_q$ by
\begin{align*}
  \gauss{n}{k}_q = \begin{cases}
		   \prod_{i=1}^{k} \frac{q^{n-i+1}-1}{q^i-1} & \text{ if } 0 \leq k \leq n,\\
                    0 & \text{ otherwise.}
                  \end{cases}
\end{align*}
We write $\gauss{n}{k}$ instead of $\gauss{n}{k}_q$ when $q$ is clear from the context.
It is well-known that the number of $k$-dimensional subspaces of a vector space of dimension $n$ equals $\gauss{n}{k}$.
In particular, an $n$-dimensional vector space has exactly
\begin{align}
  \gauss{n}{n-k} = \gauss{n}{k} \label{gauss_duality}
\end{align}
subspaces of dimension $k$. A straightforward calculation shows
\begin{align}
 \gauss{n+1}{k+1} = q^{n-k} \gauss{n}{k} + \gauss{n}{k+1}\label{gauss_rec}
\end{align}
for integers $k$ and $n \geq 0$.

\begin{lemma}\label{gauss_alt_sum}\label{identity_0}
  Let $n \geq 1$ and $a \geq 0$. Then,
  \begin{align*}
   \sum_{k=0}^{a} (-1)^{n-k} \gauss{n}{k} q^{\binom{n-k}{2}} = (-1)^{n+a} \gauss{n-1}{a} q^{\binom{n-a}{2}}.
  \end{align*}
\end{lemma}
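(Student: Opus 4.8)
The plan is to prove the identity by induction on $a$, keeping $n$ fixed. Write $S_a$ for the left-hand side, so that for $a\ge 1$ we have $S_a = S_{a-1} + (-1)^{n-a}\gauss{n}{a}q^{\binom{n-a}{2}}$, the only new ingredient being the single term of index $k=a$.

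For the base case $a=0$ the sum reduces to the single term $(-1)^n \gauss{n}{0} q^{\binom{n}{2}} = (-1)^n q^{\binom{n}{2}}$, which agrees with the right-hand side $(-1)^n \gauss{n-1}{0} q^{\binom{n}{2}}$ since both Gaussian coefficients equal $1$. For the inductive step I would assume the formula for $a-1$, namely $S_{a-1} = (-1)^{n+a-1}\gauss{n-1}{a-1}q^{\binom{n-a+1}{2}}$, add the new term, and observe that $(-1)^{n-a} = (-1)^{n+a}$. After dividing out the common sign $(-1)^{n+a}$, the whole claim collapses to verifying
\begin{align*}
  \left(\gauss{n}{a} - \gauss{n-1}{a}\right)q^{\binom{n-a}{2}} = \gauss{n-1}{a-1}q^{\binom{n-a+1}{2}}.
\end{align*}

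To finish this I would invoke the recurrence \eqref{gauss_rec} in the form $\gauss{n}{a} = q^{n-a}\gauss{n-1}{a-1} + \gauss{n-1}{a}$, obtained from it by the substitution $n\mapsto n-1$, $k\mapsto a-1$. This turns the bracketed difference on the left into $q^{n-a}\gauss{n-1}{a-1}$, so the left-hand side becomes $\gauss{n-1}{a-1}q^{\,(n-a)+\binom{n-a}{2}}$. The identity then reduces to the elementary exponent fact $\binom{n-a}{2} + (n-a) = \binom{n-a+1}{2}$, which holds because $\binom{m}{2}+m=\binom{m+1}{2}$ with $m=n-a$.

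None of these steps is genuinely hard; the only delicate point is the bookkeeping of the signs $(-1)^{n\pm a}$ together with the exponents of $q$, ensuring that the shift from $\binom{n-a}{2}$ to $\binom{n-a+1}{2}$ exactly absorbs the factor $q^{n-a}$ produced by the recurrence. I expect this sign-and-exponent matching to be the only place where an error could slip in.
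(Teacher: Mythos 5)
Your proof is correct, and it is essentially the same argument as the paper's: the paper applies the recurrence \eqref{gauss_rec} inside the sum and telescopes, which is exactly your induction on $a$ unrolled, using the same key cancellation $q^{n-a}\,q^{\binom{n-a}{2}}=q^{\binom{n-a+1}{2}}$. No issues.
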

\begin{proof}
  \begin{align*}
   &\sum_{k=0}^{a} (-1)^{n-k} q^{\binom{n-k}{2}} \gauss{n}{k} \\
   \stackrel{\eqref{gauss_rec}}{=} &\sum_{k=0}^{a} (-1)^{n-k} q^{\binom{n-k}{2}} \left( q^{n-k} \gauss{n-1}{k-1} + \gauss{n-1}{k} \right)\\
   = &\sum_{k=0}^{a} (-1)^{n-k} \left(  q^{\binom{n-k+1}{2}} \gauss{n-1}{k-1} +  q^{\binom{n-k}{2}} \gauss{n-1}{k} \right)\\
   = &(-1)^{n-a} q^{\binom{n-a}{2}} \gauss{n-1}{a}.
  \end{align*}
\end{proof}

\begin{defi}
  For integers $d,r, s, u, z, z_1, z_2, t$ with $d\ge 0$ let
  \begin{enumerate}[(a)]
   \item $\psi_{12}(d, r, s, u)$ be the number of $r$-spaces in $\bbF_q^d$ meeting a fixed $s$-space in a fixed $u$-space, if $0 \leq u \leq s \leq d$, and $0$ otherwise.
   \item $\psi_{2}(d, r, s, u)$ be the number of $r$-spaces in $\bbF_q^d$ meeting a fixed $s$-space in some $u$-space, if $0 \leq s \leq d$, and $0$ otherwise.
   \item $\psi_3(d, x, y, z, z_1, z_2)$ be the number of $z$-spaces in $\bbF_q^d$ that meet a fixed $x$-space $X$ in some $z_2$-space and a fixed $y$-space $Y \subseteq X$ in some $z_1$-space, if $0 \leq y \leq x \leq d$, and $0$ otherwise.
   \item
   \begin{align*}
    \psi^\even=\sum_{i=1}^{t/2-1} \psi_3(d, d - \frac{3}{2} t, d - 2t + 1, d - \frac{t}{2}, d - \frac{5}{2}t + 1+i, d-2t)
   \end{align*}
   when $t$ is even.
   \item
   \begin{align*}
      \psi^\odd=\sum_{i=1}^{t/2-3/2} \psi_3(d-1, d - \frac{3}{2}t + \frac{1}{2}, d-2 t + 2, d-\frac{1}{2}t - \frac{1}{2}, d - \frac{5}{2} t+ \frac{5}{2} +i, d-2t+1)
   \end{align*}
   when $t$ is odd.
   \item $\overline{\psi}^\odd$ be $\psi_2(d, d-\frac{t}{2}+\frac{1}{2}, d - \frac{3}{2}t+\frac{1}{2}, d-2t+1)$ for $t$ odd.
   \item $\omega(d, r)$ be the number of generators that contain a fixed $(d-r)$-space in a polar space of rank $d$ if
   $0 \leq r \leq d$, and $0$ otherwise.
   \item $c_{d, t}$ be the maximum size of a $(d, t)$-EKR set of generators of a finite classical polar space of rank $d$.
  \end{enumerate}
\end{defi}

\begin{lemma}~\label{lem_basic_cnt_pgnq}\label{kor_basic_cnt_app1}\label{lem_upperbound_arbP_t_even_spec}\label{kor_upperbound_arbP_t_even}\label{kor_upperbound_arbP_t_odd}
  \begin{enumerate}[\rm (a)]
  \item
    \begin{align*}
      \psi^\even = q^{\frac{3}{4} t^2} \sum_{i=1}^{t/2-1} q^{(\frac{t}{2}-1-i)(\frac{t}{2}-i)} \gauss{d-2t+1}{t/2 - i} \gauss{t/2-1}{i},
    \end{align*}
  \item
    \begin{align*}
      \psi^\odd =  q^{(\frac{t}{2}-\frac{1}{2})(\frac{3}{2}t-\frac{3}{2})} \sum_{i=1}^{t/2-3/2} q^{(\frac{t}{2}-\frac{3}{2}-i)(\frac{t}{2}-\frac{1}{2}-i)}
      \gauss{d-2t+2}{(t-1)/2-i} \gauss{\frac{t}{2} - \frac{3}{2}}{i},
    \end{align*}
  \item
    \begin{align*}
      \overline{\psi}^\odd = q^{(\frac{3}{2}t - \frac{1}{2})(\frac{t}{2} - \frac{1}{2})} \gauss{d - \frac{3}{2}t + \frac{1}{2}}{(t-1)/2},
    \end{align*}
    \item
    \begin{align*}
      \omega(d, r) = \prod_{i=0}^{r-1} (q^{i + \epsilon} + 1).
    \end{align*}
  \end{enumerate}
\end{lemma}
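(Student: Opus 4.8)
The plan is to reduce all four parts to two elementary subspace-counting formulas and then substitute the specific parameters from the definition. First I would record the auxiliary counts. The number of $m$-spaces of $\bbF_q^n$ that meet a fixed $\ell$-space trivially is $q^{m\ell}\gauss{n-\ell}{m}$. Passing to the quotient by a fixed $u$-space $U$ of the $s$-space $S$ turns this into a count of $(r-u)$-spaces of $\bbF_q^{d-u}$ meeting $S/U$ trivially, so $\psi_{12}(d,r,s,u)=q^{(r-u)(s-u)}\gauss{d-s}{r-u}$; summing over the $\gauss{s}{u}$ choices of $U=R\cap S$ gives
\[
\psi_2(d,r,s,u)=\gauss{s}{u}\, q^{(r-u)(s-u)}\gauss{d-s}{r-u}.
\]
For $\psi_3$, the containment $Y\subseteq X$ forces $Z\cap Y=(Z\cap X)\cap Y$, so I would build $Z$ in two steps: first choose its trace $W:=Z\cap X$, a $z_2$-space of $X$ meeting $Y$ in a $z_1$-space (there are $\psi_2(x,z_2,y,z_1)$ of these), then extend $W$ to a $z$-space $Z$ of $\bbF_q^d$ with $Z\cap X=W$ (there are $\psi_{12}(d,z,x,z_2)$ of these). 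Multiplying,
\[
\psi_3(d,x,y,z,z_1,z_2)=\gauss{y}{z_1}\gauss{x-y}{z_2-z_1}\gauss{d-x}{z-z_2}\, q^{(y-z_1)(z_2-z_1)+(x-z_2)(z-z_2)}.
\]

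Part (d) is independent of these and immediate. Fixing a totally isotropic $(d-r)$-space $W$, the map $U\mapsto U/W$ is a bijection between the generators of the polar space through $W$ and the generators of the polar space $W^\perp/W$, which has rank $r$ and the same type $\epsilon$; hence \eqref{number_gens} gives $\omega(d,r)=\prod_{i=0}^{r-1}(q^{i+\epsilon}+1)$.

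For (a)--(c) I would substitute the parameters of the definition into the two closed forms and simplify using the duality \eqref{gauss_duality}. In (a) the parameters give $d-x=z-z_2=\tfrac{3}{2}t$, so $\gauss{d-x}{z-z_2}=1$; the surviving coefficients become $\gauss{y}{z_1}=\gauss{d-2t+1}{t/2-i}$ and $\gauss{x-y}{z_2-z_1}=\gauss{t/2-1}{i}$, while the exponent splits as $(x-z_2)(z-z_2)=\tfrac{t}{2}\cdot\tfrac{3}{2}t=\tfrac{3}{4}t^2$ plus $(y-z_1)(z_2-z_1)=(\tfrac{t}{2}-i)(\tfrac{t}{2}-1-i)$, which is exactly (a). Part (b) is the same computation with the odd parameters, where now $\gauss{d-x}{z-z_2}=\gauss{\frac{3}{2}t-\frac{3}{2}}{\frac{3}{2}t-\frac{3}{2}}=1$ and $(x-z_2)(z-z_2)=(\tfrac{t}{2}-\tfrac{1}{2})(\tfrac{3}{2}t-\tfrac{3}{2})$ supplies the prefactor. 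Part (c) is a single application of the $\psi_2$ formula with $u=d-2t+1$: there $\gauss{d-s}{r-u}=\gauss{\frac{3}{2}t-\frac{1}{2}}{\frac{3}{2}t-\frac{1}{2}}=1$, $\gauss{s}{u}=\gauss{d-\frac{3}{2}t+\frac{1}{2}}{(t-1)/2}$, and $(s-u)(r-u)=(\tfrac{t}{2}-\tfrac{1}{2})(\tfrac{3}{2}t-\tfrac{1}{2})$.

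The only genuine care is bookkeeping. Since the sums defining $\psi^\even$ and $\psi^\odd$ occur only for even, respectively odd, $t$, all of $\tfrac{t}{2}$, $\tfrac{3}{2}t$, $\tfrac{t-1}{2}$, $\tfrac{t-3}{2}$ are honest non-negative integers in the relevant ranges, and one must track which differences are split off. I expect the main obstacle to be establishing the $\psi_3$ formula rigorously --- in particular checking that the two-step flag count is a genuine bijection with no over- or undercounting --- after which the matching in (a)--(c) reduces to the exponent arithmetic above, the Gaussian-coefficient part being immediate from \eqref{gauss_duality}.
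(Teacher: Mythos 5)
Your proposal is correct and follows essentially the same route as the paper: it computes $\psi_3$ as the product $\psi_2(x,z_2,y,z_1)\,\psi_{12}(d,z,x,z_2)$ via the same two-step choice of the trace $Z\cap X$ followed by its extension, obtains $\omega(d,r)$ from the quotient geometry and \eqref{number_gens}, and then substitutes the parameters using \eqref{gauss_duality}. The only (harmless) difference is that you derive the formulas for $\psi_{12}$ and $\psi_2$ from scratch, where the paper simply cites Hirschfeld.
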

\begin{proof}
  First we calculate $\psi_3(d, x, y, z, z_1, z_2)$ for $0\le y\le x\le d$.
  For this consider an $x$-dimensional subspace $X$ of $\bbF_q^d$ and a $y$-dimensional subspace $Y$ with $Y \subseteq X$.
  We want to choose a $z$-dimensional subspace $Z$, where
   \begin{align*}
    \dim(Z \cap Y) = z_1 \text{ and }
    \dim(Z \cap X) = z_2.
   \end{align*}
    The number of ways choosing $Z \cap X$ is $\psi_{2}(x, z_2, y, z_1)$.
    Then the number of ways choosing $Z$ through a fixed subspace $Z \cap X$ is $\psi_{12}(d, z, x, z_2)$.
    Hence, $\psi_3(d, x, y, z, z_1, z_2) = \psi_{2}(x, z_2, y, z_1) \psi_{12}(d, z, x, z_2)$.
    By \cite[Th. 3.3, (1), p. 88]{hirschfeld1998projective},
  \begin{align*}
    \psi_{12}(d, r, s, u) = q^{(r-u)(s-u)} \gauss{d-s}{r-u}.
  \end{align*}
  By \cite[Th. 3.3, (2), p. 88]{hirschfeld1998projective},
  \begin{align*}
     \psi_{2}(d, r, s, u) = \gauss{s}{u}\psi_{12}(d, r, s, u).
  \end{align*}
  Hence, $\psi_3(d, x, y, z, z_1, z_2) = $
 \begin{align*}
    q^{(z_2-z_1)(y-z_1)+(z-z_2)(x-z_2)} \gauss{y}{z_1}\gauss{x-y}{z_2-z_1} \gauss{d-x}{z-z_2}.
 \end{align*}
    These equations and \eqref{gauss_duality} imply the first three assertions.

   The number $\omega(d, r)$ equals the number of generators in the quotient geometry of a $(d-r)$-space.
      That is a polar space of the same type with generators of rank $r$.
      The claim follows from \eqref{number_gens}.
\end{proof}

\section{A Property of $(d,t)$-EKR Sets}\label{sec_basics}

A $(d, t)$-EKR set is \emph{maximal} if it is not a proper subset of another $(d, t)$-EKR set.
We need the following basic result on maximal $(d, t)$-EKR sets.

\begin{lemma}\label{lem_max_dmtp1_is_not_max_dmt_ekr_set}
For $0\le t\le d$ a $(d,t-1)$-EKR set of a polar space is never a maximal $(d,t)$-EKR set of that polar space.
\end{lemma}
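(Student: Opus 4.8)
The plan is to prove the equivalent statement that every $(d,t-1)$-EKR set $Y$ (which is trivially itself a $(d,t)$-EKR set) can be \emph{properly} enlarged to a $(d,t)$-EKR set by adjoining a single generator $g\notin Y$ that meets some member of $Y$ in codimension $1$. First I would dispose of the degenerate cases: if $t=0$, then a $(d,-1)$-EKR set is empty (no two generators meet in codimension $\le -1$), and the empty set is not maximal because any single generator may be added; likewise an empty $Y$ is never maximal. So I assume $1\le t\le d$ and $Y\neq\emptyset$.

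The heart of the argument is the observation that \emph{any} codimension-$1$ neighbour of a member of $Y$ is automatically compatible with all of $Y$. Concretely, suppose $y_0\in Y$ and $g$ is a generator with $\dim(g\cap y_0)=d-1$, and write $W:=g\cap y_0$, a hyperplane of $y_0$. For an arbitrary $y\in Y$ the $(d,t-1)$-EKR property gives $\dim(y\cap y_0)\ge d-t+1$, and since both $W$ and $y\cap y_0$ lie in the $d$-space $y_0$, I obtain
\begin{align*}
 \dim(g\cap y)\ \ge\ \dim(W\cap y)\ =\ \dim\big(W\cap(y\cap y_0)\big)\ \ge\ (d-1)+(d-t+1)-d\ =\ d-t,
\end{align*}
so $\codim(g\cap y)\le t$. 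Thus $Y\cup\{g\}$ is a $(d,t)$-EKR set whenever $g$ is a codimension-$1$ neighbour of some $y_0\in Y$.

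It remains to produce such a $g$ that does not already lie in $Y$, and this is where I expect the only real subtlety. I would argue by connectivity of the dual polar graph $\Gamma$ (vertices the generators, adjacency being intersection in codimension $1$), a standard distance-regular graph of diameter $d$, hence connected. Since $1\le t\le d$, a pair of opposite generators meets in codimension $d>t-1$, so $Y$ cannot consist of all generators; as $Y$ is also nonempty and $\Gamma$ is connected, there must be an edge of $\Gamma$ leaving $Y$, i.e.\ a $y_0\in Y$ and a generator $g\notin Y$ with $\dim(g\cap y_0)=d-1$. By the previous paragraph this $g$ is compatible with every element of $Y$, so $Y\cup\{g\}$ is a $(d,t)$-EKR set properly containing $Y$, proving $Y$ is not maximal.

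The main obstacle is precisely guaranteeing $g\notin Y$: the compatibility computation forces the use of codimension-$1$ neighbours (a neighbour in codimension $k\ge 2$ only yields $\codim(g\cap y)\le k+t-1$ from the same estimate, which is too weak), yet for $t\ge 3$ a set $Y$ may well contain the entire closed neighbourhood of a fixed vertex, so one cannot commit to a particular $y_0$ in advance. Invoking connectivity of $\Gamma$ circumvents this by letting $y_0$ range over all of $Y$: a boundary edge is guaranteed as soon as $Y$ is a nonempty proper vertex subset. The only external facts I would verify carefully are the two standard ones used here, namely that $\Gamma$ is connected and that opposite generators (meeting in dimension $0$) exist in every finite classical polar space of rank $d$.
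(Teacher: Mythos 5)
Your proof is correct, and it shares with the paper's proof the central estimate: if a generator $g$ meets some $y_0\in Y$ in a hyperplane $W$, then for every $y\in Y$ one has $\dim(g\cap y)\ge\dim\big(W\cap(y\cap y_0)\big)\ge (d-1)+(d-t+1)-d=d-t$, so $g$ is compatible with all of $Y$. Where you genuinely diverge is in producing such a $g$ outside $Y$. The paper runs an induction on $t$: if no two members of $Y$ meet in codimension exactly $t-1$, then $Y$ is a $(d,t-2)$-EKR set and the induction hypothesis applies; otherwise, from a pair $a_1,a_2$ with $\codim(a_1\cap a_2)=t-1$ it chooses a hyperplane $a_1'$ of $a_1$ with $\dim(a_1'\cap a_2)=d-t$ and constructs a generator $c\supseteq a_1'$ with $c\cap a_2=a_1'\cap a_2$, so that $\codim(c\cap a_2)=t$ certifies $c\notin Y$. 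You instead observe that $Y$ is a nonempty proper subset of the vertex set of the connected dual polar graph (proper because $t\le d$ forbids $Y$ from containing a pair of disjoint generators), so a boundary edge exists and its outer endpoint serves as $g$. Your route avoids both the induction and the explicit construction of $c$ (which itself needs a small quotient-geometry argument of the kind used in Example \ref{lem_ex_t_even}), at the price of importing two standard facts --- connectivity of the dual polar graph and the existence of disjoint generators; both are classical, and the latter also follows from Corollary \ref{kor_dis_gens}, so the argument is legitimate and arguably cleaner. Your diagnosis of the real subtlety (guaranteeing $g\notin Y$, since $Y$ may contain the entire closed neighbourhood of any prescribed vertex once $t\ge 3$) is exactly the issue the paper's case distinction is designed to handle.
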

\begin{proof}
  If $t = 0$, then the unique maximal $(d, -1)$-EKR set is the empty set, but every maximal $(d, 0)$-EKR set is the set of one generator.

  Suppose now that $Y$ is a $(d, t-1)$-EKR set with $t>0$.

  \textbf{Case 1.} If there are no $a_1, a_2 \in Y$ such that $\dim(a_1 \cap a_2) = d-t+1$, then
  (by induction on $t$) $Y$ is not a maximal $(d, t-1)$-EKR set, hence also not
  a maximal $(d,t)$-EKR set.

  \textbf{Case 2.} There exist $a_1, a_2 \in Y$ such that $\dim(a_1 \cap a_2) = d-t+1$.
  Take a $(d-1)$-dimensional subspace $a_1'$ of $a_1$ such that $\dim(a_1' \cap a_2) = d-t$. There exists a generator $c$ through $a'_1$ with $c\cap a_2=a'_1\cap a_2$. Then $c\notin Y$.  Since all $b \in Y$ satisfy $\dim(c\cap b)\ge \dim(a_1' \cap b) \geq \dim(a_1 \cap b)-1 = d-t$, $Y\cup \{c\}$ is still a $(d, t)$-EKR set.
\end{proof}

\begin{lemma}\label{lem_d_t_eq_d_1}
  The largest $(d, 1)$-EKR sets of generators consist of all generators on a subspace of dimension $d-1$.
\end{lemma}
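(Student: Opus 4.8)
The plan is to read a $(d,1)$-EKR set $Y$ as a clique in the dual polar graph, that is, a family of generators that pairwise meet in a $(d-1)$-space, and to prove that as soon as $|Y|\ge 2$ the whole family lies on a common line of the polar space, meaning its members share one fixed $(d-1)$-space. Since by Lemma~\ref{lem_basic_cnt_pgnq}(d) the set of all generators through a fixed $(d-1)$-space has size $\omega(d,1)=q^{\epsilon}+1$ and is itself a $(d,1)$-EKR set, this simultaneously determines the maximum size $c_{d,1}=q^{\epsilon}+1$ and shows that the extremal sets are exactly these lines.

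To carry this out I would fix two members $g_1,g_2\in Y$ and put $H:=g_1\cap g_2$, a $(d-1)$-space. For an arbitrary third member $g_3\in Y$ the subspace $A:=g_1\cap g_3$ is again a $(d-1)$-space, and it suffices to show $A=H$, that is $g_3\supseteq H$. Suppose not. Then $\pi:=H\cap A$ has dimension $d-2$, it is totally isotropic (being contained in $g_1$), and it is contained in all three generators, since $\pi\subseteq H\subseteq g_2$ and $\pi\subseteq A\subseteq g_3$.

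Now I would pass to the quotient geometry (residue) of $\pi$, which is a polar space of the same type and of rank $d-\dim\pi=2$, i.e.\ a generalized quadrangle; here the generators through $\pi$ become the lines and the $(d-1)$-spaces through $\pi$ become the points. The three distinct generators $g_1,g_2,g_3$ give three distinct lines, and their pairwise intersections $g_i\cap g_j$ are $(d-1)$-spaces strictly containing $\pi$, hence genuine points of the quadrangle, so the three lines meet pairwise. A generalized quadrangle contains no triangle, so three lines meeting pairwise must pass through one common point, which corresponds to a $(d-1)$-space $K$ with $\pi\subseteq K\subseteq g_1\cap g_2\cap g_3$. Then $K\subseteq H$ and $K\subseteq A$ force $K=H=A$ by equality of dimensions, contradicting $A\neq H$. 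Hence every $g_3\in Y$ contains $H$, so $Y$ lies on the line through $H$ and the lemma follows; the cases $|Y|\le 1$ and $d=1$ are immediate.

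The genuine content, and the only step needing care, is this reduction to the rank-$2$ residue together with the no-triangle property of generalized quadrangles: one must check that the three generators really do contain a common $(d-2)$-space and that their three pairwise intersections descend to \emph{distinct} points in the residue, so that a true triangle of lines is produced and the contradiction is valid. Everything else, namely the count of a line via Lemma~\ref{lem_basic_cnt_pgnq} and the degenerate small cases, is routine.
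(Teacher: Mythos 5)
Your proof is correct, but it takes a genuinely different route from the paper's. The paper argues directly: if $c\in Y$ fails to contain $H=a\cap b$, it picks points $p\in(a\cap c)\setminus H$ and $p'\in(b\cap c)\setminus H$ and observes that $\langle H,p,p'\rangle$ is totally isotropic (the perpendicularities $p\perp H$, $p'\perp H$, $p\perp p'$ coming from $a$, $b$, $c$ respectively) and contains both generators $a$ and $b$, hence has dimension $d+1$ --- an immediate contradiction in two lines. You instead descend to the residue of the $(d-2)$-space $\pi=H\cap A$, identify it as a rank-$2$ polar space, and invoke the no-triangle/concurrency property of generalized quadrangles to force $H=A$. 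Both arguments are sound, and the steps you flag as needing care (that $\pi$ lies in all three generators and that the pairwise intersections give genuine points of the residue) do check out. What the paper's version buys is self-containment and brevity: the GQ concurrency fact you import is itself normally proved by exactly the span-and-perpendicularity computation the paper performs directly, so your proof is in essence the same argument wrapped in residue language, at the cost of an extra layer of machinery. What your version buys is a cleaner conceptual picture (the obstruction is literally a triangle in a generalized quadrangle) and, as a bonus, you make explicit the maximum size $c_{d,1}=q^{\epsilon}+1$ and the easy degenerate cases, which the paper leaves implicit.
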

\begin{proof}
Let $Y$ be a $(d,1)$-EKR set of generators. We may suppose $|Y| > 1$ and consider distinct $a, b \in Y$. Then $a \cap b$ is a subspace of dimension $d-1$. We want to show that all elements of $Y$ contain $a \cap b$. Assume on the contrary that there exists a $c \in Y$ not containing $a \cap b$. As $c$ meets $a$ and $b$ in a subspace of dimension $d-1$, there exist points $p\in (a\cap c)\setminus a\cap b$ and $p'\in (b\cap c)\setminus a\cap b$. But then $\langle a \cap b,p,p'\rangle$ is a totally isotropic subspace containing the generators $a$ and $b$, contradiction.
\end{proof}

\section{EKR Sets, $t$ even}\label{sec_t_even}

Throughout this section we work in a finite classical polar space of rank $d > 2$ and given type $\epsilon$. We assume throughout this section that $t$ is an even integer satisfying $d\ge 2t\ge 0$.

\begin{defi} We define the constants $b_1^\even$ and $b_2^\even$ by
 \begin{align*}
    &b_1^\even = \gauss{d - \frac{3}{2} t}{t/2-1} c_{ 2t-1, t}\\
    &b_2^\even = q^{\epsilon \frac{t}{2} + \binom{t/2}{2}} \psi^\even .
 \end{align*}
\end{defi}

\begin{lemma}\label{lem_bounds_t_even}
  Let $Y$ be a $(d, t)$-EKR set.
 \begin{enumerate}[(a)]
  \item Let $P$ be a subspace of dimension at least $d-\frac{3}{2}t$. If $\dim(c \cap P) > \dim(P) - \frac{t}{2}$ for all elements $c$ of $Y$, then $Y$ has at most $b_1^\even$ elements.
  \item Let $U$ be a generator, $P$ a subspace of $U$ of dimension $d-\frac{3}{2}t$, and $A$ a subspace of $P$ with $\dim(A) \geq d - 2t+1$. If all elements $c$ of $Y$ satisfy $\dim(U \cap c) = d-\frac{t}{2}$, $\dim(c \cap P) = d-2t$, and $\dim(c \cap A) \geq \dim(A) - \frac{t}{2} + 1$, then $Y$ has at most $b_2^\even$ elements.
 \end{enumerate}
\end{lemma}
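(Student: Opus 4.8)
The plan is to prove both parts by reducing to a quotient polar space in which the Erd\H{o}s--Ko--Rado condition becomes either transparent or void. Recall that if $R$ is a totally isotropic subspace of dimension $r$, then $R^\perp/R$ is a polar space of the same type $\epsilon$ and rank $d-r$, whose generators are exactly the images of the generators of $\scrP$ through $R$. The two arguments differ only in which subspace I quotient by: in (a) I quotient by a generator of a rank-$(2t-1)$ quotient, in (b) by a codimension-$\frac t2$ subspace of the fixed generator $U$.

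For part (a) I would first reduce to the case $\dim(P)=d-\frac32t$. For an arbitrary $P$ of dimension $p\ge d-\frac32t$ pick any subspace $P_0\subseteq P$ with $\dim(P_0)=d-\frac32t$; the Grassmann inequality $\dim(c\cap P_0)\ge \dim(c\cap P)+\dim(P_0)-\dim(P)$ together with $\dim(c\cap P)\ge p-\frac t2+1$ gives $\dim(c\cap P_0)\ge d-2t+1$ for every $c\in Y$, so we may replace $P$ by $P_0$. Now every $c\in Y$ meets $P$ in dimension at least $d-2t+1$, hence $c$ contains a $(d-2t+1)$-dimensional subspace of $P$. Letting $R$ range over the $(d-2t+1)$-dimensional subspaces of $P$ and setting $Y_R=\{c\in Y : R\subseteq c\}$, we get $Y=\bigcup_R Y_R$, so $|Y|\le\sum_R|Y_R|$. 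Each nonempty $Y_R$ forces $R$ to be totally isotropic (it lies in a generator), and in the quotient $R^\perp/R$, of rank $2t-1$, the images of the elements of $Y_R$ are generators; for $c,c'\in Y_R$ one computes $\dim((c/R)\cap(c'/R))=\dim(c\cap c')-\dim(R)\ge (d-t)-(d-2t+1)=t-1$, i.e.\ they meet in codimension at most $(2t-1)-(t-1)=t$. Hence $Y_R/R$ is a $(2t-1,t)$-EKR set and $|Y_R|\le c_{2t-1,t}$. Since the number of $(d-2t+1)$-dimensional subspaces of $P$ is $\gauss{d-3t/2}{d-2t+1}=\gauss{d-3t/2}{t/2-1}$ by \eqref{gauss_duality}, we obtain $|Y|\le \gauss{d-\frac32t}{t/2-1}c_{2t-1,t}=b_1^\even$.

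For part (b) the generator $U$ plays the role of the ambient $\bbF_q^d$: to each $c\in Y$ I associate $W=U\cap c$, a $(d-\frac t2)$-dimensional subspace of $U$ (automatically totally isotropic, since $U$ is a generator). Because $P,A\subseteq U$ we have $W\cap P=c\cap P$ and $W\cap A=c\cap A$, so the hypotheses translate into $\dim(W\cap P)=d-2t$ and $\dim(W\cap A)\ge\dim(A)-\frac t2+1$. I would first reduce to $\dim(A)=d-2t+1$ by replacing $A$ with any $(d-2t+1)$-dimensional subspace $A'\subseteq A$; the elementary inequality $\codim_{A'}(c\cap A')\le\codim_A(c\cap A)\le\frac t2-1$ shows the intersection hypothesis is inherited by $A'$ for every $c$ simultaneously. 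With $\dim(A)=d-2t+1$, the admissible values of $\dim(W\cap A)$ are exactly $d-\frac52t+1+i$ for $i=1,\dots,\frac t2-1$, and $W$ is a $(d-\frac t2)$-space of $U\cong\bbF_q^d$ meeting $P$ and $A$ in these prescribed dimensions. Summing the counts $\psi_3$ over $i$, the number of possible subspaces $W$ is precisely $\psi^\even$.

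It then remains to bound, for a fixed admissible $W$, the number of $c\in Y$ with $U\cap c=W$. Such a $c$ is a generator through $W$ meeting $U$ in exactly $W$, which in the quotient $W^\perp/W$ of rank $\frac t2$ means that the generator $c/W$ is disjoint from the generator $U/W$. The number of generators disjoint from a fixed generator in a polar space of rank $m$ and type $\epsilon$ is $q^{\binom m2+\epsilon m}$; with $m=\frac t2$ this is the factor $q^{\epsilon\frac t2+\binom{t/2}{2}}$, so multiplying the two counts yields $|Y|\le q^{\epsilon\frac t2+\binom{t/2}{2}}\psi^\even=b_2^\even$. The routine parts are the dimension bookkeeping and the two reductions; the one external ingredient I expect to be the main point to pin down carefully is precisely the count $q^{\binom m2+\epsilon m}$ of generators disjoint from a fixed generator (equivalently, opposite in the quotient), which I would verify from the standard formula \eqref{number_gens} for the number of generators together with an inclusion--exclusion over the intersection dimension, or cite from a standard reference.
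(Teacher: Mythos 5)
Your proposal is correct and follows essentially the same route as the paper: in (a) you reduce to $\dim(P)=d-\tfrac32t$, cover $Y$ by the sets $Y_R$ over the $\gauss{d-3t/2}{t/2-1}$ subspaces $R$ of $P$ of dimension $d-2t+1$, and bound each $|Y_R|$ by $c_{2t-1,t}$ via the rank-$(2t-1)$ quotient; in (b) you reduce to $\dim(A)=d-2t+1$, count the $\psi^\even$ admissible traces $W=U\cap c$, and multiply by the $q^{\epsilon\frac t2+\binom{t/2}{2}}$ generators opposite $U/W$ in the rank-$\tfrac t2$ quotient, which is exactly the paper's argument (the disjoint-generator count is the paper's Corollary \ref{kor_dis_gens}). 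No gaps.
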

\begin{proof}
  \begin{enumerate}[(a)]
  \item By replacing $P$ if necessary by a subspace of $P$ of dimension $d-\frac{3}{2}t$, we may assume that $\dim(P) = d-\frac{3}{2}t$. Then the Gaussian coefficient in the definition of $b_1^\even$ is the number of subspaces $U$ of $P$ of codimension $\frac{t}{2} - 1$. By hypothesis, every element of $Y$ contains one such subspace $U$. The elements of $Y$ on such a fixed subspace $U$ form a $(2t-1, t)$-EKR set in the quotient geometry of $U$ and hence there are at most $c_{2t-1,t}$ such elements.
  \item By replacing $A$ if necessary by a subspace of $A$ of dimension $d-2t+1$, we may assume that $\dim(A) = d-2t+1$.
  There are $\psi^\even$ subspaces $T$ of $U$ with $\dim(T) = d - \frac{t}{2}$, $\dim(T \cap P) = d - 2 t$,
  and $\dim(T \cap A) = \dim(A) - \frac{t}{2}  + i$ with $i \in \{ 1, \ldots, t/2-1\}$.
  For each such $T$ consider the quotient geometry $T^\perp/T$ which is isomorphic
  to a polar space of the same type with rank $\frac{t}{2}$.
  It is well-known (see for example Corollary \ref{kor_dis_gens}) that there are exactly $q^{\epsilon \frac{t}{2} + \binom{t/2}{2}}$ generators in $T^\perp/T$
  disjoint to $U/T$. Hence, there are exactly $q^{\epsilon \frac{t}{2} + \binom{t/2}{2}}$ generators $a$ with $a \cap U = T$.
 \end{enumerate}
\end{proof}

\ifbuntig
  \definecolor{circle1}{RGB}{66,91,151}
  \definecolor{circle2}{RGB}{132,172,102}
  \definecolor{circle3}{RGB}{212,103,103}
\else
  \definecolor{circle1}{RGB}{210,210,210}
  \definecolor{circle2}{RGB}{210,210,210}
  \definecolor{circle3}{RGB}{210,210,210}
\fi

\pgfdeclarelayer{circle layer}
\pgfdeclarelayer{text layer}
\pgfsetlayers{main,circle layer,text layer}

\begin{figure}[ht]
 \centering
\begin{tikzpicture}[y=0.80pt, x=0.8pt,yscale=-1, inner sep=0pt, outer sep=0pt]
  \begin{scope}[scale=0.24]
  \begin{pgfonlayer}{circle layer}
\ifbuntig
  \path[shift={(-65.71429,1.42857)},draw=black,miter limit=4.00,line
    width=2.000pt] (654.2857,518.0765)arc(0.000:180.000:227.142855 and
    238.571)arc(-180.000:0.000:227.142855 and 238.571) -- cycle;
  \path[cm={{1.78788,0.0,0.0,1.20565,(-429.39395,-136.79339)}},draw=black,fill=circle1,opacity=0.500,miter
    limit=4.00,line width=1.000pt] (568.5714,689.5051)arc(0.000:180.000:117.857140
    and 177.143)arc(-180.000:0.000:117.857140 and 177.143) -- cycle;
  \path[cm={{1.78788,0.0,0.0,1.20565,(-601.53681,-388.93629)}},draw=black,fill=circle2,opacity=0.500,miter
    limit=4.00,line width=1.000pt] (568.5714,689.5051)arc(0.000:180.000:117.857140
    and 177.143)arc(-180.000:0.000:117.857140 and 177.143) -- cycle;
  \path[cm={{1.78788,0.0,0.0,1.20565,(-310.10824,-413.22198)}},draw=black,fill=circle3,opacity=0.500,miter
    limit=4.00,line width=1.000pt] (568.5714,689.5051)arc(0.000:180.000:117.857140
    and 177.143)arc(-180.000:0.000:117.857140 and 177.143) -- cycle;
\else
  \path[shift={(-65.71429,1.42857)},draw=black,miter limit=4.00,line
    width=2.000pt] (654.2857,518.0765)arc(0.000:180.000:227.142855 and
    238.571)arc(-180.000:0.000:227.142855 and 238.571) -- cycle;
  \path[cm={{1.78788,0.0,0.0,1.20565,(-429.39395,-136.79339)}},draw=black,fill=circle1,opacity=0.500,miter
    limit=4.00,line width=1.000pt] (568.5714,689.5051)arc(0.000:180.000:117.857140
    and 177.143)arc(-180.000:0.000:117.857140 and 177.143) -- cycle;
  \path[cm={{1.78788,0.0,0.0,1.20565,(-601.53681,-388.93629)}},draw=black,fill=circle2,opacity=0.500,miter
    limit=4.00,line width=1.000pt] (568.5714,689.5051)arc(0.000:180.000:117.857140
    and 177.143)arc(-180.000:0.000:117.857140 and 177.143) -- cycle;
  \path[cm={{1.78788,0.0,0.0,1.20565,(-310.10824,-413.22198)}},draw=black,fill=circle3,opacity=0.500,miter
    limit=4.00,line width=1.000pt] (568.5714,689.5051)arc(0.000:180.000:117.857140
    and 177.143)arc(-180.000:0.000:117.857140 and 177.143) -- cycle;
\fi
  \end{pgfonlayer}
  \end{scope}
  \begin{pgfonlayer}{text layer}
    \node at (-0.7cm, 5.2cm) {$U = \langle \ell_{12}, \ell_{13}, \ell_{23} \rangle$};
    \node at (0.6cm, 2.7cm) {$a_1$};
    \node at (4.1cm, 2.3cm) {$a_2$};
    \node at (2.6cm, 5.6cm) {$a_3$};
    \node at (2.4cm, 2.7cm) {$\ell_{12}$};
    \node at (3.1cm, 3.8cm) {$\ell_{23}$};
    \node at (1.85cm, 3.9cm) {$\ell_{13}$};
    \node at (2.425cm, 3.55cm) {$P$};
  \end{pgfonlayer}
\end{tikzpicture}
 \caption{The setting of Lemma \ref{lem_collected_properties_t_even}.}
 \label{fig:std_config}
\end{figure}
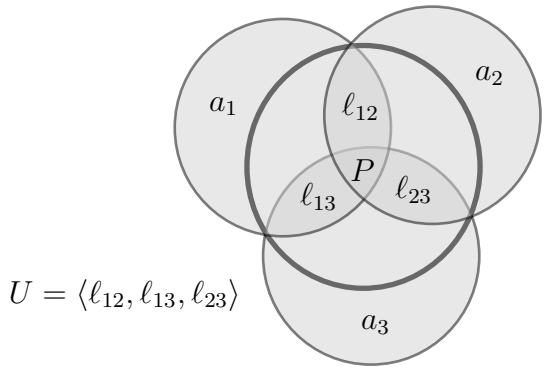

\begin{lemma}\label{lem_collected_properties_t_even}
 Let $Y$ be a $(d, t)$-EKR set, and consider $a_1, a_2, a_3 \in Y$. Then the following holds true.
 \begin{enumerate}[(a)]
  \item The dimension of $a_1 \cap a_2 \cap a_3$ is at least $d - \frac{3}{2} t$.
  \item Suppose that equality holds in Part (a) and put $U := U_{123}$ and $P := a_1 \cap a_2 \cap a_3$. Then
  \begin{enumerate}[(i)]
   \item $\dim(U) = d$.
   \item $\dim(\ell_{ij}) = d- t$ for $1 \leq i < j \leq 3$.
   \item $\dim(a_i \cap U) = d- \frac{t}{2}$ and $a_i \cap U = \langle \ell_{ij}, \ell_{ik} \rangle$ for $\{i, j, k\} = \{1, 2, 3\}$.
   \item Every $b \in Y$ satisfies $\dim(b \cap U) \geq d - \frac{t}{2}$ or $\dim(b \cap P) > d-2t$.
  \end{enumerate}
 \end{enumerate}
\end{lemma}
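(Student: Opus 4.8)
The plan is to write $\ell_{ij}:=a_i\cap a_j$, $P:=a_1\cap a_2\cap a_3$, and $U:=U_{123}=\langle\ell_{12},\ell_{13},\ell_{23}\rangle$, and to base everything on two elementary observations. First, the three pairwise intersections all meet in $P$, since e.g. $\ell_{12}\cap\ell_{23}=(a_1\cap a_2)\cap(a_2\cap a_3)=P$, and likewise for the other two pairs. Second, $U$ is totally isotropic: any two of $\ell_{12},\ell_{13},\ell_{23}$ lie in a common generator (namely $\ell_{12},\ell_{13}\subseteq a_1$, $\ell_{12},\ell_{23}\subseteq a_2$, $\ell_{13},\ell_{23}\subseteq a_3$), so the three subspaces are totally isotropic and pairwise perpendicular, hence so is their span.

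For (a) I would pass to a quotient polar space. Put $c_{ij}:=\codim(\ell_{ij})=d-\dim(\ell_{ij})\le t$ and $\pi:=\codim(P)$, and set $W:=\langle\ell_{12},\ell_{13}\rangle\subseteq a_1$, so that $\dim W=\dim\ell_{12}+\dim\ell_{13}-\dim P=d-c_{12}-c_{13}+\pi$. Since $W$ is totally isotropic, $W^\perp/W$ is a polar space of rank $d-\dim W=c_{12}+c_{13}-\pi$. Now $\ell_{23}\perp\ell_{12}$ (both in $a_2$) and $\ell_{23}\perp\ell_{13}$ (both in $a_3$), so $\ell_{23}\subseteq W^\perp$; moreover $\ell_{23}\cap W\subseteq a_1\cap\ell_{23}=P$ while $P\subseteq\ell_{23}\cap W$, so $\ell_{23}\cap W=P$. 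Hence the image of $\ell_{23}$ is a totally isotropic subspace of $W^\perp/W$ of dimension $\dim\ell_{23}-\dim P=\pi-c_{23}$, which cannot exceed the rank $c_{12}+c_{13}-\pi$. This yields $2\pi\le c_{12}+c_{13}+c_{23}\le 3t$, that is $\dim P=d-\pi\ge d-\frac32 t$.

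For (b), suppose $\dim P=d-\frac32 t$, i.e. $\pi=\frac32 t$. Then the (symmetric) inequality $2\pi\le c_{12}+c_{13}+c_{23}\le 3t$ must hold with equality throughout, forcing $c_{12}=c_{13}=c_{23}=t$, which is (ii). A direct count then gives $\dim W=d-\frac{t}{2}$ and $\dim U=\dim W+\dim\ell_{23}-\dim(W\cap\ell_{23})=(d-\frac{t}{2})+(d-t)-(d-\frac32 t)=d$, which is (i). For (iii), the modular law applied to $W\subseteq a_1$ gives
\begin{align*}
a_1\cap U=a_1\cap\langle W,\ell_{23}\rangle=W+(a_1\cap\ell_{23})=W+P=W=\langle\ell_{12},\ell_{13}\rangle,
\end{align*}
of dimension $d-\frac{t}{2}$; the cases $i=2,3$ follow by symmetry.

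The hard part is (iv). I would fix $b\in Y$, assume the second alternative fails, i.e. $\dim(b\cap P)\le d-2t$, and deduce $\dim(b\cap U)\ge d-\frac{t}{2}$. Set $V_{ij}:=b\cap\ell_{ij}$. Feeding part (a) back in on the triple $\{b,a_i,a_j\}$ gives $\dim V_{ij}=\dim(b\cap a_i\cap a_j)\ge d-\frac32 t$. The three $V_{ij}$ lie in $b\cap U$ and pairwise intersect in $b\cap P$ (for instance $V_{12}\cap V_{13}=b\cap(\ell_{12}\cap\ell_{13})=b\cap P$), so
\begin{align*}
\dim\langle V_{12},V_{13}\rangle\ge 2\bigl(d-\tfrac32 t\bigr)-(d-2t)=d-t.
\end{align*}
Since $\langle V_{12},V_{13}\rangle\subseteq W$ and $V_{23}\subseteq\ell_{23}$, their intersection lies in $W\cap\ell_{23}=P$, hence in $b\cap P$, and therefore
\begin{align*}
\dim(b\cap U)\ge\dim\langle V_{12},V_{13},V_{23}\rangle\ge(d-t)+\bigl(d-\tfrac32 t\bigr)-(d-2t)=d-\tfrac{t}{2}.
\end{align*}
The main obstacle is exactly this step: one has to realize that the bound of part (a) may be reapplied to the mixed triples $\{b,a_i,a_j\}$, and then squeeze the three subspaces $V_{ij}$ against the single quantity $\dim(b\cap P)$ using the identity $W\cap\ell_{23}=P$.
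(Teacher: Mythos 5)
Your proof is correct and follows essentially the same route as the paper: part (a) rests on the same dimension count $\dim\langle \ell_{12},\ell_{13},\ell_{23}\rangle=\dim\ell_{12}+\dim\ell_{13}+\dim\ell_{23}-2\dim P\le d$ (you merely phrase the bound via the rank of $W^\perp/W$ instead of saying directly that $U$ is totally isotropic of dimension at most $d$), the equality analysis gives (i)--(iii) exactly as in the paper (your modular-law derivation of $a_1\cap U=W$ is a slightly slicker version of the paper's dimension count), and for (iv) you use the paper's key idea verbatim, namely reapplying (a) to the triples $\{b,a_i,a_j\}$ and spanning the three subspaces $b\cap\ell_{ij}$ inside $b\cap U$ while controlling their pairwise intersections by $b\cap P$.
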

\begin{proof}
 \begin{enumerate}[(a)]
  \item As $Y$ is a $(d, t)$-EKR set, then $\dim(\ell_{ij}) \geq d-t$. As $U$ is totally isotropic, then $\dim(U) \leq d$. Since $\ell_{12} \cap \ell_{23} = P$ as well as $\langle \ell_{12}, \ell_{13} \rangle \cap \ell_{23} = P$ (because $\langle \ell_{12}, \ell_{13} \rangle \cap \ell_{23} \subseteq a_1 \cap \ell_{23} = P$), then
  \begin{align*}
   d &\geq \dim(U) = \dim( \langle \ell_{12}, \ell_{13}, \ell_{23}\rangle)\\
   &= \dim(\langle \ell_{12}, \ell_{13} \rangle) + \dim(\ell_{23}) - \dim(P)\\
   &= \dim(\ell_{12}) + \dim(\ell_{13}) - \dim(P) + \dim(\ell_{23}) - \dim(P)\\
   &\geq 3(d-t) - 2\dim(P).
  \end{align*}
  Hence, $\dim(P) \geq d- \frac{3}{2} t$.
  \item As $\dim(P) = d - \frac{3}{2} t$, the argument in (a) shows that $\dim(U) = d$ and $\dim(\ell_{ij}) = d-t$ for all $i, j$.
  For $\{ i,j,k\} = \{1, 2, 3\}$, we have $\ell_{ij}, \ell_{ik} \subseteq a_i \cap U$, and so $U = \langle a_i \cap U, \ell_{jk}\rangle$.
  Also $a_i \cap U \cap \ell_{jk} = a_i \cap \ell_{jk} = P$, and hence
  \begin{align*}
   d &= \dim(U) = \dim(\langle a_i \cap U, \ell_{jk}\rangle) = \dim(a_i \cap U) + \dim(\ell_{jk}) - \dim(P)\\
   &= \dim(a_i \cap U) + (d-t) - (d-\frac{3}{2} t).
  \end{align*}
  Therefore, $\dim(a_i \cap U) = d - \frac{t}{2}$ which implies that $a_i \cap U = \langle \ell_{ij}, \ell_{ik}\rangle$. We have proved the first three statements. For the final part, consider $b \in Y$.

  Part (a) shows that $\dim( b \cap \ell_{ij}) \geq d - \frac{3}{2} t$.
  Put $\ell_{ij}' := b \cap \ell_{ij}$.
  Then $\langle \ell_{12}', \ell_{13}'\rangle \cap \ell_{23}' \subseteq (a_1 \cap b) \cap \ell_{23} = b \cap P$ and hence equality holds. Clearly, $\ell_{12}' \cap \ell_{13}' = b \cap P$.
  This implies that
  \begin{align*}
   \dim(b \cap U) &\geq \dim(\langle \ell_{12}', \ell_{13}', \ell_{23}' \rangle)\\
   &\geq \dim(\ell_{12}') + \dim(\ell_{13}') + \dim(\ell_{23}') - 2 \dim(b \cap P)\\
   &\geq 3(d-\frac{3}{2}t) - 2\dim(b \cap P).
  \end{align*}
  Hence, if $\dim(b \cap P) \leq d-2t$, then $\dim(b \cap U) \geq d- \frac{t}{2}$.
 \end{enumerate}
\end{proof}

\begin{lemma}\label{lem_upperbound_1_t_even}
 If $\dim(a_1 \cap a_2 \cap a_3) > d- \frac{3}{2}t$ for all $a_1, a_2, a_3$ of a maximal $(d, t)$-EKR set $Y$, then $|Y| \leq b_1^\even$.
\end{lemma}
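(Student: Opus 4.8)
The plan is to produce a single subspace $P$ of dimension at least $d-\frac{3}{2}t$ that meets every element of $Y$ in more than $\dim(P)-\frac{t}{2}$ dimensions, so that Lemma \ref{lem_bounds_t_even}(a) applies verbatim and yields $|Y|\le b_1^\even$. The entire argument thus reduces to exhibiting the right $P$; once it is found, the conclusion is a one-line invocation of the previous lemma rather than a fresh counting computation.

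First I would dispose of the trivial case. For $t>0$ a single generator can always be enlarged to a larger $(d,t)$-EKR set, so a maximal $(d,t)$-EKR set has at least two elements, and I may fix two of them. The key input is then that $Y$ must contain a pair meeting in codimension \emph{exactly} $t$. Indeed, if every pair $a,b\in Y$ satisfied $\dim(a\cap b)\ge d-t+1$, then $Y$ would be a $(d,t-1)$-EKR set, and Lemma \ref{lem_max_dmtp1_is_not_max_dmt_ekr_set} tells us such a set is never a maximal $(d,t)$-EKR set, contradicting maximality. Hence I can pick $a_1,a_2\in Y$ with $\dim(a_1\cap a_2)=d-t$.

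I would then set $P:=a_1\cap a_2$, so that $\dim(P)=d-t\ge d-\frac{3}{2}t$, and verify the codimension condition for an arbitrary $c\in Y$. If $c\in\{a_1,a_2\}$ then $P\subseteq c$, so $\dim(c\cap P)=d-t> d-\frac{3}{2}t=\dim(P)-\frac{t}{2}$ since $t>0$. If $c\notin\{a_1,a_2\}$, I apply the hypothesis to the triple $a_1,a_2,c$ to get $\dim(c\cap P)=\dim(a_1\cap a_2\cap c)>d-\frac{3}{2}t=\dim(P)-\frac{t}{2}$. In both cases the hypothesis of Lemma \ref{lem_bounds_t_even}(a) is satisfied for this $P$, and that lemma delivers $|Y|\le b_1^\even$.

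The one subtlety to watch, and the reason the exactness of the codimension-$t$ pair is needed, is the arithmetic in the last step: the lower bound $d-\frac{3}{2}t$ coming from the triple hypothesis must dominate $\dim(P)-\frac{t}{2}$, and these two quantities agree precisely when $\dim(P)=d-t$. Had I taken an arbitrary pair meeting in codimension $t'<t$, then $\dim(P)-\frac{t}{2}=d-t'-\frac{t}{2}$ would \emph{exceed} $d-\frac{3}{2}t$, and the required strict inequality would no longer follow from the hypothesis. So the only genuine obstacle is securing a pair of codimension exactly $t$, which is exactly what maximality provides through Lemma \ref{lem_max_dmtp1_is_not_max_dmt_ekr_set}; the remainder is a direct substitution into Lemma \ref{lem_bounds_t_even}(a).
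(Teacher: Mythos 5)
Your proof is correct and follows the same skeleton as the paper's: both use Lemma \ref{lem_max_dmtp1_is_not_max_dmt_ekr_set} to extract a pair $a_1,a_2\in Y$ with $\dim(a_1\cap a_2)=d-t$ and then feed a suitable $P$ into Lemma \ref{lem_bounds_t_even}(a). The only difference is your choice of $P$: the paper takes $P:=a_1\cap a_2\cap a_3$ for a third element $a_3$ and then needs a dimension-formula computation inside $a_1\cap a_2$ to verify the meeting condition, whereas you take $P:=a_1\cap a_2$ itself, for which the condition $\dim(c\cap P)>\dim(P)-\frac{t}{2}$ is literally the hypothesis of the lemma applied to the triple $(a_1,a_2,c)$. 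This is a mild but genuine simplification -- it avoids both the third element and the dimension-formula step -- and your closing remark correctly identifies why the pair must meet in codimension exactly $t$ for the arithmetic to close.
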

\begin{proof}
 By Lemma \ref{lem_max_dmtp1_is_not_max_dmt_ekr_set}, there exist $a_1, a_2 \in Y$ with $\dim(a_1 \cap a_2) = d-t$. Consider a third element $a_3 \in Y$ and put $P := a_1 \cap a_2 \cap a_3$. Consider any element $b \in Y$. By hypothesis, $\dim(b \cap a_1 \cap a_2) \geq d - \frac{3}{2}t +1$. As $P$ and $b \cap a_1 \cap a_2$ lie in $a_1 \cap a_2$, the dimension formula shows that
 \begin{align*}
  \dim(b \cap P) &= \dim((b \cap a_1 \cap a_2) \cap P)\\
  &\geq \dim(b \cap a_1 \cap a_2) + \dim(P) - \dim(a_1 \cap a_2)\\
  &\geq \dim(P) + 1 - \frac{t}{2}.
 \end{align*}
  Lemma \ref{lem_bounds_t_even} shows that $|y| \leq b_1^\even$.
\end{proof}

\begin{lemma}\label{lem_t_even_P_large2}
 Let $Y$ be a $(d, t)$-EKR set such that there exists a generator $U$ and a $(d-\frac{3}{2}t)$-space $P \subseteq U$ such that $a \in Y$ implies that
 \begin{align*}
  \dim(a \cap U) \geq d- \frac{t}{2} \text{ or } \dim(a \cap P) > d-2t.
 \end{align*}
  If $\dim(a \cap U) < d-\frac{t}{2}$ for at least one element $a$ of $Y$, then $|Y| \leq b_1^\even + b_2^\even$.
\end{lemma}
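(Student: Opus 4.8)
The plan is to split $Y$ according to the two alternatives in the hypothesis and to bound each part by one of the two estimates of Lemma \ref{lem_bounds_t_even}. Set
\[
 Y_1 = \{a \in Y : \dim(a \cap P) > d - 2t\}, \qquad Y_2 = Y \setminus Y_1 .
\]
Then $Y_1$ is a $(d,t)$-EKR set all of whose elements meet $P$ (a space of dimension $d - \tfrac{3}{2}t$) in more than $\dim(P) - \tfrac{t}{2} = d - 2t$ dimensions, so Lemma \ref{lem_bounds_t_even}(a) gives $|Y_1| \le b_1^\even$. For $a \in Y_2$ we have $\dim(a \cap P) \le d - 2t$, so the first alternative of the hypothesis must hold and $\dim(a \cap U) \ge d - \tfrac{t}{2}$. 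Since $a \cap P = (a \cap U) \cap P$ inside $U$, the Grassmann identity forces $\dim(a \cap P) \ge \dim(a\cap U) + \dim(P) - d \ge d - 2t$; comparing with $\dim(a \cap P) \le d - 2t$ I conclude that $\dim(a \cap U) = d - \tfrac{t}{2}$ and $\dim(a \cap P) = d - 2t$ for every $a \in Y_2$.

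By hypothesis there is an $a_0 \in Y$ with $\dim(a_0 \cap U) < d - \tfrac{t}{2}$; then the first alternative fails for $a_0$, so $\dim(a_0 \cap P) \ge d - 2t + 1$. I fix once and for all a subspace $A \subseteq a_0 \cap P$ of dimension exactly $d - 2t + 1$; note that $A \subseteq P$ and $A \subseteq a_0 \cap U$. To apply Lemma \ref{lem_bounds_t_even}(b) to $Y_2$ (which yields $|Y_2| \le b_2^\even$ and hence $|Y| = |Y_1| + |Y_2| \le b_1^\even + b_2^\even$) it remains to verify the one nontrivial input of that lemma, namely
\[
 \dim(c \cap A) \ge \dim(A) - \tfrac{t}{2} + 1 = d - \tfrac{5}{2}t + 2 \qquad \text{for all } c \in Y_2 .
\]
Because $A \subseteq U$ we have $c \cap A = (c \cap U) \cap A$, and the Grassmann identity in $U$ only yields $\dim(c \cap A) \ge (d - \tfrac{t}{2}) + (d - 2t + 1) - d = d - \tfrac{5}{2}t + 1$, which is one short of the target.

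The main obstacle is precisely this missing $+1$, and I expect it to be the point where the totally isotropic structure must be used: a purely dimension-theoretic argument cannot work, since subspaces of these dimensions in general position do meet in the smaller dimension $d - \tfrac{5}{2}t + 1$. I would argue by contradiction, assuming $\dim(c \cap A) = d - \tfrac{5}{2}t + 1$ for some $c \in Y_2$. Writing $T = c \cap U$, this is equivalent to $T + A = U$, and since $A \subseteq a_0 \cap U =: V$ it gives $T + V = U$ as well. The key structural fact is that for two generators $U$ and $c$ the form induces a \emph{non-degenerate} pairing between $U/(c\cap U)$ and $c/(c \cap U)$: if some $u \in U$ were perpendicular to all of $c$ with $u \notin c$, then $\langle c, u\rangle$ would be totally isotropic of dimension $d + 1$, exceeding the rank. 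Now $c \cap a_0$ lies in the generator $a_0$, hence is perpendicular to $V$, and it lies in $c$, hence is perpendicular to $T$; therefore it is perpendicular to $T + V = U$. By the non-degeneracy just noted, every element of $c \cap a_0$ then lies in $c \cap U = T$, so $c \cap a_0 \subseteq V \cap T$. But $\dim(V \cap T) = \dim(V) - \tfrac{t}{2} < (d - \tfrac{t}{2}) - \tfrac{t}{2} = d - t$, using $\dim(V) < d - \tfrac{t}{2}$, which contradicts $\dim(c \cap a_0) \ge d - t$ (valid since $c, a_0 \in Y$). This contradiction establishes the displayed inequality for every $c \in Y_2$, which completes the application of Lemma \ref{lem_bounds_t_even}(b) and gives $|Y| \le b_1^\even + b_2^\even$.
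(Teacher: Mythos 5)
Your proof is correct and takes essentially the same route as the paper: the same decomposition of $Y$ (your $Y_1$, $Y_2$ are the paper's $Y_2$, $Y_1$), the same two applications of Lemma \ref{lem_bounds_t_even}, and the same key geometric step of exploiting that $U$ is a generator so that $c\cap a_0\perp U$ forces $c\cap a_0\subseteq U^\perp=U$, leading to a dimension contradiction with $\dim(c\cap a_0)\ge d-t$. The only cosmetic difference is that you fix $A\subseteq a_0\cap P$ of dimension exactly $d-2t+1$ and land the contradiction on $c\cap a_0\subseteq T\cap V$, whereas the paper works with $A=a_0\cap P$ itself and contradicts $\dim(a_0\cap U)<d-\tfrac{t}{2}$ directly.
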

\begin{proof}
  Define
  \begin{align*}
   Y_1 &= \{ a \in Y: \dim(a \cap U) = d- \frac{t}{2}, \dim(a \cap P) = d-2t\},\\
   Y_2 &= \{ a \in Y: \dim(a \cap P) > d-2t\},
  \end{align*}
  We show first that $Y = Y_1 \cup Y_2$. For this, let $a\in Y$. Then
 \begin{align*}
 \dim(a\cap U)&=\dim(\langle a \cap U, P \rangle)+\dim(a\cap P)-\dim(P)
 \\
   &\leq \dim(U)+\dim(a\cap P)-\dim(P)
 \\
   &\leq \dim(a\cap P)+\frac32t.
 \end{align*}
 Hence, if $a\notin Y_2$, then the hypothesis of the lemma implies that $a\in Y_1$. Thus we have proved that $Y = Y_1 \cup Y_2$.

  The first part of Lemma \ref{lem_bounds_t_even} gives $|Y_2| \leq b_1^\even$.
  By hypothesis, there exists an element $a_2 \in Y$ with $\dim(a_2 \cap U) < d- \frac{t}{2}$.
  Then $a_2 \cap P$ has dimension at least $d-2t+1$.
  We shall show that $\dim(a_1 \cap a_2 \cap P) > \dim(a_2 \cap P) - \frac{t}{2}$ for all $a_1 \in Y_1$.
  Then the second part of Lemma \ref{lem_bounds_t_even} with $A := a_2 \cap P$ gives $|Y_1| \leq b_2^\even$ and we are done.
  Consider any element $a_1 \in Y_1$.

  We want to show that $\langle a_1 \cap P, a_2 \cap P \rangle$ is a proper subspace of $P$.
  Suppose to the contrary that $\langle a_1 \cap P, a_2 \cap P \rangle = P$. As $\dim(U) - \dim(a_1 \cap U) = t/2 = \dim(P) - \dim(a_1 \cap P)$, this implies that $U = \langle a_1 \cap U, a_2 \cap P\rangle$. Hence, every point of $a_1 \cap a_2$ lies in $U^\bot$; but $U$ is a generator, so $a_1 \cap a_2 \subseteq U$. It follows that
  \begin{align*}
   \dim(a_2 \cap U) &\geq \dim(\langle a_1 \cap a_2, a_2 \cap P\rangle)\\
   &= \dim(a_1 \cap a_2) + \dim(a_2 \cap P) - \dim(a_1 \cap a_2 \cap P)\\
   &= \dim(a_1 \cap a_2) - \dim(a_1 \cap P) + \dim(\langle a_1 \cap P, a_2 \cap P \rangle)\\
   &= \dim(a_1 \cap a_2) + \dim(P) - \dim(a_1 \cap P)\\
   &= \dim(a_1 \cap a_2) + \frac{t}{2} \geq d- \frac{t}{2}.
  \end{align*}
  Here we use $\dim(a_1\cap a_2) \geq d-t$, since $a_1, a_2 \in Y$.
  This contradicts $\dim(a_2 \cap U) < d- \frac{t}{2}$.

  Hence $\langle a_1 \cap P, a_2 \cap P\rangle$ is a proper subspace of $P$ and thus has dimension at most $\dim(P) - 1 = d-\frac{3}{2}t - 1$. It follows that
  \begin{align*}
   \dim(a_1 \cap a_2 \cap P) &= \dim((a_1 \cap P) \cap (a_2 \cap P))\\
   &\geq \dim(a_1 \cap P) + \dim(a_2 \cap P) - (d- \frac{3}{2}t - 1)\\
   &= \dim(a_2 \cap P) - \frac{t}{2} + 1.
  \end{align*}
  This completes the proof.
\end{proof}

\begin{beispiel}\label{lem_ex_t_even}
 The set consisting of all generators that meet a given generator in a subspace of dimension at least $d-\frac{t}{2}$ is a maximal $(d, t)$-EKR set.
\end{beispiel}
\begin{proof}
 As the given generator $U$ has dimension $d$, the dimension formula shows that the meet of two elements of $Y$ has dimension at least $d-t$, thus $Y$ is a $(d, t)$-EKR set. Consider any generator $T$ with $\dim(U \cap T) < d - \frac{t}{2}$. Then $U$ has a subspace $R$ of dimension $d- \frac{t}{2}$ such that $\dim(R \cap U \cap T) < d-t$. The subspace $T' := \langle R, R^\bot \cap T\rangle$ is a generator on $R$ and in the quotient on $R$ one sees that there exists a generator $T''$ on $R$ with $T'' \cap T' = R$. Then
 \begin{align*}
  T'' \cap T &= T'' \cap T \cap R^\bot\\
  &= T'' \cap T \cap T' = T \cap R.
 \end{align*}
 Hence, $\dim(T \cap T'') < d-t$. As $T'' \in Y$, this shows that $Y \cup \{ T \}$ is not a $(d, t)$-EKR set and hence $Y$ is maximal.
\end{proof}

\begin{satz}\label{thm_max_t_even}
 Let $Y$ be a maximal $(d, t)$-EKR set with $d\ge 2t$ and $|Y| > b_1^\even + b_2^\even$. Then $Y$ is as in Example \ref{lem_ex_t_even}.
\end{satz}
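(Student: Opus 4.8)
The plan is to chain the three structural lemmas of this section and then close the argument by maximality. First I would use the size hypothesis to force the existence of a \emph{tight triple}. Since $b_2^\even = q^{\epsilon \frac{t}{2} + \binom{t/2}{2}}\psi^\even$ is a nonnegative power of $q$ times the nonnegative count $\psi^\even$, we have $b_2^\even \geq 0$, so $|Y| > b_1^\even + b_2^\even$ gives $|Y| > b_1^\even$. Applying Lemma \ref{lem_upperbound_1_t_even} to the maximal set $Y$ then rules out the possibility that $\dim(a_1 \cap a_2 \cap a_3) > d - \frac{3}{2}t$ for all triples in $Y$; combined with the lower bound in Lemma \ref{lem_collected_properties_t_even}(a), there must exist $a_1, a_2, a_3 \in Y$ with $\dim(a_1 \cap a_2 \cap a_3) = d - \frac{3}{2}t$ exactly. (Note that any triple with a repeated element already satisfies the strict inequality, so such a tight triple consists of three distinct generators.)

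Next I would extract the canonical configuration attached to this tight triple. Applying Lemma \ref{lem_collected_properties_t_even}(b) produces a generator $U := U_{123} = \langle \ell_{12}, \ell_{13}, \ell_{23}\rangle$ of dimension $d$ and a subspace $P := a_1 \cap a_2 \cap a_3 \subseteq U$ of dimension $d - \frac{3}{2}t$. Crucially, part (b)(iv) asserts that every $b \in Y$ satisfies the dichotomy $\dim(b \cap U) \geq d - \frac{t}{2}$ or $\dim(b \cap P) > d - 2t$. This is precisely the standing hypothesis of Lemma \ref{lem_t_even_P_large2} for this particular pair $(U, P)$.

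The decisive step is then to invoke the contrapositive of Lemma \ref{lem_t_even_P_large2}. That lemma states that if some $a \in Y$ satisfied $\dim(a \cap U) < d - \frac{t}{2}$, then $|Y| \leq b_1^\even + b_2^\even$, contradicting the size hypothesis. Hence no such element exists, i.e.\ every element of $Y$ meets the generator $U$ in a subspace of dimension at least $d - \frac{t}{2}$. Writing $Y_U$ for the set of all generators meeting $U$ in dimension at least $d - \frac{t}{2}$, we have shown $Y \subseteq Y_U$. By Example \ref{lem_ex_t_even}, $Y_U$ is itself a $(d, t)$-EKR set; since $Y$ is maximal and $Y \subseteq Y_U$, any strict inclusion $Y \subsetneq Y_U$ would contradict maximality, so $Y = Y_U$, which is exactly the configuration in Example \ref{lem_ex_t_even}.

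Finally, I would emphasize where the real difficulty sits. All the genuinely hard content---the dimension count pinning down the triple configuration and the two counting bounds $b_1^\even$ and $b_2^\even$---is already packaged in the preceding lemmas, so the theorem itself reduces to a careful bookkeeping argument. The only points requiring attention are that the dichotomy of Lemma \ref{lem_collected_properties_t_even}(b)(iv) matches the hypothesis of Lemma \ref{lem_t_even_P_large2} \emph{with the same} $U$ and $P$, and that $b_2^\even \geq 0$ is used to pass from $|Y| > b_1^\even + b_2^\even$ to $|Y| > b_1^\even$ in the first step. I do not anticipate any substantive obstacle beyond ensuring these hypotheses align and that the maximality of $Y$ is genuinely needed to upgrade the inclusion $Y \subseteq Y_U$ to an equality.
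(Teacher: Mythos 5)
Your proposal is correct and follows essentially the same route as the paper's proof: locate a tight triple via Lemma \ref{lem_upperbound_1_t_even} and Lemma \ref{lem_collected_properties_t_even}(a), extract the pair $(U,P)$ and the dichotomy from Lemma \ref{lem_collected_properties_t_even}(b), rule out the bad case by Lemma \ref{lem_t_even_P_large2} against the size hypothesis, and conclude by maximality. The paper's own proof is just a terser version of the same chain of lemmas.
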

\begin{proof}
 In the view of Lemma \ref{lem_upperbound_1_t_even} and Lemma \ref{lem_collected_properties_t_even} (a) there are distinct elements $a_1, a_2, a_3 \in Y$ such that $P := a_1 \cap a_2 \cap a_3$ has dimension $d-\frac{3}{2} t$. Put $U := \langle a_1 \cap a_2, a_1 \cap a_3, a_2 \cap a_3\rangle$. Lemma \ref{lem_collected_properties_t_even} gives $\dim(U) = d$ and shows that every $b \in Y$ satisfies $\dim(b \cap U) \geq d - \frac{t}{2}$ or $\dim(b \cap P) > d-2t$. If $\dim(b \cap U) \geq d- \frac{t}{2}$ for all $b \in Y$, then the maximality of $Y$ implies that $Y$ is Example \ref{lem_ex_t_even}. Otherwise, Lemma \ref{lem_t_even_P_large2} shows that $|Y| \leq b_1^\even + b_2^\even$.
\end{proof}

The following result was already shown by Brouwer and Hemmeter in \cite{MR1158800} for $\epsilon \neq \frac{1}{2}, \frac{3}{2}$.

\begin{kor}\label{kor_max_t_2}
  Let $Y$ be an $(d, 2)$-EKR set with $d\ge 4$ of maximum size (for fixed type $e$).
  Then either $Y$ is as in Example \ref{lem_ex_t_even} or all elements of $Y$ contain a fixed $(d-3)$-space.
\end{kor}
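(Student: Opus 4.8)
The plan is to specialize the machinery of Section \ref{sec_t_even} to $t=2$ and to read off \emph{structure}, not merely sizes, from the internal mechanics of the supporting lemmas. The first thing I would record is that $t=2$ forces $d\ge 2t=4$ (matching the hypothesis), that the sum defining $\psi^\even$ is empty, and hence $b_2^\even = q^{\epsilon+\binom{1}{2}}\psi^\even = 0$, while $b_1^\even = \gauss{d-3}{0}\,c_{3,2} = c_{3,2}$. The vanishing of $b_2^\even$ is the crucial feature: it upgrades a mere size bound into the assertion that a whole set of potential elements is empty. Since a maximum $(d,2)$-EKR set is in particular maximal, Lemma \ref{lem_max_dmtp1_is_not_max_dmt_ekr_set} supplies a pair $a_1,a_2\in Y$ with $\dim(a_1\cap a_2)=d-2$, which I will use as the anchor of a case distinction on triple intersections. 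I would also note that the whole argument is uniform in the type $\epsilon$, so it simultaneously recovers the Brouwer--Hemmeter classification and settles the Hermitian cases $\epsilon\in\{\frac12,\frac32\}$.

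By Lemma \ref{lem_collected_properties_t_even}(a) one has $\dim(a_1\cap a_2\cap b)\in\{d-3,d-2\}$ for every $b\in Y$, which gives an exhaustive dichotomy. In the first case, where $\dim(a_1\cap a_2\cap b)\ge d-2$ for all $b$, the subspace $a_1\cap a_2\cap b$ lies in $a_1\cap a_2$ of dimension exactly $d-2$, so $a_1\cap a_2\subseteq b$ for every $b$; thus all generators share the fixed $(d-2)$-space $a_1\cap a_2$, a fortiori a fixed $(d-3)$-space, and $Y$ is of the second type. In the remaining case some $a_3\in Y$ gives $\dim(a_1\cap a_2\cap a_3)=d-3$, and I would invoke Lemma \ref{lem_collected_properties_t_even}(b) to produce the generator $U:=\langle a_1\cap a_2, a_1\cap a_3, a_2\cap a_3\rangle$ of dimension $d$ and the $(d-3)$-space $P:=a_1\cap a_2\cap a_3$, together with the dichotomy that each $b\in Y$ satisfies $\dim(b\cap U)\ge d-1$ or $\dim(b\cap P)>d-4$.

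Within this case I would split once more. If all $b$ satisfy $\dim(b\cap U)\ge d-1$, then $Y$ is contained in the $(d,2)$-EKR set of Example \ref{lem_ex_t_even} attached to $U$, and maximality forces equality, giving the first type. Otherwise some $b_0$ has $\dim(b_0\cap U)<d-1$, which is exactly the extra hypothesis of Lemma \ref{lem_t_even_P_large2}; here I would not quote the lemma's numerical conclusion but the decomposition inside its proof, namely $Y=Y_1\cup Y_2$ with $Y_2=\{a\in Y:\dim(a\cap P)>d-4\}$ and $|Y_1|\le b_2^\even$. Because $b_2^\even=0$, the set $Y_1$ is empty, so $Y=Y_2$; and since $\dim P=d-3$, the defining condition of $Y_2$ reads precisely $P\subseteq a$, so all elements of $Y$ contain the fixed $(d-3)$-space $P$, again the second type.

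The main obstacle, and the point I would be most careful about, is that Theorem \ref{thm_max_t_even} classifies only sets strictly larger than $b_1^\even+b_2^\even$ and so cannot be applied as a black box to reach the second family; the structural conclusion must instead be extracted from the inner workings of Lemma \ref{lem_t_even_P_large2} (and, in the first case, from an elementary dimension count), where the identity $b_2^\even=0$ converts the inequality $|Y_1|\le b_2^\even$ into the exact statement that every generator passes through a common $(d-3)$-space. A minor point to check is the degenerate possibility $|Y|\le 2$, which is excluded because for $d\ge 4$ both candidate families already have more than two members.
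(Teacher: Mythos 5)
Your proof is correct and follows essentially the same route as the paper's: both exploit $\psi^\even=0$ for $t=2$, hence $b_2^\even=0$, to turn the bound $|Y_1|\le b_2^\even$ inside the proof of Lemma \ref{lem_t_even_P_large2} into $Y_1=\emptyset$, and then read the two structural outcomes off the case analysis in the proof of Theorem \ref{thm_max_t_even}. Your direct treatment of the case where every triple intersection has dimension $d-2$ (so that all elements contain the $(d-2)$-space $a_1\cap a_2$) is slightly more explicit than the paper's implicit reliance on Lemma \ref{lem_upperbound_1_t_even}, but the argument is the same in substance.
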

\begin{proof}
  In this case $b_2^\even = 0$, since
  \begin{align*}
    \psi^\even = 0.
  \end{align*}
  Hence, in the proof of Lemma \ref{lem_t_even_P_large2} $|Y_1| = 0$.
  Therefore either all elements of $Y$ contain the $(d-3)$-space $P$ or $Y$ is as in Example \ref{lem_ex_t_even} by the proof of Theorem \ref{thm_max_t_even}.
\end{proof}

\section{EKR Sets, $t$ odd}\label{sec_t_odd}

Throughout this section we assume that we work in a finite classical polar space of rank $d > 2$ and given type $\epsilon$. We assume throughout this section that $t\ge 3$ is an odd integer satisfying $d\ge 2t-1$. Recall that the case $t=1$ is covered by Lemma \ref{lem_d_t_eq_d_1}.

\begin{defi} Define the constants $b_1^\odd$, $b_2^\odd$, $b_3^\odd$ by
\begin{align*}
  b_1^\odd = &\gauss{d-\frac{3}{2}t+\frac{1}{2}}{(t-3)/2} c_{2t-2, t},\\
  b_2^\odd = & \omega(d, (t+1)/2) \psi^\odd,\\
  b_3^\odd = & q^{\frac{t-1}{2} \epsilon + \binom{(t-1)/2}{2}} \overline{\psi}^\odd.
\end{align*}
\end{defi}

\begin{lemma}\label{lem_bounds_t_odd}
Let $Y$ be a $(d, t)$-EKR set.
  \begin{enumerate}[(a)]
   \item Let $P$ be a totally isotropic subspace of dimension at least $d-\frac{3}{2}t+\frac{1}{2}$.
  If $\dim(c \cap P) \geq \dim(P) - \frac{t}{2} + \frac{3}{2}$ for all elements $c\in Y$, then $|Y|\le b_1^\odd$.
  \item Let $U$ be a totally isotropic subspace of dimension $d-1$, $P$ a subspace of $U$ of dimension $d - \frac{3}{2} t + \frac{1}{2}$, and $A$ a subspace of $P$ with $\dim(A) \geq d - 2t + 2$. If all elements $c\in Y$ satisfy $\dim(U \cap c) = d - \frac{t}{2} - \frac{1}{2}$, $\dim(c \cap P) = d - 2t + 1$, and $\dim(c \cap A) \geq \dim(A) - \frac{t}{2} + \frac{3}{2}$, then $|Y|\le b_2^\odd$.
  \item Let $G$ be a generator, and $P$ a subspace of $G$ of dimension $d - \frac{3}{2} t + \frac{1}{2}$.
  If all $c\in Y$ satisfy $\dim(G \cap c) = d - \frac{t}{2} + \frac{1}{2}$ and $\dim(c \cap P) = d - 2t + 1$, then $|Y|\le b_3^\odd$.\label{lem_bounds_t_odd_c}
  \end{enumerate}
\end{lemma}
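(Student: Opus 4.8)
The plan is to treat all three parts in parallel with the even case Lemma \ref{lem_bounds_t_even}, each time factoring an element $c\in Y$ through an intermediate subspace $T=c\cap(\,\cdot\,)$ and bounding separately the number of admissible intermediate subspaces and the number of $c$ producing a fixed one. For part (a), I would first replace $P$ by a subspace of dimension exactly $d-\frac32t+\frac12$, so that the hypothesis $\dim(c\cap P)\ge\dim(P)-\frac{t}{2}+\frac32$ becomes $\dim(c\cap P)\ge d-2t+2$. Every $c\in Y$ then contains a subspace $W\subseteq P$ of dimension $d-2t+2$, that is of codimension $\frac{t-3}{2}$ in $P$; by \eqref{gauss_duality} there are exactly $\gauss{d-\frac32t+\frac12}{(t-3)/2}$ such $W$. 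For a fixed $W$, the elements of $Y$ containing $W$ correspond, in the quotient geometry $W^\perp/W$ (a polar space of the same type and rank $d-\dim(W)=2t-2$), to generators pairwise meeting in codimension at most $t$, i.e. to a $(2t-2,t)$-EKR set, of which there are at most $c_{2t-2,t}$. Multiplying gives $|Y|\le b_1^\odd$.

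Part (b) uses the crudest of the three counts. After replacing $A$ by a subspace of dimension $d-2t+2$, the subspace $T:=c\cap U$ runs, as $c$ ranges over $Y$, through exactly the $z$-spaces of $U$ counted by $\psi^\odd$: the hypotheses fix $\dim T=d-\frac{t}{2}-\frac12$ and $\dim(T\cap P)=d-2t+1$, and force $\dim(T\cap A)$ into the range $\{d-\frac52t+\frac72,\dots,d-2t+1\}$ indexed by $i$ in the definition of $\psi^\odd$. Since here $U$ has dimension $d-1$ (it is not a generator), I do not attempt to count the $c$ with $c\cap U=T$ exactly; instead I bound their number simply by the total number of generators through the $(d-\frac{t+1}{2})$-space $T$, which by Lemma \ref{lem_basic_cnt_pgnq} is $\omega(d,(t+1)/2)$. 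This yields $|Y|\le\psi^\odd\,\omega(d,(t+1)/2)=b_2^\odd$.

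Part (c) mirrors the disjointness count of Lemma \ref{lem_bounds_t_even}(b). The intermediate subspace is now $T:=c\cap G$, a subspace of the generator $G$ of dimension $d-\frac{t}{2}+\frac12$ meeting $P$ in dimension $d-2t+1$; viewing $G$ as $\bbF_q^d$, the number of such $T$ is $\psi_2(d,d-\frac{t}{2}+\frac12,d-\frac32t+\frac12,d-2t+1)=\overline{\psi}^\odd$. For fixed $T$ the condition $\dim(G\cap c)=\dim T$ means precisely $c\cap G=T$, so in the quotient $T^\perp/T$ (a polar space of rank $d-\dim T=\frac{t-1}{2}$ in which $G/T$ is a generator) the generator $c/T$ is disjoint from $G/T$; by Corollary \ref{kor_dis_gens} there are exactly $q^{\frac{t-1}{2}\epsilon+\binom{(t-1)/2}{2}}$ such generators, giving $|Y|\le\overline{\psi}^\odd\,q^{\frac{t-1}{2}\epsilon+\binom{(t-1)/2}{2}}=b_3^\odd$.

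The genuinely routine content is already packaged in Lemma \ref{lem_basic_cnt_pgnq} and Corollary \ref{kor_dis_gens}, so the only thing to get right is the dimension bookkeeping: checking that each prescribed intersection dimension and each codimension of a quotient matches the rank and the Gaussian and $\psi$ indices in the definitions of $b_1^\odd$, $b_2^\odd$, $b_3^\odd$, and that the EKR condition $\dim(c_1\cap c_2)\ge d-t$ descends correctly to a $(2t-2,t)$-EKR condition in the quotient of part (a). I expect no real obstacle beyond this careful accounting; the one point deserving attention is the deliberate asymmetry between parts (b) and (c) — only in (c) is the intermediate object $G$ a full generator, which is why (b) must settle for counting all generators through $T$ via $\omega$ rather than only those disjoint from $U/T$.
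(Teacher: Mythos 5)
Your proposal is correct and follows essentially the same route as the paper's proof: in each part you factor $c\in Y$ through the intermediate subspace $c\cap P$, $c\cap U$, or $c\cap G$, count those intermediates via the Gaussian coefficient, $\psi^\odd$, or $\overline{\psi}^\odd$, and then bound the fibre by a quotient-geometry EKR set, by $\omega(d,(t+1)/2)$, or by the disjoint-generator count of Corollary \ref{kor_dis_gens}, exactly as the paper does. The dimension bookkeeping you describe (including the range of $\dim(T\cap A)$ in part (b) and the rank $\tfrac{t-1}{2}$ quotient in part (c)) matches the definitions of $b_1^\odd$, $b_2^\odd$, $b_3^\odd$.
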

\begin{proof}
 \begin{enumerate}[(a)]
  \item By replacing $P$ if necessary by a subspace of dimension $d - \frac{3}{2} t + \frac{1}{2}$, we may assume that $\dim(P) = d - \frac{3}{2} t + \frac{1}{2}$. Then the Gaussian coefficient in the definition of $b_1^\odd$ is the number of subspaces $U$ of $P$ of codimension $\frac{t}{2} - \frac{3}{2}$. By hypothesis, every element of $Y$ contains one such subspace $U$. The element of $Y$ on such a fixed subspace $U$ form a $(2t-2, t)$-EKR set in the quotient geometry on $U$.
  \item By replacing $A$ if necessary by a subspace of $A$ of dimension $d-2t+2$, we may assume that $\dim(A) = d -2t+2$. There are $\psi^\odd$ subspaces $T$ of $U$ with $\dim(T) = d - \frac{t}{2} - \frac{1}{2}$, $\dim(T \cap P) = d - 2 t + 1$, and $\dim(T \cap A) = \dim(A) - \frac{t}{2} + \frac{1}{2} + i$ with $i \in \{ 1, \ldots, t/2-3/2\}$.
  For each such $T$, there are exactly $\omega(d, (t+1)/2)$ generators $a$ with $a \cap U \supseteq T$.
  \item There are exactly $\overline{\psi}^\odd$ subspaces $T$ of $G$ with $\dim(T) = d - \frac{t}{2} + \frac{1}{2}$ and $\dim(T \cap P) = d - 2t + 1$. For each such $T$, there are exactly $q^{\frac{t-1}{2} \epsilon + \binom{(t-1)/2}{2}}$ generators $a$ with $T = a \cap G$, since it is well-known that $q^{\frac{t-1}{2} \epsilon + \binom{(t-1)/2}{2}}$ generators are disjoint to $G$ in the quotient geometry of $T$ (see for example Corollary \ref{kor_dis_gens}).
 \end{enumerate}
\end{proof}

\begin{lemma}\label{lem_collected_properties_t_odd}
 Let $Y$ be a $(d, t)$-EKR set, and consider $a_1, a_2, a_3 \in Y$. Then the following holds true.
 \begin{enumerate}[(a)]
  \item The dimension of $a_1 \cap a_2 \cap a_3$ is at least $d - \frac{3}{2} t + \frac{1}{2}$. \label{lem_intersection_odd_a}
  \item Suppose that equality holds in \eqref{lem_intersection_odd_a} and put $\ell_{ij} = a_i \cap a_j$ for different $i, j \in \{ 1, 2, 3\}$, $U := \langle \ell_{12}, \ell_{13}, \ell_{23} \rangle$, and $P = a_1 \cap a_2 \cap a_3$.
  Then one of the following cases occurs:
    \begin{enumerate}[1.]
    \item \begin{enumerate}[(i)]
      \item $\dim(U) = d-1$.
      \item $\dim(\ell_{ij}) = d-t$ for $1 \leq i < j \leq 3$.
      \item $a_i \cap U = \langle \ell_{ij}, \ell_{ik} \rangle$ for $\{ i,j,k\} = \{1, 2, 3\}$.
      \item $\dim(a_i \cap U) = d - \frac{t}{2} - \frac{1}{2}$ for $i\in \{1, 2, 3\}$.
      \item Every $b \in Y$ satisfies $\dim(b \cap P) \ge d - 2t + 1$ and equality implies that  $\dim(b \cap U) \ge d - \frac{t}{2} - \frac{1}{2}$.
    \end{enumerate}
    \item \begin{enumerate}[(i)]
      \item $\dim(U) = d$.
      \item $\dim(\ell_{ij}) = \dim(\ell_{ik}) = d-t$ and $\dim(\ell_{jk}) = d-t+1$ for some $\{ i,j,k\} = \{1, 2, 3\}$.
      \item $a_i \cap U = \langle \ell_{ij}, \ell_{ik} \rangle$ for $\{ i,j,k\} = \{1, 2, 3\}$.
      \item $\dim(a_j \cap U) = \dim(a_k \cap U) = d - \frac{t}{2} + \frac{1}{2}$, and $\dim(a_i \cap U) = d - \frac{t}{2} - \frac{1}{2}$ for some $\{ i,j,k\} = \{1, 2, 3\}$ (with the same order as in (ii)).
      \item Every $b \in Y$ satisfies $\dim(b \cap P) \ge d - 2t + 1$ and equality implies that  $\dim(b \cap U) \ge d - \frac{t}{2} - \frac{1}{2}$.
      Also, if $\dim(b \cap P) = d - 2t +1$ and $\dim(a_i\cap a_j)=d-t$, then $\dim(a_i \cap a_j \cap b) = d - \frac{3}{2}t + \frac{1}{2}$.
    \end{enumerate}
    \end{enumerate}
 \end{enumerate}
\end{lemma}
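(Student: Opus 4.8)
The plan is to mirror the even case (Lemma \ref{lem_collected_properties_t_even}), exploiting the integrality that odd $t$ forces, which is exactly what produces the two subcases.

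For part (a) I would first observe that $U=\langle \ell_{12},\ell_{13},\ell_{23}\rangle$ is totally isotropic: any two of $\ell_{12},\ell_{13},\ell_{23}$ lie in a common $a_i$, so $\dim(U)\le d$. Since $\ell_{12}\cap\ell_{13}=P$ and $\langle\ell_{12},\ell_{13}\rangle\cap\ell_{23}\subseteq a_1\cap\ell_{23}=P$, the dimension formula gives $\dim(U)=\dim(\ell_{12})+\dim(\ell_{13})+\dim(\ell_{23})-2\dim(P)\ge 3(d-t)-2\dim(P)$, using $\dim(\ell_{ij})\ge d-t$ because $Y$ is a $(d,t)$-EKR set. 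Hence $\dim(P)\ge d-\tfrac32 t$, and as $d-\tfrac32 t$ is a half-integer for odd $t$, integrality upgrades this to $\dim(P)\ge d-\tfrac32 t+\tfrac12$.

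For part (b), assuming equality $\dim(P)=d-\tfrac32 t+\tfrac12$, the same identity reads $\dim(U)=\dim(\ell_{12})+\dim(\ell_{13})+\dim(\ell_{23})-(2d-3t+1)$; the lower bound $\dim(U)\ge 3(d-t)-(2d-3t+1)=d-1$ together with $\dim(U)\le d$ forces $\dim(U)\in\{d-1,d\}$. The value of $\dim(U)$ then pins down the $\dim(\ell_{ij})$: if $\dim(U)=d-1$ all three equal $d-t$ (Case 1), while if $\dim(U)=d$ exactly one pair has dimension $d-t+1$ and the other two equal $d-t$ (Case 2), since the $\dim(\ell_{ij})$ sum to $\dim(U)+2\dim(P)$ and each is at least $d-t$. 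In either subcase I recover (iii) and (iv) by the routine count of the even case: $U=\langle a_i\cap U,\ell_{jk}\rangle$ with $a_i\cap U\cap\ell_{jk}=P$, so $\dim(a_i\cap U)=\dim(U)+\dim(P)-\dim(\ell_{jk})$, and $\langle\ell_{ij},\ell_{ik}\rangle\subseteq a_i\cap U$ already has the matching dimension $\dim(\ell_{ij})+\dim(\ell_{ik})-\dim(P)$, forcing equality.

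The shared assertion (v) is where the work lies. For $b\in Y$ I would choose a pair with $\dim(\ell_{ij})=d-t$ (available in both subcases) and note $\dim(b\cap\ell_{ij})\ge d-\tfrac32 t+\tfrac12$ by part (a) applied to $\{b,a_i,a_j\}$; since $b\cap P=(b\cap\ell_{ij})\cap P$ inside $\ell_{ij}$, the dimension formula gives $\dim(b\cap P)\ge 2(d-\tfrac32 t+\tfrac12)-(d-t)=d-2t+1$. If equality $\dim(b\cap P)=d-2t+1$ holds, set $\ell_{ij}'=b\cap\ell_{ij}$; the identities $\ell_{12}'\cap\ell_{13}'=b\cap P$ and $\langle\ell_{12}',\ell_{13}'\rangle\cap\ell_{23}'=b\cap P$ hold regardless of subcase, so $b\cap U\supseteq\langle\ell_{12}',\ell_{13}',\ell_{23}'\rangle$ has dimension at least $\sum_{}\dim(\ell_{ij}')-2\dim(b\cap P)\ge 3(d-\tfrac32 t+\tfrac12)-2(d-2t+1)=d-\tfrac t2-\tfrac12$. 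The extra clause of Case 2 is the reverse of the first estimate: inside a pair $\ell_{ij}$ of dimension $d-t$ one gets $\dim(b\cap\ell_{ij})\le \dim(b\cap P)+\dim(\ell_{ij})-\dim(P)=d-\tfrac32 t+\tfrac12$, so part (a) is attained for that pair.

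The main obstacle I expect is not any single estimate but keeping the two-subcase bookkeeping honest: the bound $\dim(b\cap P)\ge d-2t+1$ must be read off a pair of dimension exactly $d-t$, so one must verify such a pair always exists and that the subsequent span computation never secretly needs $\dim(U)=d$. Because the meet-and-span identities for the $\ell_{ij}'$ are insensitive to the subcase, the proof of (v) stays uniform, and the genuinely case-specific content is confined to the last sentence of Case 2(v).
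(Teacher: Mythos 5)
Your proposal is correct and follows essentially the same route as the paper: the same span/meet identities for part (a), the dichotomy $\dim(U)\in\{d-1,d\}$, and the same two dimension-formula estimates (on $\ell_{ij}$ and on $\langle \ell_{12}',\ell_{13}',\ell_{23}'\rangle$) for part (v), including reading the final clause of Case 2(v) off the reverse inequality. Your observation that $\sum_{i<j}\dim(\ell_{ij})=\dim(U)+2\dim(P)$ pins down statement (ii) is a clean way to supply the Case-2 details the paper declares ``similar and omitted.''
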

\begin{proof}
  \begin{enumerate}[(a)]
   \item \label{lemma_a123_a_prf}As $Y$ is a $(d, t)$-EKR set, then $\dim(\ell_{ij}) \geq d-t$. As $U$ is totally isotropic, then $\dim(U) \leq d$. Since $\ell_{12} \cap \ell_{13} = P$ as well as $\langle \ell_{12}, \ell_{13} \rangle \cap \ell_{23} = P$ (because $\langle \ell_{12}, \ell_{13} \rangle \cap \ell_{23} \subseteq a_1 \cap \ell_{23} = P$), then
   \begin{align*}
      d &\geq \dim(U) = \dim(\langle \ell_{12}, \ell_{13}, \ell_{23} \rangle)\\
      &= \dim( \langle \ell_{12}, \ell_{23} \rangle) + \dim(\ell_{23}) - \dim(P)\\
      &= \dim(\ell_{12}) + \dim(\ell_{13}) - \dim(P) + \dim(\ell_{23}) - \dim(P)\\
      &\geq 3(d-t) - 2\dim(P).
   \end{align*}
   Hence, $\dim(P) \geq d - \frac{3}{2}t + \frac{1}{2}$.
   \item As $\dim(P) = d - \frac{3}{2} t + \frac{1}{2}$, the argument in \eqref{lemma_a123_a_prf} shows that $\dim(U) \in \{ d-1, d\}$.

   Consider first the case that $\dim(U) = d-1$. Then the argument to prove the first part of the lemma yields $\dim(\ell_{ij}) = d-t$ for all $i\not=j$.
   For $\{ i, j, k\} = \{ 1, 2, 3\}$, we have $\ell_{ij}, \ell_{ik} \subseteq a_i \cap U$ and hence $U = \langle a_i \cap U, \ell_{jk} \rangle$.
   Also $a_i \cap U \cap \ell_{jk} = a_i \cap \ell_{jk} = P$. Hence,
   \begin{align*}
    d-1 &= \dim(U) = \dim(\langle a_i \cap U, \ell_{jk} \rangle)\\
    & = \dim(a_i \cap U) + \dim(\ell_{jk}) - \dim(P)\\
    &= \dim(a_i \cap U) + (d-t) - (d- \frac{3}{2}t + \frac{1}{2}).
   \end{align*}
   Therefore, $\dim(a_i \cap U) = d - \frac{t}{2} - \frac{1}{2}$, which implies that $a_i \cap U = \langle \ell_{ij}, \ell_{ik} \rangle$.
   We have proved the first four statements for the case $\dim(U)=d-1$. The arguments for these for the corresponding statements in the case $\dim(U)=d$ are similar and omitted. The final part is proved for both cases together.

   Consider $b \in Y$. We may assume that $\dim(a_1\cap a_2)=d-t$. It follows from the first statement of the lemma that $\ell_{ij}' := b \cap \ell_{ij}$, $1\le i<j\le 3$ has dimension at least $d-\frac{3}{2} t + \frac{1}{2}$. As $P$ and $\ell'_{12}=b \cap a_1 \cap a_2$ lie in $a_1 \cap a_2$, the dimension formula shows that
 \begin{align*}
  \dim(b \cap P) &= \dim(\ell'_{12} \cap P)\\
  &\geq \dim(\ell'_{12}) + \dim(P) - \dim(a_1 \cap a_2)\\
  &\geq 2(d-\frac{3}{2} t + \frac{1}{2})-(d-t)=d-2t+1,
 \end{align*}
 and equality implies that $\ell'_{12} = a_1 \cap a_2 \cap b$ has dimension $d-\frac{3}{2}t+\frac{1}{2}$.
 Suppose finally that  $\dim(b \cap P)=d-2t+1$. We have $\langle \ell_{12}', \ell_{13}'\rangle \cap \ell_{23}' \subseteq (a_1 \cap b) \cap \ell_{23} = b \cap P$ and $\ell_{12}' \cap \ell_{13}' = b \cap P$. This implies that
   \begin{align*}
      \dim(b \cap U) &\geq \dim( \langle \ell_{12}', \ell_{13}', \ell_{23}' \rangle)\\
      &\geq  \dim(\ell_{12}') + \dim(\ell_{13}') + \dim(\ell_{23}') - 2\dim(b \cap P)\\
      &\geq 3(d - \frac{3}{2}t + \frac{1}{2}) - 2\dim(b \cap P) =d - \frac{t}{2} - \frac{1}{2}.
   \end{align*}
  \end{enumerate}
\end{proof}

\begin{lemma}\label{lem_t_odd_P_large}
 If $\dim(a_1 \cap a_2 \cap a_3) > d - \frac{3}{2} t + \frac{1}{2}$ for all $a_1, a_2, a_3$ of a maximal $(d, t)$-EKR set $Y$, then $|Y| \leq b_1^\odd$.
\end{lemma}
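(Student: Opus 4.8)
The statement is the odd-$t$ analogue of Lemma \ref{lem_upperbound_1_t_even}. Let me understand what's being claimed and recall the proof strategy from the even case.

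The lemma says: if $\dim(a_1 \cap a_2 \cap a_3) > d - \frac{3}{2}t + \frac{1}{2}$ for all triples $a_1, a_2, a_3$ in a maximal $(d,t)$-EKR set $Y$, then $|Y| \leq b_1^{\odd}$.

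Let me think about this carefully.

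The plan is to follow the even-$t$ argument of Lemma \ref{lem_upperbound_1_t_even} almost verbatim, the only genuine change being the bookkeeping of the half-integer dimensions, and to reduce everything to Lemma \ref{lem_bounds_t_odd}(a). Concretely, the goal is to exhibit a single totally isotropic subspace $P$ of dimension at least $d-\frac{3}{2}t+\frac{1}{2}$ such that every element $c\in Y$ satisfies $\dim(c\cap P)\ge\dim(P)-\frac{t}{2}+\frac{3}{2}$; Lemma \ref{lem_bounds_t_odd}(a) then yields $|Y|\le b_1^\odd$ immediately.

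First I would produce the pair on which the whole argument rests. Since $Y$ is a maximal $(d,t)$-EKR set, Lemma \ref{lem_max_dmtp1_is_not_max_dmt_ekr_set} shows that $Y$ is not a $(d,t-1)$-EKR set, so there are $a_1,a_2\in Y$ with $\dim(a_1\cap a_2)=d-t$. Fixing any third element $a_3\in Y$ and setting $P:=a_1\cap a_2\cap a_3$, the space $P$ is totally isotropic as an intersection of generators. Applying the hypothesis of the lemma to the triple $a_1,a_2,a_3$ gives $\dim(P)>d-\frac{3}{2}t+\frac{1}{2}$, and since $t$ is odd the right-hand side is an integer, so in fact $\dim(P)\ge d-\frac{3}{2}t+\frac{3}{2}$; in particular $P$ meets the dimension requirement of Lemma \ref{lem_bounds_t_odd}(a).

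The key step is then to bound $\dim(b\cap P)$ for an arbitrary $b\in Y$. Applying the hypothesis to the triple $a_1,a_2,b$ gives $\dim(b\cap a_1\cap a_2)>d-\frac{3}{2}t+\frac{1}{2}$, hence $\dim(b\cap a_1\cap a_2)\ge d-\frac{3}{2}t+\frac{3}{2}$. As both $P$ and $b\cap a_1\cap a_2$ lie in $a_1\cap a_2$, which has dimension $d-t$, the dimension formula gives
\begin{align*}
 \dim(b\cap P)&=\dim\big((b\cap a_1\cap a_2)\cap P\big)\\
 &\ge \dim(b\cap a_1\cap a_2)+\dim(P)-\dim(a_1\cap a_2)\\
 &\ge \dim(P)-\frac{t}{2}+\frac{3}{2}.
\end{align*}
This is exactly the condition required in Lemma \ref{lem_bounds_t_odd}(a), which therefore gives $|Y|\le b_1^\odd$.

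I do not expect a genuine obstacle. Unlike Lemma \ref{lem_collected_properties_t_odd}, this argument never touches the delicate split between $\dim(U)=d-1$ and $\dim(U)=d$, since the hypothesis forces all triple intersections to strictly exceed the generic dimension and the entire computation takes place inside the fixed $(d-t)$-space $a_1\cap a_2$. The only points needing care are the half-integer arithmetic — checking that the strict inequality $>d-\frac{3}{2}t+\frac{1}{2}$ upgrades to $\ge d-\frac{3}{2}t+\frac{3}{2}$, which is precisely where the oddness of $t$ enters — and the degenerate case $|Y|\le 2$, which is dispatched directly since $c_{2t-2,t}\ge 2$ forces $b_1^\odd\ge 2$.
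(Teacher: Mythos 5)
Your proof is correct and follows essentially the same route as the paper's: both obtain $a_1,a_2$ with $\dim(a_1\cap a_2)=d-t$ from Lemma \ref{lem_max_dmtp1_is_not_max_dmt_ekr_set}, set $P:=a_1\cap a_2\cap a_3$, apply the hypothesis to the triple $a_1,a_2,b$ together with the dimension formula inside $a_1\cap a_2$ to get $\dim(b\cap P)\ge\dim(P)-\frac{t}{2}+\frac{3}{2}$, and conclude via Lemma \ref{lem_bounds_t_odd}(a). The extra remarks on the half-integer upgrade and the degenerate case are harmless additions not present in (and not needed by) the paper's argument.
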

\begin{proof}
 Lemma \ref{lem_max_dmtp1_is_not_max_dmt_ekr_set} gives $a_1, a_2 \in Y$ with $\dim(a_1 \cap a_2) = d-t$.
 Consider a third element $a_3 \in Y$ and put $P := a_1 \cap a_2 \cap a_3$. Consider any element $b \in Y$.
 By hypothesis, $\dim(b \cap a_1 \cap a_2) \geq d - \frac{3}{2}t + \frac{3}{2}$.
 As $P$ and $b \cap a_1 \cap a_2$ lie in $a_1 \cap a_2$, the dimension formula shows that
 \begin{align*}
  \dim(b \cap P) &= \dim((b \cap a_1 \cap a_2) \cap P)\\
  &\geq  \dim(b \cap a_1 \cap a_2) + \dim(P) - \dim(a_1 \cap a_2)\\
  &\geq \dim(P) - \frac{t}{2} + \frac{3}{2}.
 \end{align*}
  As $\dim(P) > d - \frac{3}{2} t - \frac{1}{2}$ (by hypothesis), Lemma \ref{lem_bounds_t_odd} proves the assertion.
\end{proof}

\begin{lemma}\label{lem_t_odd_P_large2}
 Let $Y$ be a $(d, t)$-EKR set such that there exist a generator $G_0$, a $(d-1)$-space $U \subseteq G_0$, and $(d - \frac{3}{2}t + \frac{1}{2})$-spaces $P, Q \subseteq U$ such that $a \in Y$ implies the following:
 \begin{align*}
  &\dim(a \cap U) \geq d - \frac{t}{2} - \frac{1}{2} \text{ or }\\
  &\dim(a \cap P) > d - 2t + 1 \text{ or } \dim(a \cap Q) > d - 2t + 1.
 \end{align*}
 Suppose also that $\dim( a \cap U) < d -\frac{t}{2} - \frac{1}{2}$ for at least one element $a$ of $Y$. Then $|Y| \leq 2b_1^\odd + b_2^\odd + b_3^\odd$.
\end{lemma}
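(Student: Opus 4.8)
The plan is to imitate the proof of Lemma~\ref{lem_t_even_P_large2}, splitting $Y$ into finitely many pieces each controlled by one part of Lemma~\ref{lem_bounds_t_odd}. First I would set
\[
  Y_P = \{a \in Y : \dim(a \cap P) > d - 2t + 1\}, \qquad Y_Q = \{a \in Y : \dim(a \cap Q) > d - 2t + 1\},
\]
and $Y_0 = Y \setminus (Y_P \cup Y_Q)$. Since $\dim(P) = \dim(Q) = d - \frac{3}{2}t + \frac12$, the threshold $d - 2t + 2$ equals $\dim(P) - \frac t2 + \frac32$, so Lemma~\ref{lem_bounds_t_odd}(a) applies to each of $Y_P$ and $Y_Q$ and gives $|Y_P|, |Y_Q| \le b_1^\odd$, which accounts for the term $2 b_1^\odd$.

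Next I would pin down the dimensions on $Y_0$. For $a \in Y_0$ the hypothesis forces $\dim(a \cap U) \ge d - \frac t2 - \frac12$; combining this with $P \subseteq U$ and the dimension formula inside the $(d-1)$-space $U$ gives $\dim(a \cap P) \ge (d - \frac t2 - \frac12) + (d - \frac{3}{2}t + \frac12) - (d-1) = d - 2t + 1$, and since $a \notin Y_P$ equality holds. The same inequality shows that in fact $\dim(a \cap U) = d - \frac t2 - \frac12$ exactly (a larger value would force $\dim(a \cap P) > d - 2t + 1$), and likewise $\dim(a \cap Q) = d - 2t + 1$. As $U$ is a hyperplane of the generator $G_0$, we have $\dim(a \cap G_0) \in \{d - \frac t2 - \frac12,\, d - \frac t2 + \frac12\}$, so I split $Y_0 = Y_0' \cup Y_0''$ according to which value occurs, where $Y_0'$ consists of the elements with $a \cap G_0 = a \cap U$. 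For $a \in Y_0''$ we have $\dim(a \cap G_0) = d - \frac t2 + \frac12$ and $\dim(a \cap P) = d - 2t + 1$ with $P \subseteq G_0$, so Lemma~\ref{lem_bounds_t_odd}(c) with $G = G_0$ yields $|Y_0''| \le b_3^\odd$.

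It remains to bound $Y_0'$ by $b_2^\odd$ using Lemma~\ref{lem_bounds_t_odd}(b). By hypothesis there is an element $a^*$ with $\dim(a^* \cap U) < d - \frac t2 - \frac12$, and then $\dim(a^* \cap P) > d - 2t + 1$ or $\dim(a^* \cap Q) > d - 2t + 1$; since every $a \in Y_0$ meets both $P$ and $Q$ in dimension $d-2t+1$, I may interchange $P$ and $Q$ and assume the former, and set $A := a^* \cap P$, so $\dim(A) \ge d - 2t + 2$. For $a_1 \in Y_0'$ a short computation (exactly as in Lemma~\ref{lem_t_even_P_large2}) shows that the required inequality $\dim(a_1 \cap A) \ge \dim(A) - \frac t2 + \frac32$ is equivalent to $\langle a_1 \cap P, a^* \cap P\rangle$ being a proper subspace of $P$; granting this for every $a_1 \in Y_0'$, Lemma~\ref{lem_bounds_t_odd}(b) gives $|Y_0'| \le b_2^\odd$, and therefore $|Y| \le 2 b_1^\odd + b_2^\odd + b_3^\odd$.

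The crux, and the step I expect to be the main obstacle, is this proper-subspace claim. Arguing as in the even case, if $\langle a_1 \cap P, a^* \cap P\rangle = P$ then the dimension identities force $U = \langle a_1 \cap U, a^* \cap P\rangle$, and since $a_1 \cap a^*$ is perpendicular to both $a_1 \cap U$ and $a^* \cap P$ we get $a_1 \cap a^* \subseteq U^\perp$. In Lemma~\ref{lem_t_even_P_large2} one concludes immediately that $a_1 \cap a^* \subseteq U$ because there $U$ is a generator; here $U$ is only a hyperplane of the generator $G_0$, so that shortcut fails. Instead I would use that $U^\perp/U$ is a polar space of rank~$1$, whose totally isotropic subspaces are points, giving $\dim(a_1 \cap a^* \cap U) \ge \dim(a_1 \cap a^*) - 1$. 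If $a_1 \cap a^* \subseteq U$, the even-case computation runs unchanged and yields $\dim(a^* \cap U) \ge \dim(a_1 \cap a^*) + \frac{t-1}{2} \ge d - \frac t2 - \frac12$, contradicting the choice of $a^*$. The genuinely harder, residual case is $a_1 \cap a^* \not\subseteq U$, where the same computation only gives $\dim(a^* \cap U) \ge d - \frac t2 - \frac32$, forcing $\dim(a_1 \cap a^*) = d-t$ and $\dim(a^*\cap U) = d - \frac t2 - \frac32$ to be tight; then the extra point of $a_1 \cap a^*$ spans a second generator $\langle U, a_1 \cap a^*\rangle \ne G_0$ through $U$ (distinct from $G_0$ because $a_1 \cap G_0 = a_1 \cap U$ on $Y_0'$). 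I expect to have to eliminate this rigid configuration by bringing in the second subspace $Q$ together with $G_0$, and resolving this tight boundary case is where the real work lies.
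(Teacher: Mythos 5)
Your decomposition into $Y_P$, $Y_Q$, $Y_0$ and the bound $|Y_P|,|Y_Q|\le b_1^\odd$ match the paper, as does the determination of the exact dimensions $\dim(a\cap U)=d-\frac t2-\frac12$ and $\dim(a\cap P)=\dim(a\cap Q)=d-2t+1$ on $Y_0$. The gap is in the last step, and it is not a boundary case to be ``eliminated'': the rigid configuration you isolate (where $\langle a_1\cap P, a^*\cap P\rangle=P$, forcing $\dim(a_1\cap a^*)=d-t$, $\dim(a^*\cap U)=d-\frac t2-\frac32$, and $\langle U, a_1\cap a^*\rangle$ a generator) genuinely occurs and cannot be ruled out by bringing in $Q$ or $G_0$. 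The paper's proof does not try to exclude it; it splits $Y_0$ (the paper's $Y_1$) precisely by whether $\langle a_1\cap P, a^*\cap P\rangle$ equals $P$, bounds the ``proper subspace'' part by $b_2^\odd$ exactly as you do, and bounds the ``equals $P$'' part by $b_3^\odd$ by observing that every such $a_1$ meets the \emph{fixed} generator $G=\langle U, U^\perp\cap a^*\rangle$ in dimension $d-\frac t2+\frac12$ (and $P$ in dimension $d-2t+1$), so Lemma \ref{lem_bounds_t_odd}\,(c) applies with this $G$. The key point your sketch misses is that $G$ is independent of $a_1$, which is what makes part (c) usable; $b_3^\odd$ exists in the statement precisely to pay for this set.

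Your preliminary split of $Y_0$ into $Y_0'$ and $Y_0''$ by the value of $\dim(a\cap G_0)$ then actively breaks the accounting: you spend the single $b_3^\odd$ term on $Y_0''$ (elements meeting $G_0$ in dimension $d-\frac t2+\frac12$), but the residual elements of $Y_0'$ with $\langle a_1\cap P, a^*\cap P\rangle=P$ meet a generator $G\ne G_0$ in that dimension (indeed $G\ne G_0$ because $\dim(a_1\cap G)\ge d-\frac t2+\frac12>\dim(a_1\cap G_0)$ for $a_1\in Y_0'$), so they would require a second $b_3^\odd$, giving $2b_1^\odd+b_2^\odd+2b_3^\odd$ rather than the claimed bound. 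The fix is to drop the $G_0$-based split entirely --- $G_0$ plays no role in the paper's proof beyond guaranteeing that $U$ is totally isotropic --- and split $Y_0$ only by the span condition relative to $a^*\cap P$.
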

\begin{proof}
  Define
  \begin{align*}
    &Y_1 := \{ a \in Y: \dim(a \cap U) = d-\frac{t}{2}-\frac{1}{2}, ~\dim(a \cap P) = \dim(a \cap Q) = d-2t+1 \},\\
    &Y_2 := \{ a \in Y: \dim(a \cap P) > d-2t+1 \},\\
    &Y_3 := \{ a \in Y: \dim(a \cap Q) > d-2t+1 \}.
  \end{align*}
  We claim first that $Y = Y_1 \cup Y_2 \cup Y_3$. To see this, let $a\in Y$. Then
 \begin{align*}
 \dim(a\cap U)&=\dim(\langle a \cap U, P \rangle)+\dim(a\cap P)-\dim(P)
 \\
   &\leq \dim(U)+\dim(a\cap P)-\dim(P)
 \\
   &\leq \dim(a\cap P)+\frac32t-\frac32
 \end{align*}
 and similarly $ \dim(a\cap U)\le \dim(a\cap Q)+\frac32t-\frac32$. Hence, if $a\notin Y_2\cup Y_3$, then the hypothesis of the lemma implies that $a\in Y_1$. Thus we have proved that $Y = Y_1 \cup Y_2 \cup Y_3$.

 Lemma \ref{lem_bounds_t_odd} gives $|Y_2|+|Y_3| \leq 2b_1^\odd$.

  By hypothesis, there exists an element $a_2 \in Y_2\cup Y_3$ with $\dim(a_2 \cap U) \leq d- \frac{t}{2} - \frac{3}{2}$.
  By symmetry, we may assume that $a\in Y_2$. Define the following two subsets of $Y_1$:
  \begin{align*}
    &S := \{ a_1 \in Y_1: \langle a_1 \cap P, a_2 \cap P \rangle = P \}\\
    &T := \{ a_1 \in Y_1: \langle a_1 \cap P, a_2 \cap P \rangle \neq P \}.
  \end{align*}
  In the following, we use Lemma \ref{lem_bounds_t_odd} to show that $|S| \leq b_3^\odd$ and $|T| \leq b_2^\odd$.

  Let $a_1 \in S$.
  As $\dim(U) - \dim(a_1 \cap U) = \frac{t}{2} - \frac{1}{2} = \dim(P) - \dim(a_1 \cap P)$
  we have $U = \langle a_1 \cap U, P \rangle$. As $P = \langle a_1 \cap P, a_2 \cap P \rangle$
  this implies that $U = \langle a_1 \cap U, a_2 \cap P\rangle$ and hence
  $U = \langle a_1 \cap U, a_2 \cap U\rangle$. Therefore every point of $a_1\cap a_2$ lies in $U^\bot$ and
  \begin{align*}
   \dim(a_1\cap a_2\cap U)&=\dim(a_1\cap U)+\dim(a_2\cap U)-\dim(U)\\
   &\le (d- \frac{t}{2} - \frac{1}{2})+(d- \frac{t}{2} - \frac{3}{2})-(d-1) \\
   &=d-t-1.
  \end{align*}
   By $a_1, a_2 \in Y$, we have $\dim(a_1\cap a_2)\ge d-t$.
   As $U$ has dimension $d-1$, it follows that $U$ and $a_1\cap a_2$ span a generator,
   which implies that $\dim(a_1\cap a_2)= d-t$ and $\dim(a_1\cap a_2\cap U)=d-t-1$,
   which in turn shows that $\dim(a_2 \cap U) = d-\frac{t}{2}-\frac{3}{2}$.
   Then $\dim(a_1\cap G)=\dim(a_1\cap U)+1=d-\frac{t}{2}+\frac{1}{2}$.
   Clearly, $G=\langle U,U^\perp\cap a_2\rangle$ and thus $G$ is independent of the
   choice of $a_1\in S$. Hence every element of $S$ meets $G$ in a subspace of dimension $d-\frac{t}{2}+\frac{1}{2}$.
   Recall $S \subseteq Y_1$, so $\dim(a_1 \cap P) = d-2t+1$.
  Applying the third part of Lemma \ref{lem_bounds_t_odd} now gives
  \begin{align*}
    |S| \leq b_3^\odd.
  \end{align*}

  For $a_1 \in T$ we have that $\langle a_1 \cap P, a_2 \cap P \rangle$ is a proper
  subspace of $P$ and thus we can improve the previous estimate to
  $\dim(a_1 \cap a_2 \cap P) \geq \dim(a_2 \cap P) - \frac{t}{2} + \frac{3}{2}$.
  Then the second part of Lemma  \ref{lem_bounds_t_odd} again applied with $A = a_2 \cap P$
  gives $|T|\le b_2^\odd$. Hence $|Y| \le |Y_2|+|Y_3|+|S|+|T| \le  2b_1^\odd + b_2^\odd + b_3^\odd$.
\end{proof}

\begin{beispiel}\label{lem_ex_t_odd}
 The set consisting of all generators which meet a given $(d-1)$-space $U$ in a subspace of dimension at least $d - \frac{t}{2} - \frac{1}{2}$ is a maximal $(d, t)$-EKR set.
\end{beispiel}
\begin{proof}
 As the given subspace $U$ has dimension $d-1$, the dimension formula shows that
 the meet any of two elements of $Y$ has dimension at least $d-t$, thus $Y$ is a $(d, t)$-EKR set.
 Consider any generator $T$ with $\dim(U \cap T) < d- \frac{t}{2} - \frac{1}{2}$.
 Then $G$ has a subspace $R$ of dimension $d - \frac{t}{2}-\frac{1}{2}$ such that $\dim(R \cap G \cap T) < d-t$.
 The subspace $T' := \langle R, R^\bot \cap T \rangle$ is a generator on $R$ and in the quotient on $R$ one sees that there exists a generator $T''$ on $R$ with $T'' \cap T' = R$.
 Then $T'' \cap T = R \cap G \cap T$ and hence $\dim(T \cap T'') < d-t$. As $T'' \in Y$, this shows that $Y \cup \{ T \}$ is not a $(d, t)$-EKR set.
\end{proof}

We write $P_{ijk}$ for $a_i \cap a_j \cap a_k$ and $U_{ijk}$ for $\langle a_i \cap a_j, a_i \cap a_k, a_j \cap a_k \rangle$
in the remaining parts of this section.
Hereby we are allowed to substitute $i$, $j$, or $k$ with other symbols.
This is a purely formal convention. Each string $P_{ijk}$ or $U_{ijk}$ is only
an expression if $a_i$, $a_j$, and $a_k$ are appropriately defined.

\begin{lemma}\label{lem_t_odd_intersection_123_124}
  Let $Y$ be a $(d, t)$-EKR set.
  Let $a_1, a_2, a_3, a_4 \in Y$.
  Suppose that we have
  \begin{align*}
    \dim(P_{123}) = \dim(P_{124}) = d - \frac{3}{2} t + \frac{1}{2},
  \end{align*}
  $\dim(a_4 \cap P_{123}) = d-2t+1$, and $\dim(a_1 \cap a_2) = d-t$.
  Let $U = U_{123} \cap U_{124}$. Then
  \begin{align*}
    \dim(U) \geq d-1 \text{ and } \dim(a_4 \cap U) \geq \dim(U) - \frac{t}{2} + \frac{1}{2}.
  \end{align*}
\end{lemma}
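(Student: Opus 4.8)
The plan is to work inside the common ambient space and exploit the already-established structure of $U_{123}$ and $U_{124}$ from Lemma \ref{lem_collected_properties_t_odd}. Since $\dim(P_{123}) = \dim(P_{124}) = d - \frac{3}{2}t + \frac{1}{2}$ and $\dim(a_1\cap a_2) = d-t$, the triples $(a_1,a_2,a_3)$ and $(a_1,a_2,a_4)$ each satisfy the equality hypothesis of Lemma \ref{lem_collected_properties_t_odd}(b). In both configurations the line $\ell_{12} = a_1\cap a_2$ has dimension exactly $d-t$, and both $U_{123}$ and $U_{124}$ contain $\ell_{12}$ and the respective traces on $a_1$ and $a_2$. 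The key preliminary observation is that $a_i\cap U_{123} = \langle \ell_{12}, \ell_{13}\rangle$ and $a_i\cap U_{124} = \langle \ell_{12}, \ell_{14}\rangle$ for $i=1,2$, so both $U_{123}$ and $U_{124}$ contain the $(d-\frac{t}{2}-\frac{1}{2})$-dimensional subspaces $a_1\cap U_{123}$, $a_1\cap U_{124}$, etc.

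\textbf{First} I would pin down $\dim(U)$. Both $U_{123}$ and $U_{124}$ have dimension $d-1$ or $d$ by Lemma \ref{lem_collected_properties_t_odd}(b), and both contain $\langle \ell_{12}, a_1\cap a_2\rangle = \ell_{12}$ together with the traces of $a_1$ and $a_2$. In fact $U_{123}$ and $U_{124}$ share the whole subspace $W := \langle a_1\cap U_{123}, a_2\cap U_{123}\rangle \cap \langle a_1\cap U_{124}, a_2\cap U_{124}\rangle$, which already contains $\langle \ell_{12}, \ell_{13}\cap\ell_{14},\ldots\rangle$; the cleanest route is to show $\langle a_1\cap a_2, (a_1\cap a_3)\cap(a_1\cap a_4), \ldots\rangle$ forces a large intersection. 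More directly: $a_1\cap U = a_1\cap U_{123}\cap U_{124} \supseteq (a_1\cap U_{123})\cap(a_1\cap U_{124})$, and since both of these are $(d-\frac{t}{2}-\frac{1}{2})$-spaces inside the $(d-1)$-space $a_1$ (or the generator $a_1$), the dimension formula gives $\dim(a_1\cap U)\ge 2(d-\frac{t}{2}-\frac{1}{2}) - d$. Controlling the upper ambient dimension via $U_{123}, U_{124}\le d$ then yields $\dim(U)\ge d-1$; I would handle the cases $\dim(U_{123}),\dim(U_{124})\in\{d-1,d\}$ separately, using that $a_1\cap a_2$ has the predicted codimension so that $U_{123}+U_{124}$ cannot exceed the ambient rank-$d$ constraint.

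\textbf{Second}, for the bound on $\dim(a_4\cap U)$, I would use the hypothesis $\dim(a_4\cap P_{123}) = d-2t+1$, which is exactly the equality case in the last part of Lemma \ref{lem_collected_properties_t_odd}(b) applied to $(a_1,a_2,a_3)$ with $b=a_4$. That equality forces $\dim(a_4\cap U_{123})\ge d-\frac{t}{2}-\frac{1}{2}$ and moreover, via the statement "$\dim(a_i\cap a_j\cap b) = d-\frac{3}{2}t+\frac{1}{2}$" in case 2, pins down the trace of $a_4$ on $\ell_{12}$. On the other side, $a_4\cap U_{124}\supseteq \langle a_4\cap a_1, a_4\cap a_2\rangle$ has dimension at least $d-\frac{t}{2}-\frac{1}{2}$ because $P_{124}$ has the minimal dimension. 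I would then intersect: $a_4\cap U = (a_4\cap U_{123})\cap(a_4\cap U_{124})$, and since both traces are large subspaces of $a_4$ containing the common $a_4\cap\ell_{12}$, the dimension formula inside $a_4$ gives the stated lower bound $\dim(a_4\cap U)\ge \dim(U) - \frac{t}{2} + \frac{1}{2}$.

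\textbf{The main obstacle} is bookkeeping the case split on $\dim(U_{123}),\dim(U_{124})\in\{d-1,d\}$ while simultaneously tracking which of $a_1,a_2,a_3$ (resp.\ $a_1,a_2,a_4$) plays the distinguished role "$i$" in case 2 of Lemma \ref{lem_collected_properties_t_odd}; the hypothesis $\dim(a_1\cap a_2) = d-t$ should force $\ell_{12}$ to be a "small" line in both configurations, ruling out the asymmetric case 2 assignments that would place $a_1$ or $a_2$ as the special index, but verifying this compatibility is the delicate point. Once the traces $a_1\cap U$, $a_4\cap U$ are identified as intersections of two explicit $(d-\frac{t}{2}-\frac{1}{2})$-spaces sharing $\ell_{12}$ (resp.\ $a_4\cap\ell_{12}$), the dimension counts are routine applications of the submodularity of dimension; the real content is ensuring that $U_{123}$ and $U_{124}$ genuinely overlap in codimension at most one, so that $U$ does not collapse below $d-1$.
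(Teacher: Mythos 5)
There are genuine gaps in both halves of your argument, and in each case the step you commit to loses too many dimensions. For $\dim(U)\ge d-1$: the route you actually follow, $\dim(a_1\cap U)\ge \dim(a_1\cap U_{123})+\dim(a_1\cap U_{124})-\dim(a_1)=2(d-\tfrac{t}{2}-\tfrac{1}{2})-d=d-t-1$, is far too weak, and the proposed repair of ``controlling the upper ambient dimension via $U_{123},U_{124}\le d$'' cannot be completed: the span $\langle U_{123},U_{124}\rangle$ is in general not totally isotropic (a point of $\ell_{13}\subseteq a_1\cap a_3$ and a point of $\ell_{24}\subseteq a_2\cap a_4$ need not be perpendicular), so there is no bound of $d$ on that span and hence no useful lower bound on $\dim(U_{123}\cap U_{124})$ from the dimension formula. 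The route you mention in passing and call ``cleanest'' --- that $U$ contains $\langle a_1\cap a_2,\,P_{134},\,P_{234}\rangle$ --- is exactly the paper's proof, but you never carry out the count; the essential inputs you would need are $\dim(P_{134}),\dim(P_{234})\ge d-\tfrac{3}{2}t+\tfrac{1}{2}$ from Lemma \ref{lem_collected_properties_t_odd}(a) together with the fact that all the relevant pairwise intersections lie in $a_1\cap a_2\cap a_3\cap a_4=a_4\cap P_{123}$, whose dimension is exactly $d-2t+1$ by hypothesis; this gives $(d-t)+2(d-\tfrac{3}{2}t+\tfrac{1}{2})-2(d-2t+1)=d-1$.

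For the second claim the same quantitative problem recurs: intersecting $a_4\cap U_{123}$ and $a_4\cap U_{124}$ inside the $d$-dimensional generator $a_4$ yields only $\dim(a_4\cap U)\ge 2(d-\tfrac{t}{2}-\tfrac{1}{2})-d=d-t-1$, and even the observation that both traces contain $a_4\cap\ell_{12}=P_{124}$ (dimension $d-\tfrac{3}{2}t+\tfrac{1}{2}$) only improves this to $d-\tfrac{3}{2}t+\tfrac{1}{2}$, which is still short of the required $d-\tfrac{t}{2}-\tfrac{1}{2}$ by $t-1$. To get the stated bound from your intersection one would need $\langle a_4\cap U_{123},\,a_4\cap U_{124}\rangle$ to have dimension $d-\tfrac{t}{2}-\tfrac{1}{2}$, i.e.\ the two traces to coincide, which is not justified. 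The paper avoids this entirely: since $\dim(U)\ge d-1$ and $U\subseteq U_{124}$ with $\dim(U_{124})\le d$, the subspace $U$ has codimension at most one in $U_{124}$, so $a_4\cap U\supseteq (a_4\cap U_{124})\cap U$ loses at most one dimension from $a_4\cap U_{124}$; combined with the case split $\dim(U_{124})\in\{d-1,d\}$ --- where in the second case the hypothesis $\dim(a_1\cap a_2)=d-t$ forces $a_4$ to be a non-special index in case 2 of Lemma \ref{lem_collected_properties_t_odd}(b), whence $\dim(a_4\cap U_{124})=d-\tfrac{t}{2}+\tfrac{1}{2}$ --- this yields exactly $\dim(a_4\cap U)\ge\dim(U)-\tfrac{t}{2}+\tfrac{1}{2}$. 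Your instinct to track which index is ``special'' via $\dim(a_1\cap a_2)=d-t$ is the right one, but it must be fed into the codimension-one argument, not into an intersection of the two traces inside $a_4$.
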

\begin{proof}
  By Lemma \ref{lem_collected_properties_t_odd} (a), $\dim(P_{ijk}) \geq d - \frac{3}{2} t + \frac{1}{2}$.
  Hence,
  \begin{align*}
   \dim(U) &\geq \dim(\langle a_1\cap a_2,a_1\cap a_3\cap a_4,a_2\cap a_3\cap a_4 \rangle) \\
   &\geq \dim(a_1\cap a_2)+\dim(\langle a_1\cap a_3\cap a_4,a_2\cap a_3\cap a_4\rangle)  -\dim(a_4\cap P_{123})\\
   &= (d-t) + \dim(a_1\cap a_3\cap a_4) + \dim(a_2\cap a_3\cap a_4) - 2\dim(a_4\cap P_{123})\\
   &\geq (d-t) + 2(d - \frac{3}{2} t + \frac{1}{2}) -2\dim(a_4\cap P_{123}) = d-1.
  \end{align*}
  This shows the first part of the assertion.

   If $\dim(U_{124}) = d-1$, then the claim follows by Lemma \ref{lem_collected_properties_t_odd} (b) 1.(iv).
   Hence suppose $\dim(U_{124}) = d$.
   Then Lemma \ref{lem_collected_properties_t_odd} (b) 2.(ii) shows that
    $\dim(a_1 \cap a_4) + \dim(a_2 \cap a_4) = 2d-2t+1$.
    Then, by Lemma \ref{lem_collected_properties_t_odd} (b) 2.(iv), $\dim(a_4 \cap U_{124}) = d - \frac{t}{2} + \frac{1}{2}$.
    Hence, $\dim(a_4 \cap U) \geq d - \frac{t}{2} - \frac{1}{2}$.
\end{proof}

\begin{lemma}\label{lem_t_odd_funny_case}
 If for all $a_1, a_2, a_3$ of a maximal $(d, t)$-EKR set $Y$, which is not as in Example \ref{lem_ex_t_odd},
 \begin{align*}
  \dim(a_1 \cap a_2 \cap a_3) = d - \frac{3}{2} t + \frac{1}{2}
 \end{align*}
 implies $\dim(U_{123}) = d$, then $|Y| \leq 2 b_1^\odd + b_2^\odd + b_3^\odd$.
\end{lemma}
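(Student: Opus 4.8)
We have a maximal $(d,t)$-EKR set $Y$ with the property that whenever three elements $a_1,a_2,a_3$ meet in exactly $d-\frac{3}{2}t+\frac{1}{2}$ dimensions, the span $U_{123}$ of the three pairwise intersections has dimension $d$ (the "generator case" $\dim(U)=d-1$ never occurs). We want to conclude $|Y|\le 2b_1^\odd+b_2^\odd+b_3^\odd$. The goal is clearly to maneuver into a position where Lemma~\ref{lem_t_odd_P_large2} applies, since that lemma delivers exactly this bound. So the whole game is to produce a $(d-1)$-space $U$ and two $(d-\frac{3}{2}t+\frac{1}{2})$-spaces $P,Q\subseteq U$ satisfying the dichotomy hypothesis of that lemma.

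Let me think about how to build these objects.

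The input we are handed is the structural Lemma~\ref{lem_collected_properties_t_odd}, which in the $\dim(U_{123})=d$ case (Case 2) gives very precise dimension data: two of the pairwise intersections have dimension $d-t$ and one has dimension $d-t+1$, and correspondingly two of the $a_i\cap U_{123}$ have dimension $d-\frac{t}{2}+\frac{1}{2}$ while one has dimension $d-\frac{t}{2}-\frac{1}{2}$.

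The plan is to massage the hypotheses into a position where Lemma~\ref{lem_t_odd_P_large2} applies and delivers exactly the bound $2b_1^\odd+b_2^\odd+b_3^\odd$. First I would dispose of the easy extreme: if no triple $a_1,a_2,a_3\in Y$ satisfies $\dim(P_{123})=d-\frac{3}{2}t+\frac{1}{2}$, then Lemma~\ref{lem_t_odd_P_large} already gives $|Y|\le b_1^\odd$, which is within the claimed bound. So I may fix a triple with $\dim(P_{123})=d-\frac{3}{2}t+\frac{1}{2}$; by the hypothesis of the lemma this triple falls in Case~2 of Lemma~\ref{lem_collected_properties_t_odd}, so $U_{123}$ is a totally isotropic $d$-space, i.e.\ a generator, and after relabelling I may assume that $a_1\cap a_2$ is one of the two pairwise intersections of dimension $d-t$.

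Next I would produce a second triple sharing the subspace $a_1\cap a_2$. If every $b\in Y$ meets $P_{123}$ in dimension strictly larger than $d-2t+1$, then $\dim(b\cap P_{123})\ge d-2t+2=\dim(P_{123})-\frac{t}{2}+\frac{3}{2}$, and Lemma~\ref{lem_bounds_t_odd}\,(a) again gives $|Y|\le b_1^\odd$. Otherwise I pick $a_4\in Y$ with $\dim(a_4\cap P_{123})=d-2t+1$. The final assertion of Lemma~\ref{lem_collected_properties_t_odd}, applied to the triple $a_1,a_2,a_3$ with $b=a_4$, forces $\dim(P_{124})=d-\frac{3}{2}t+\frac{1}{2}$, so the triple $a_1,a_2,a_4$ is again in the funny Case~2 and $U_{124}$ is a generator. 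Then Lemma~\ref{lem_t_odd_intersection_123_124} shows that $U:=U_{123}\cap U_{124}$ is totally isotropic of dimension at least $d-1$. The intended data for Lemma~\ref{lem_t_odd_P_large2} are this $U$, the ambient generator $G_0:=U_{123}$, and the subspaces $P:=P_{123}$, $Q:=P_{124}$; note that $P,Q\subseteq a_1\cap a_2\subseteq U$ have dimension $d-\frac{3}{2}t+\frac{1}{2}$, as required. The degenerate possibility $\dim(U)=d$, i.e.\ $U_{123}=U_{124}$, I would treat separately, either by exploiting the freedom in the choice of $a_4$ to arrange $U_{124}\neq U_{123}$, or by passing to a suitable hyperplane of the generator through $P$ and $Q$; I expect this case to reduce to, or be strictly easier than, the case $\dim(U)=d-1$.

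The heart of the argument, and the step I expect to be the main obstacle, is verifying the trichotomy demanded by Lemma~\ref{lem_t_odd_P_large2}: that every $b\in Y$ satisfies $\dim(b\cap U)\ge d-\frac{t}{2}-\frac{1}{2}$, or $\dim(b\cap P)>d-2t+1$, or $\dim(b\cap Q)>d-2t+1$. Applying Lemma~\ref{lem_collected_properties_t_odd} to the two triples $a_1,a_2,a_3$ and $a_1,a_2,a_4$ shows that every $b$ has $\dim(b\cap P_{123})\ge d-2t+1$ and $\dim(b\cap P_{124})\ge d-2t+1$, and if either inequality is strict the corresponding option of the trichotomy holds. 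The delicate case is when both are equalities. Then the same lemma yields $\dim(b\cap U_{123})\ge d-\frac{t}{2}-\frac{1}{2}$ and $\dim(b\cap U_{124})\ge d-\frac{t}{2}-\frac{1}{2}$; but since $U$ is only a hyperplane of the generator $U_{123}$, the crude bound $\dim(b\cap U)=\dim\big((b\cap U_{123})\cap U\big)\ge\dim(b\cap U_{123})-1$ gives merely $d-\frac{t}{2}-\frac{3}{2}$, one short of what is needed. Closing this one-dimensional gap is where the real work lies: I would exploit the second clause of the final assertion of Lemma~\ref{lem_collected_properties_t_odd}, which (under the minimality assumptions) forces $b$ to meet both $(d-t)$-dimensional lines of the triple in dimension $d-\frac{3}{2}t+\frac{1}{2}$, and use this to show that $b\cap U_{123}$ is in fact contained in $U$ (equivalently in $U_{124}$), whence $\dim(b\cap U)=\dim(b\cap U_{123})\ge d-\frac{t}{2}-\frac{1}{2}$. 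The alternative, that $b\cap U_{123}$ pokes out of the hyperplane $U$, should be ruled out by a contradiction with the equality $\dim(b\cap P_{124})=d-2t+1$.

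Finally, once the trichotomy holds for all $b\in Y$, I invoke the exclusion hypothesis: since $Y$ is not as in Example~\ref{lem_ex_t_odd}, not every element can meet the $(d-1)$-space $U$ in dimension at least $d-\frac{t}{2}-\frac{1}{2}$, for otherwise maximality of $Y$ together with the maximality of the configuration in Example~\ref{lem_ex_t_odd} would force $Y$ to coincide with that example. Hence some $a\in Y$ satisfies $\dim(a\cap U)<d-\frac{t}{2}-\frac{1}{2}$, which is precisely the remaining hypothesis of Lemma~\ref{lem_t_odd_P_large2}. That lemma then yields $|Y|\le 2b_1^\odd+b_2^\odd+b_3^\odd$, as claimed.
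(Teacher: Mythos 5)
Your skeleton matches the paper's: dispose of the trivial situations via Lemma \ref{lem_t_odd_P_large} and Lemma \ref{lem_bounds_t_odd}(a), fix a triple $a_1,a_2,a_3$ falling in Case~2 of Lemma \ref{lem_collected_properties_t_odd} with $\dim(a_1\cap a_2)=d-t$, pick $a_4$ with $\dim(a_4\cap P_{123})=d-2t+1$, set $U:=U_{123}\cap U_{124}$, $P:=P_{123}$, $Q:=P_{124}$, and aim for Lemma \ref{lem_t_odd_P_large2}. But the step you yourself flag as ``where the real work lies'' is exactly where the proposal breaks down, and the mechanism you sketch for it is not the right one. For $b\in Y$ with $\dim(b\cap P_{123})=\dim(b\cap P_{124})=d-2t+1$ you propose to prove $b\cap U_{123}\subseteq U_{124}$. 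That containment is neither needed nor true in general: if $\dim(b\cap U_{123})>d-\frac t2-\frac12$ the crude bound $\dim(b\cap U)\ge\dim(b\cap U_{123})-1$ already suffices, while in the critical case $\dim(b\cap U_{123})=d-\frac t2-\frac12$ one has $b\cap U_{123}=\langle b\cap\ell_{12},\,b\cap\ell_{13},\,b\cap\ell_{23}\rangle$, and only the first of these three pieces lies in $U_{124}$; the pieces $b\cap\ell_{13}\subseteq a_1\cap a_3$ and $b\cap\ell_{23}\subseteq a_2\cap a_3$ have no reason to lie in $U_{124}$, so the hoped-for ``contradiction with $\dim(b\cap P_{124})=d-2t+1$'' does not materialize. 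The paper closes the one-dimensional gap by a genuinely different device: for such a $b=a_i$ it introduces the third generator $U_{12i}=\langle a_1\cap a_2,\,a_1\cap a_i,\,a_2\cap a_i\rangle$, applies Lemma \ref{lem_t_odd_intersection_123_124} to the two pairs of triples $(1,2,3),(1,2,i)$ and $(1,2,4),(1,2,i)$ to get $\dim(U_{12i}\cap U_{123})\ge d-1$ and $\dim(U_{12i}\cap U_{124})\ge d-1$, rules out $\dim(U_{12i}\cap U_{123}\cap U_{124})=d-2$ by a dimension count, concludes $U\subseteq U_{12i}$, and then reads off $\dim(a_i\cap U)\ge d-\frac t2-\frac12$ from the quantitative conclusion $\dim(a_i\cap U_{123}\cap U_{12i})\ge\dim(U_{123}\cap U_{12i})-\frac t2+\frac12$ of that lemma. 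Without this (or an equivalent) argument the proof is incomplete at its central point.

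A secondary gap: the degenerate case in which every admissible $a_4$ yields $U_{124}=U_{123}$ is waved away (``exploit the freedom in the choice of $a_4$, or pass to a suitable hyperplane''). It cannot always be avoided by re-choosing $a_4$, and the paper treats it as a separate case: take for $U$ a hyperplane of $U_{123}$ containing $a_1\cap U_{123}$ and $P=Q=P_{123}$, and use part 2.(iv) of Lemma \ref{lem_collected_properties_t_odd} (with $a_4$ necessarily a non-special index of the triple $(1,2,4)$ because $\dim(a_1\cap a_2)=d-t$) to get $\dim(a_4\cap U_{124})=d-\frac t2+\frac12$ and hence $\dim(a_4\cap U)\ge d-\frac t2-\frac12$ for every $a_4$ with $\dim(a_4\cap P_{123})=d-2t+1$. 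This case needs its own short argument rather than a deferral.
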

\begin{proof}
  If $\dim(a_1 \cap a_2 \cap a_3) > d- \frac{3}{2}t + \frac{1}{2}$ for all $a_1, a_2, a_3 \in Y$,
  then Lemma \ref{lem_t_odd_P_large} shows $|Y| \leq b_1^\odd$.
  Hence suppose that there are $a_1, a_2, a_3 \in Y$ with $\dim(a_1 \cap a_2 \cap a_3) = d- \frac{3}{2}t + \frac{1}{2}$,
  $\dim(a_1 \cap a_2) = d-t$, and $\dim(a_1 \cap U_{123}) = d-\frac{t}{2}-\frac{1}{2}$
  (see Lemma \ref{lem_collected_properties_t_odd} (b) 2.).
  Set
  \begin{align*}
    &Y_1 := \{ a \in Y: \dim(a \cap P_{123}) > d-2t+1\}, \\
    &Y_2 := \{ a \in Y: \dim(a \cap P_{123}) = d-2t+1\}.
  \end{align*}
  By Lemma \ref{lem_collected_properties_t_odd} (b) 2.(v), $Y = Y_1 \cup Y_2$ is a partition of $Y$.
  We have $|Y_1| \leq b_1^\odd$ by Lemma \ref{lem_bounds_t_odd}.
  We may thus assume that $Y_2 \neq \emptyset$.

  \textbf{Case 1. All $a_4 \in Y_2$ satisfy $\dim(U_{123} \cap U_{124}) = d$.}
  Let $U \subseteq U_{123}$ be a $(d-1)$-dimensional subspace of $U_{123}$ with
  $a_1 \cap U_{123} \subseteq U$.
  By Lemma \ref{lem_collected_properties_t_odd} (b) 2.(v) and $\dim(a_4 \cap P_{123}) = d-2t+1$ show
  $\dim(P_{124}) = d - \frac{3}{2} t + \frac{1}{2}$ for all $a_4\in Y_2$.
  By Lemma \ref{lem_collected_properties_t_odd} (b) 2. (iv) and $\dim(a_1 \cap a_2) = d-t$, all $a_4 \in Y_2$ satisfy
  \begin{align*}
    \dim(a_4 \cap U) &\geq \dim(a_4 \cap U_{124})-1 \\
    &= d - \frac{t}{2} - \frac{1}{2}.
  \end{align*}
  As $Y$ is not as in Example \ref{lem_ex_t_odd}, there exists an $a_5 \in Y$ with $\dim(a_5 \cap U) < d - \frac{t}{2} - \frac{1}{2}$.
  Hence, we can apply Lemma \ref{lem_t_odd_P_large2} with $G_0 = U_{123}$ and $P=Q=P_{123}$. This shows $|Y| \leq 2b_1^\odd + b_2^\odd + b_3^\odd$.

  \textbf{Case 2. There exists a generator $a_4 \in Y_2$ with $\dim(U_{123} \cap U_{124}) \leq d-1$.}
  Put $U := U_{123} \cap U_{124}$.
  By Lemma \ref{lem_collected_properties_t_odd} (b) 2.(v) and $\dim(a_4 \cap P_{123}) = d-2t+1$,
  $\dim(P_{124}) = d - \frac{3}{2} t + \frac{1}{2}$.
  Hence, by Lemma \ref{lem_t_odd_intersection_123_124}, $\dim(U) = d-1$.
  Define the following subsets of $Y_2$:
  \begin{align*}
    S := \{ a_i \in Y_2: \dim(a_i \cap P_{124}) = d-2t+1\},\\
    T := \{ a_i \in Y_2: \dim(a_i \cap P_{124}) > d-2t+1 \}.
  \end{align*}
  By Lemma \ref{lem_collected_properties_t_odd} (b) 2.(v), this is a partition $Y_2 = S \cup T$ of $Y_2$.
  Let $a_i \in S$. By Lemma \ref{lem_collected_properties_t_odd} (b) 2.(v) and $\dim(a_i \cap P_{123}) = d-2t+1$,
  $\dim(P_{12i}) = d - \frac{3}{2} t + \frac{1}{2}$.
  Hence, by Lemma \ref{lem_t_odd_intersection_123_124}, $\dim(U_{12i} \cap U_{123}) \geq d-1$
  and $\dim(U_{12i} \cap U_{124}) \geq d-1$.
  Suppose for a contradiction $\dim(U_{12i} \cap U_{123} \cap U_{124}) = d-2$.
  Then $\dim(U_{12i} \cap U_{123}) = d-1$ and $\dim(U_{12i} \cap U_{124}) = d-1$.
  Hence,
  \begin{align*}
    d &\geq \dim(\langle U_{12i} \cap U_{123}, U_{12i} \cap U_{124}, U_{123} \cap U_{124} \rangle) \\
    &= \dim(U_{12i} \cap U_{123}) + \dim(U_{12i} \cap U_{124}) + \dim(U_{123} \cap U_{124}) - 2\dim(U_{12i} \cap U_{123} \cap U_{124})\\
    &= 3(d-1) - 2(d-2) = d+1.
  \end{align*}
  This is a contradiction. Hence, we have $U \subseteq U_{123} \cap U_{12i}$.
  Hence, by Lemma \ref{lem_t_odd_intersection_123_124},
  all $a_i \in S$ satisfy
  \begin{align*}
    \dim(a_i \cap U) \geq d - \frac{t}{2} - \frac{1}{2}.
  \end{align*}
  As $Y$ is not as in Example \ref{lem_ex_t_odd}, there exists an $a_5 \in Y$ with $\dim(a_5 \cap U) < d - \frac{t}{2} - \frac{1}{2}$.
  Thus we can apply Lemma \ref{lem_t_odd_P_large2} with $G_0 = U$, $P=P_{123}$, and $Q=P_{124}$.
  This shows $|Y| \leq 2b_1^\odd + b_2^\odd + b_3^\odd$.
\end{proof}

\begin{satz}\label{thm_max_t_odd}
 Let $Y$ be a maximal $(d, t)$-EKR set where $|Y| > 2b_1^\odd + b_2^\odd + b_3^\odd$. Then $Y$ is as in Example \ref{lem_ex_t_odd}.
\end{satz}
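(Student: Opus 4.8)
The plan is to prove the contrapositive: assuming $Y$ is maximal but \emph{not} as in Example \ref{lem_ex_t_odd}, I would show $|Y| \leq 2b_1^\odd + b_2^\odd + b_3^\odd$. The argument mirrors the proof of Theorem \ref{thm_max_t_even}, splitting into cases according to the span $U_{123}$ of a triple of elements of $Y$ whose triple intersection is as small as possible.

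First I would dispose of the degenerate situation. If every triple $a_1, a_2, a_3 \in Y$ satisfies $\dim(a_1 \cap a_2 \cap a_3) > d - \frac{3}{2}t + \frac{1}{2}$, then Lemma \ref{lem_t_odd_P_large} gives $|Y| \leq b_1^\odd$, which is well within the claimed bound. Hence I may assume there is a triple attaining the minimal value $\dim(P_{123}) = d - \frac{3}{2}t + \frac{1}{2}$. By Lemma \ref{lem_collected_properties_t_odd} (b), any such triple spans a subspace $U_{123}$ of dimension either $d-1$ or $d$, and this dichotomy drives the case split.

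In the first case, suppose some minimal triple has $\dim(U_{123}) = d-1$. I would set $U := U_{123}$, a totally isotropic $(d-1)$-space, choose a generator $G_0 \supseteq U$, and take $P = Q := P_{123}$. Lemma \ref{lem_collected_properties_t_odd} (b) 1.(v) states that every $b \in Y$ has $\dim(b \cap P_{123}) \geq d - 2t + 1$ with equality forcing $\dim(b \cap U) \geq d - \frac{t}{2} - \frac{1}{2}$; this is exactly the disjunctive hypothesis of Lemma \ref{lem_t_odd_P_large2} for the data $(G_0, U, P, Q)$. If in addition some $a \in Y$ satisfies $\dim(a \cap U) < d - \frac{t}{2} - \frac{1}{2}$, then Lemma \ref{lem_t_odd_P_large2} delivers $|Y| \leq 2b_1^\odd + b_2^\odd + b_3^\odd$. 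Otherwise every element of $Y$ meets the $(d-1)$-space $U$ in dimension at least $d - \frac{t}{2} - \frac{1}{2}$, so $Y$ is contained in the maximal EKR set of Example \ref{lem_ex_t_odd}; maximality of $Y$ then forces equality, contradicting our standing assumption. In the remaining case every minimal triple spans a $U_{123}$ of dimension $d$, which is precisely the hypothesis of Lemma \ref{lem_t_odd_funny_case}; since $Y$ is not the example, that lemma again yields $|Y| \leq 2b_1^\odd + b_2^\odd + b_3^\odd$. As both cases produce the required bound, the theorem follows.

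Because the substantive work is carried by the preceding lemmas, I expect the only delicate point to be the clean reduction in the first case: confirming that Lemma \ref{lem_collected_properties_t_odd} (b) 1.(v) genuinely supplies the $P = Q$ form of the hypothesis of Lemma \ref{lem_t_odd_P_large2}, and that the alternative ``every element meets $U$ largely'' collapses to Example \ref{lem_ex_t_odd} through maximality rather than leaving a genuinely new configuration to handle.
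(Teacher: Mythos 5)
Your proposal is correct and follows essentially the same route as the paper: dispose of the degenerate case via Lemma \ref{lem_t_odd_P_large}, send the ``all minimal triples span dimension $d$'' case to Lemma \ref{lem_t_odd_funny_case}, and in the $\dim(U_{123})=d-1$ case feed Lemma \ref{lem_collected_properties_t_odd} (b) 1.(v) with $P=Q=P_{123}$ into Lemma \ref{lem_t_odd_P_large2}, using maximality to identify $Y$ with Example \ref{lem_ex_t_odd} when every element meets $U$ in dimension at least $d-\frac{t}{2}-\frac{1}{2}$. The only difference is expository: you state the case split and the role of maximality slightly more explicitly than the paper does.
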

\begin{proof}
  In the view of Lemma \ref{lem_t_odd_P_large} we may assume that $Y$ has distinct
  elements $a_1$, $a_2$, $a_3$ such that $P := a_1 \cap a_2 \cap a_3$ has dimension
  $d - \frac{3}{2} t + \frac{1}{2}$. Put $U = \langle a_1 \cap a_2, a_1 \cap a_3, a_2 \cap a_3 \rangle$.
  We may suppose $\dim(U) = d-1$ by Lemma \ref{lem_collected_properties_t_odd} and
  Lemma \ref{lem_t_odd_funny_case}. Lemma \ref{lem_collected_properties_t_odd} shows
  that every $b \in Y$ satisfies $\dim(b \cap U) \geq d- \frac{t}{2} - \frac{1}{2}$
  or $\dim(b \cap P) > d-2t+1$. If $\dim(b \cap U) \geq d - \frac{t}{2} - \frac{1}{2}$
  for all $b \in Y$, then the maximality of $Y$ implies that $Y$ is as in Example \ref{lem_ex_t_odd}.
  Otherwise Lemma \ref{lem_t_odd_P_large2} shows that $|Y| \leq 2b_1^\odd + b_2^\odd + b_3^\odd$.
\end{proof}

\section{Some Inequalities}\label{sec_ests}

We will need some upper and lower estimates for the number of generators in a polar space and for the Gaussian coefficients.

\begin{lemma}[{\cite{MR2349596}}]\label{lem_log_bound}
  Let $x \geq 0$. Then we have
  \begin{align*}
    \frac{2x}{2+x} \leq \log(1+x) \leq \frac{x}{2} \cdot \frac{2+x}{1+x}.
  \end{align*}
\end{lemma}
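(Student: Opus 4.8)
The plan is to prove the two inequalities separately by the standard device of differentiating the difference and checking its sign. For the lower bound I would introduce
\begin{align*}
  f(x) = \log(1+x) - \frac{2x}{2+x},
\end{align*}
observe that $f(0) = 0$, and compute $f'$. Since $\frac{d}{dx}\frac{2x}{2+x} = \frac{4}{(2+x)^2}$, I expect the derivative to collapse to a manifestly nonnegative quantity:
\begin{align*}
  f'(x) = \frac{1}{1+x} - \frac{4}{(2+x)^2} = \frac{(2+x)^2 - 4(1+x)}{(1+x)(2+x)^2} = \frac{x^2}{(1+x)(2+x)^2},
\end{align*}
which is $\geq 0$ for $x \geq 0$. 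Hence $f$ is nondecreasing on $[0,\infty)$ and $f(x) \geq f(0) = 0$, giving the left-hand inequality.

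For the upper bound I would set
\begin{align*}
  g(x) = \frac{x}{2}\cdot\frac{2+x}{1+x} - \log(1+x),
\end{align*}
again with $g(0) = 0$. Writing the rational term as $\frac{2x+x^2}{2(1+x)}$ and differentiating, I anticipate the same kind of cancellation: the derivative of the rational part should simplify to $\frac{2+2x+x^2}{2(1+x)^2}$, so that
\begin{align*}
  g'(x) = \frac{2+2x+x^2}{2(1+x)^2} - \frac{1}{1+x} = \frac{x^2}{2(1+x)^2} \geq 0,
\end{align*}
whence $g$ is nondecreasing and $g(x) \geq g(0) = 0$, which is the right-hand inequality.

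Both halves therefore reduce to an identical pattern: the difference vanishes at the origin and has a derivative that simplifies exactly to $x^2$ over a positive denominator. There is no genuine obstacle here; the only thing to watch is the algebraic simplification of the two derivatives, and the pleasant structural fact—making the inequalities tight to second order at $x=0$ and equality hold precisely there—is that each numerator collapses to $x^2$. An alternative route would be to compare the Taylor/Pad\'e expansions of the three functions term by term, but the monotonicity argument is cleaner and simultaneously pins down the equality case.
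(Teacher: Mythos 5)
Your proof is correct: both derivative computations check out ($f'(x) = x^2/((1+x)(2+x)^2)$ and $g'(x) = x^2/(2(1+x)^2)$), and together with $f(0)=g(0)=0$ they give both inequalities. The paper itself offers no proof of this lemma at all --- it is simply quoted from Tops{\o}e's article on logarithmic bounds --- so your argument is not so much a different route as the only route on display; it is the standard monotonicity argument one would expect, it is self-contained, and your closing observation that the numerators collapse to $x^2$ correctly identifies $x=0$ as the unique equality case and explains why these are the optimal second-order Pad\'e-type bounds.
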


\begin{lemma}\label{lem_log_quotient}
  Let $q \geq 2$ and let $f: [0, \infty) \rightarrow \bbR$ be the function
  \begin{align*}
    f(x) = \frac{\log(1+q^{-x})}{\log(1+q^{-x-1})}.
  \end{align*}
  Then the first derivative $f'$ of $f$ is bounded by
  \begin{align*}
    f'(x) \geq \frac{(q^x (2q-2) - 1)\log(q)}{2q^{x+1}(2q^x+1)(q^x+1) (q^{x+1}+1) \log(1 + q^{-x-1})^2 }
  \end{align*}
  In particular, $f$ is monotonically increasing in $x$, i.e. $f'(x) > 0$ for all $x \in (0, \infty)$.
\end{lemma}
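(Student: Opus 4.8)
The plan is to differentiate $f$ directly and reduce the claimed lower bound to a single elementary inequality in $q^x$, controlled by the two-sided estimate of Lemma \ref{lem_log_bound}.

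First I would record the derivative of the auxiliary function $g(x) := \log(1+q^{-x})$. Since $\frac{d}{dx} q^{-x} = -\log(q)\,q^{-x}$, the chain rule gives $g'(x) = -\log(q)/(q^x+1)$. Writing $f(x) = g(x)/g(x+1)$ and applying the quotient rule then yields
\[
f'(x) = \frac{\log(q)}{\log(1+q^{-x-1})^2}\left(\frac{\log(1+q^{-x})}{q^{x+1}+1} - \frac{\log(1+q^{-x-1})}{q^x+1}\right),
\]
so everything reduces to bounding the bracketed quantity, call it $B$, from below.

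Next I would apply Lemma \ref{lem_log_bound}. Since the first summand of $B$ carries a positive sign and the second a negative sign, replacing $\log(1+q^{-x})$ by its lower bound $\frac{2}{2q^x+1}$ and $\log(1+q^{-x-1})$ by its upper bound $\frac{2q^{x+1}+1}{2q^{x+1}(q^{x+1}+1)}$ produces a genuine lower bound for $B$. Placing the two resulting fractions over the common denominator $2q^{x+1}(2q^x+1)(q^x+1)(q^{x+1}+1)$ and expanding the numerator (conveniently via the substitution $a = q^x$) collapses it to $q^x(2q-2)-1$. Substituting this estimate for $B$ gives exactly the asserted inequality for $f'(x)$. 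For the monotonicity statement I would then note that the denominator of the bound is a product of strictly positive quantities and that $\log(q)>0$ for $q \geq 2$, so the sign of $f'(x)$ is that of $q^x(2q-2)-1$; since $2q-2\geq 2$ and $q^x\geq 1$ for $x\geq 0$, this numerator is at least $2q-3\geq 1>0$, whence $f'(x)>0$.

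The main obstacle I anticipate is purely one of bookkeeping rather than ideas: one must get the direction of each one-sided estimate right (the lower bound on the positively-signed logarithm and the upper bound on the negatively-signed one, so that the combination is a valid lower bound for $B$), and then carry out the common-denominator cancellation carefully so that the unwieldy numerator collapses precisely to $q^x(2q-2)-1$ and matches the claimed formula rather than a slightly weaker or incorrect expression.
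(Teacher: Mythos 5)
Your proposal is correct and follows essentially the same route as the paper: compute $f'$ by the quotient rule, apply the lower bound of Lemma \ref{lem_log_bound} to the positively-signed logarithm and the upper bound to the negatively-signed one, and verify that the combined numerator collapses to $2q^{x+1}-2q^x-1 = q^x(2q-2)-1$. The final positivity observation $q^x(2q-2)-1 \geq 2q-3 \geq 1$ for $q \geq 2$ is also the intended conclusion.
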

\begin{proof}
  We have 
  \begin{align*}
   f'(x) =\frac{\left( (q^x+1) \log(1+q^{-x}) - (q^{x+1} + 1) \log( 1 + q^{-x-1}) \right)\log(q)}{(q^x+1) (q^{x+1}+1)\log(1 + q^{-x-1})^2}\\
  \end{align*}
  and Lemma \ref{lem_log_bound} implies that 
 \begin{align*}
 & (q^x+1) \log(1+q^{-x}) - (q^{x+1} + 1) \log( 1 + q^{-x-1})
 \\ &\ge (q^x+1)\frac{2q^{-x}}{2+q^{-x}}-(q^{x+1}+1)\cdot\frac{q^{-x-1}}{2}\cdot\frac{2+q^{-x-1}}{1+q^{-x-1}}
 \\&=\frac{2q^{x+1}-2q^x-1}{2q^{x+1}(2q^x+1)}.
 \end{align*}
 
\end{proof}

\begin{kor}\label{lem_log_quotient2}
Define the functions $g,\alpha: [0, \infty) \times [2, \infty) \rightarrow \bbR$ by
 \begin{align*}
    \alpha(x,q) & :=\frac{\log(1+q^{-x})}{\log(1+q^{-x-1})}
    \\
        g(x, q) &:= (1+q^{-x})^{\frac{\alpha(x,q)}{\alpha(x,q)-1}}
 \end{align*}
  For fixed $q$, the function $g$ is monotonically decreasing in $x$. Also the function $g(0, q)$ is monotonically decreasing in $q$.
\end{kor}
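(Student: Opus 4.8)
The plan is to pass to logarithms and exploit a product structure. Writing $a := \log(1+q^{-x})$ and $b := \log(1+q^{-x-1})$, one has $\alpha = a/b$ and hence $\tfrac{\alpha}{\alpha-1} = \tfrac{a}{a-b}$, so that
\[
\log g(x,q) = \frac{\alpha(x,q)}{\alpha(x,q)-1}\,\log(1+q^{-x}).
\]
Since $q^{-x} > q^{-x-1} > 0$ and $s\mapsto\log(1+s)$ is strictly increasing, we get $a > b > 0$, so $\alpha > 1$ and both factors on the right-hand side are positive. As $\log$ is increasing, for the first claim it suffices to show that this right-hand side is decreasing in $x$, and for the second claim that $\log g(0,q)$ is decreasing in $q$.

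For the first claim I would argue that each factor is a positive, strictly decreasing function of $x$, and that a product of two positive strictly decreasing functions is again strictly decreasing. The factor $\log(1+q^{-x})$ is strictly decreasing because $q^{-x}$ is. For the factor $\tfrac{\alpha}{\alpha-1}$, Lemma \ref{lem_log_quotient} tells us that $\alpha(\cdot,q)$ is strictly increasing in $x$; since the map $s\mapsto \tfrac{s}{s-1}=1+\tfrac{1}{s-1}$ is strictly decreasing on $(1,\infty)$ and $\alpha>1$, the composition $\tfrac{\alpha(x,q)}{\alpha(x,q)-1}$ is strictly decreasing in $x$. This product observation is the crux of the argument: reading off monotonicity of the exponent directly from the already-proved Lemma \ref{lem_log_quotient} lets one bypass a direct derivative computation for $g$.

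For the second claim I would specialize to $x=0$, where $q^{-x}=1$ and $q^{-x-1}=1/q$, giving
\[
\log g(0,q) = \frac{\log 2}{\log 2 - \log\frac{q+1}{q}}\cdot\log 2 = \frac{(\log 2)^2}{\log\frac{2q}{q+1}}.
\]
Because $\tfrac{2q}{q+1}=2-\tfrac{2}{q+1}$ is strictly increasing in $q$ and exceeds $1$ for $q\ge 2$, the denominator $\log\tfrac{2q}{q+1}$ is positive and strictly increasing, so the whole expression is strictly decreasing in $q$; since $\log$ is increasing, $g(0,q)$ is strictly decreasing in $q$.

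The main obstacle here is essentially notational rather than analytic: all of the genuine work is already packaged in Lemma \ref{lem_log_quotient}, and the only insight needed is the product decomposition of $\log g$ together with the monotonicity of $s\mapsto s/(s-1)$. Had the product route not been available, one would instead differentiate $\log g = a^2/(a-b)$ and, after inserting the two-sided estimates of Lemma \ref{lem_log_bound}, reduce to the elementary inequality $2(q-1)(q+q^{-x}) > q^{-2x}$, which holds trivially for $q\ge 2$ and $x\ge 0$ since then $q^{-x}\le 1$ while the left-hand side is at least $4$.
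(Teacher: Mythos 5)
Your proof is correct. For the monotonicity in $x$ you use exactly the paper's argument: Lemma \ref{lem_log_quotient} gives that $\alpha(\cdot,q)$ is increasing, hence $\frac{\alpha}{\alpha-1}=1+\frac{1}{\alpha-1}$ is decreasing, and this is combined with the fact that $1+q^{-x}$ is decreasing; you phrase the combination as a product of two positive decreasing factors of $\log g$, the paper as a power with base greater than $1$ and positive exponent both decreasing, which is the same observation. For the monotonicity of $g(0,q)$ in $q$ you take a slightly different and arguably cleaner route: the paper differentiates $\frac{\alpha(0,q)}{\alpha(0,q)-1}$ with respect to $q$ and checks the sign of the derivative, whereas you write $\log g(0,q)=(\log 2)^2/\log\frac{2q}{q+1}$ in closed form and note that $\frac{2q}{q+1}$ is increasing and exceeds $1$ for $q\ge 2$; both are valid, and yours avoids calculus at this step at the cost of being special to $x=0$ (which is all that is claimed).
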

\begin{proof}
  For fixed $q\ge 2$, Lemma \ref{lem_log_quotient} shows that
  \begin{align*}
  \frac{\alpha(x,q)}{\alpha(x,q)-1} = 1 +\frac{1}{\alpha(x,q)-1}
  \end{align*}
  is monotonically decreasing in $x$. For fixed $q \geq 2$ also the function $x\mapsto 1+q^{-x}$ for $x\ge 0$ is monotonically decreasing. Hence for fixed $q$, the function $g$ is monotonically decreasing in $x$.
  The derivative of $\frac{\alpha(0,q)}{\alpha(0,q)-1}$ with respect to $q$ is
  \begin{align*}
    - \frac{\alpha(0,q)}{(\alpha(0,q)-1)^2 q (q+1) \log(1 + q^{-1})} < 0.
  \end{align*}
  As $1+q^{-x} = 2 > 1$ for $x=0$, this shows that $g(0, q)$ is monotonically decreasing in $q$.
\end{proof}

\begin{lemma}\label{lem_est_gens}
  Let $\scrP$ be a polar space of rank $d$ and type $e$.
  \begin{enumerate}[(a)]
  \item The polar space $\scrP$ contains at least
  \begin{align*}
     q^{de + \binom{d}{2}}
  \end{align*}
    generators.
  \item Let $\alpha > 1$ be a real number with
  \begin{align*}
    \alpha \log(1+q^{-e-1}) \leq \log(1+q^{-e}).
  \end{align*}
  Let $x$ be the number of generators of $\scrP$.
  Then
  \begin{align*}
    x \cdot q^{-de - \binom{d}{2}} = \prod_{i=0}^{d-1} (1+ q^{-e-i}) \leq \left( 1 + q^{-e} \right)^{\frac{\alpha}{\alpha-1}}.
  \end{align*}
  Furthermore, the second inequality holds for all $e \in \bbR$.
  \end{enumerate}
\end{lemma}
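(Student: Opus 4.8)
The plan is to start from the exact count $x = \prod_{i=0}^{d-1}(q^{i+\epsilon}+1)$ supplied by \eqref{number_gens}, and to separate the trivial lower bound (a) from the more delicate upper bound (b).

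For (a) I would simply discard the additive constants: since $q^{i+\epsilon}+1 \ge q^{i+\epsilon}$, we obtain $x \ge \prod_{i=0}^{d-1} q^{i+\epsilon} = q^{\sum_{i=0}^{d-1}(i+\epsilon)} = q^{d\epsilon + \binom{d}{2}}$, using $\sum_{i=0}^{d-1} i = \binom{d}{2}$.

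For the equality in (b) I would factor $q^{i+\epsilon}$ out of each term, $q^{i+\epsilon}+1 = q^{i+\epsilon}(1+q^{-\epsilon-i})$, giving $x = q^{d\epsilon+\binom{d}{2}}\prod_{i=0}^{d-1}(1+q^{-\epsilon-i})$, which rearranges to the stated identity. The inequality is the real content. Writing $a_i := \log(1+q^{-\epsilon-i}) > 0$, taking logarithms reduces the claim to $\sum_{i=0}^{d-1} a_i \le \frac{\alpha}{\alpha-1}\, a_0$. The hypothesis $\alpha\log(1+q^{-\epsilon-1}) \le \log(1+q^{-\epsilon})$ is exactly the statement $a_0/a_1 = f(\epsilon) \ge \alpha$, where $f$ is the function of Lemma \ref{lem_log_quotient}. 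Since $a_i/a_{i+1} = f(\epsilon+i)$ and $f$ is monotonically increasing, I get $a_i/a_{i+1} \ge f(\epsilon) \ge \alpha$ for every $i \ge 0$, hence $a_{i+1} \le \alpha^{-1} a_i$ and, inductively, $a_i \le \alpha^{-i} a_0$. Summing the geometric series $\sum_{i \ge 0} \alpha^{-i} = \frac{\alpha}{\alpha-1}$ and exponentiating yields the bound.

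The main obstacle is the concluding assertion that the inequality holds for every real $\epsilon$, not just $\epsilon \ge 0$: Lemma \ref{lem_log_quotient} provides monotonicity of $f$ only on $[0,\infty)$, so for negative $\epsilon$ the step $f(\epsilon+i) \ge f(\epsilon)$ is not yet justified. To close this gap I would re-derive the monotonicity of $f$ on all of $\bbR$ more cleanly. Substituting $u = q^{-x}$ in the numerator of $f'(x)$ recorded in Lemma \ref{lem_log_quotient} and clearing the positive factor $u$, that numerator becomes $h(u) - q\,h(u/q)$ with $h(v) := (1+v)\log(1+v)$. As $h$ is convex with $h(0)=0$, the secant slope $h(v)/v$ is increasing on $(0,\infty)$, so $h(u)/u \ge h(u/q)/(u/q)$, i.e. $h(u) \ge q\,h(u/q)$, for all $u > 0$ and $q \ge 1$. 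Since $u = q^{-x} > 0$ for every real $x$, this shows $f' \ge 0$ on all of $\bbR$, after which the geometric-series argument applies verbatim for arbitrary $\epsilon \in \bbR$.
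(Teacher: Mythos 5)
Your proof is correct and follows essentially the same route as the paper: both reduce the product bound to $\log(1+q^{-e-i-1}) \le \alpha^{-1}\log(1+q^{-e-i})$ via the monotonicity of $f$ from Lemma \ref{lem_log_quotient} and then sum the geometric series $\sum_{i\ge 0}\alpha^{-i} = \frac{\alpha}{\alpha-1}$. Your closing convexity argument for $h(v)=(1+v)\log(1+v)$ is a genuine bonus: the paper's proof silently applies Lemma \ref{lem_log_quotient} (stated only on $[0,\infty)$) to justify the final claim that the inequality holds for all $e\in\bbR$, and your secant-slope argument $h(u)\ge q\,h(u/q)$ is exactly what is needed to make that extension rigorous.
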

\begin{proof}
  The first claim is a trivial consequence of \eqref{number_gens}.
  We shall prove the second claim in the following.

  By Lemma \ref{lem_log_quotient}, the hypothesis on $\alpha$ implies
  \begin{align*}
   \alpha \log(1+q^{-e-1-i}) \leq \log(1+q^{-e-i})
  \end{align*}
  for all $i \geq 0$. Then
  \begin{align*}
    \sum_{i=0}^\infty \log(1+q^{-e-i}) &\leq \sum_{i=0}^\infty \alpha^{-i} \log(1+q^{-e})\\
    &= \log(1+q^{-e}) \sum_{i=0}^\infty \alpha^{-i} = \log(1+q^{-e}) \frac{\alpha}{\alpha - 1}.
  \end{align*}
  Hence,
  \begin{align*}
    \prod_{i=0}^{d-1} (1+ q^{-e-i}) \leq \left( 1 + q^{-e} \right)^{\frac{\alpha}{\alpha-1}}.
  \end{align*}
\end{proof}

Particularly, the upper bound in the previous result is noteworthy as it is much tighter
for many choices of $q$ and $e$ than the standard upper bound
  \begin{align*}
    \prod_{i=0}^{d-1} (1+ q^{-e-i}) < 2 + \frac{1}{q^e},
  \end{align*}
which holds for $e\ge \frac12$ and $q^e\ge 2$. See \cite[Lemma 11]{MR2755082} for a proof of this standard bound.

\begin{lemma}\label{lem_upper_bnd_gauss}
  Let $n \geq k \geq 0$.
  \begin{enumerate}[(a)]
   \item Let $q \geq 3$. Then
   \begin{align*}
      \gauss{n}{k} \leq 2 q^{k(n-k)}.
   \end{align*}
   \item Let $q \geq 4$. Then
  \begin{align*}
    \gauss{n}{k} \leq (1 + 2q^{-1}) q^{k(n-k)}.
  \end{align*}
   \item Let $q \geq 2$. Let $n \geq 1$. Then
  \begin{align*}
    \gauss{n}{1} \leq \frac{q}{q-1} q^{n-1}.
  \end{align*}
  \end{enumerate}
\end{lemma}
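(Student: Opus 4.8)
The plan is to separate off the dominant power of $q$ and then control the remaining bounded factor uniformly. Starting from the definition, I would rewrite each factor of $\gauss{n}{k}=\prod_{i=1}^k\frac{q^{n-i+1}-1}{q^i-1}$ as
\begin{align*}
\frac{q^{n-i+1}-1}{q^i-1}=q^{n+1-2i}\cdot\frac{1-q^{-(n-i+1)}}{1-q^{-i}},
\end{align*}
and collect the powers of $q$. Since $\sum_{i=1}^k(n+1-2i)=k(n-k)$, this yields the exact identity
\begin{align*}
\gauss{n}{k}=q^{k(n-k)}\prod_{i=1}^k\frac{1-q^{-(n-i+1)}}{1-q^{-i}}.
\end{align*}
Because $i\le k\le n$ forces $n-i+1\ge 1$, every numerator satisfies $1-q^{-(n-i+1)}\le 1$, so dropping the numerators and enlarging the range of the (convergent) denominator product gives
\begin{align*}
\gauss{n}{k}\le q^{k(n-k)}\prod_{i=1}^{\infty}\frac{1}{1-q^{-i}}.
\end{align*}
This reduces claims (a) and (b) to a single $q$-dependent bound on the infinite product $\prod_{i=1}^\infty(1-q^{-i})^{-1}$.

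To bound that product I would invoke the elementary Weierstrass inequality $\prod_i(1-a_i)\ge 1-\sum_i a_i$ for $a_i\in[0,1]$, applied with $a_i=q^{-i}$. Since $\sum_{i=1}^\infty q^{-i}=\frac{1}{q-1}$, this gives $\prod_{i=1}^\infty(1-q^{-i})\ge\frac{q-2}{q-1}$, hence
\begin{align*}
\prod_{i=1}^{\infty}\frac{1}{1-q^{-i}}\le\frac{q-1}{q-2}\qquad(q\ge 3).
\end{align*}
For part (a) it then suffices to note that $\frac{q-1}{q-2}=1+\frac{1}{q-2}$ is decreasing in $q$ and equals $2$ at $q=3$, so it is at most $2$ for all $q\ge3$. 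For part (b) I would check the inequality $\frac{q-1}{q-2}\le 1+2q^{-1}$ directly: clearing the (positive) denominators turns it into $(q-1)q\le(q+2)(q-2)=q^2-4$, i.e. $q\ge4$, which is exactly the hypothesis. Combining either estimate with the displayed bound on $\gauss{n}{k}$ proves (a) and (b).

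Part (c) needs no product estimate, since $\gauss{n}{1}=\frac{q^n-1}{q-1}\le\frac{q^n}{q-1}=\frac{q}{q-1}q^{n-1}$. The only real subtlety, and the point I would verify most carefully, is that the constants $2$ and $1+2q^{-1}$ are sharp for this method precisely at the smallest admissible value of $q$ (equality in the consequence of the Weierstrass bound at $q=3$ and $q=4$, respectively). A cruder tail estimate, such as bounding $-\log(1-q^{-i})$ termwise by $\frac{q^{-i}}{1-q^{-i}}$, would overshoot these target constants; the main point is therefore to use the Weierstrass inequality, which is tight enough to land exactly on the stated values while remaining trivial to apply.
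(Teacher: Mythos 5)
Your proof is correct. The first reduction is the same as the paper's: writing each factor as $q^{n+1-2i}\cdot\frac{1-q^{-(n-i+1)}}{1-q^{-i}}$ and discarding the numerators is exactly the paper's step $\gauss{n}{k}\le q^{k(n-k)}\prod_{i=1}^k\frac{q^i}{q^i-1}$, so both arguments come down to bounding $\prod_{i=1}^k(1-q^{-i})^{-1}$ by $2$ (for $q=3$) and by $1+2q^{-1}$ (for $q\ge 4$). Where you diverge is in how that bounded factor is controlled: the paper proves $\prod_{i=1}^k\frac{q^i}{q^i-1}\le 1+\alpha q^{-1}$ by induction on $k$, strengthening the inductive hypothesis to $1+\alpha\frac{q^{k-1}-2}{q^k-2}$ and checking the cases $k\le 3$ by hand, with a fairly unpleasant algebraic verification of the induction step. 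You instead pass to the infinite product and apply the Weierstrass inequality $\prod_i(1-a_i)\ge 1-\sum_i a_i$ with $a_i=q^{-i}$, getting $\prod_{i=1}^\infty(1-q^{-i})^{-1}\le\frac{q-1}{q-2}$, which equals $2$ at $q=3$ and is at most $1+2q^{-1}$ precisely when $q\ge 4$. This is shorter, avoids the induction entirely, and (as you note) lands exactly on the stated constants at the boundary values $q=3$ and $q=4$; the paper's strengthened hypothesis gives marginally sharper finite-$k$ information, but that is never used. Your part (c) is the same one-line computation as the paper's.
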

\begin{proof}
  Part (c) follows from the definition of the Gaussian coefficient. We have
  \begin{align}
    \gauss{n}{k} &= \prod_{i=1}^k \frac{q^{n-k+i}-1}{q^i-1} \leq \prod_{i=1}^k \frac{q^{n-k+i}}{q^i-1} = q^{k(n-k)} \prod_{i=1}^k \frac{q^i}{q^i-1}.\label{lem_upper_gauss_ineq1}
  \end{align}
  For (a) and (b) we therefore have to show that
  \begin{align*}
   \prod_{i=1}^k \frac{q^i}{q^i-1} \leq 1 + \alpha q^{-1},
  \end{align*}
  with $\alpha=3$ for $q=3$ and $\alpha=2$ for $q\geq 4$. This can easily be checked by hand for $k\le 2$. For $k\ge 3$ we use induction on $k$ to prove the stronger statement
  \begin{align}
    \prod_{i=1}^k \frac{q^i}{q^i-1} \leq 1 + \alpha \frac{q^{k-1}-2}{q^{k}-2}.  \label{lem_upper_gauss_ind}
  \end{align}
  For $k=3$, this is easily verified. The induction step follows from
  \begin{align*}
  \left( 1 + \alpha \frac{q^{k-1}-2}{q^{k}-2} \right) \frac{q^{k+1}}{q^{k+1}-1}
    = & 1 + \alpha \frac{q^{k}-2}{q^{k+1}-2} - \\
    &\begin{cases}
	    	     \frac{2(3^{2k+2} + 10 \cdot 3^k - 8)}{(3^k-2)(3^{k+1} - 2)(3^{k+1}-1)} & \text{ if } q = 3 \text{ and } \alpha = 3,\\
             \frac{(q^{k+1}-2q^k+2)(4q^{k+1} -q^k -6)}{(q^k-2)(q^{k+1} - 2)(q^{k+1}-1)} & \text{ if } q\geq 4 \text{ and } \alpha = 2.
    \end{cases}
  \end{align*}
  for $k\ge 3$.
\end{proof}

\begin{lemma}\label{lem_lowest_gauss}
  For integers $n>k>0$ we have
  \begin{align*}
    (1+q^{-1}) q^{k(n-k)} \leq \gauss{n}{k}.
  \end{align*}
\end{lemma}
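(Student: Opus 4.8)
The plan is to start from the product expansion of the Gaussian coefficient and isolate the leading power $q^{k(n-k)}$, so that the claim reduces to a clean inequality for a product of correction factors. Writing
\begin{align*}
  \gauss{n}{k} = \prod_{i=1}^k \frac{q^{n-k+i}-1}{q^i-1} = q^{k(n-k)} \prod_{i=1}^k \frac{1 - q^{-(n-k+i)}}{1 - q^{-i}},
\end{align*}
it suffices to prove that the remaining product is at least $1 + q^{-1}$.

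The key observation is that $n > k$ forces $n - k \geq 1$, and hence $n - k + i \geq i + 1$ for every $i$. Since $x \mapsto 1 - q^{-x}$ is increasing, this yields the termwise lower bound $1 - q^{-(n-k+i)} \geq 1 - q^{-(i+1)}$ on each numerator, while the denominators are left untouched. I would then replace each numerator accordingly and observe that the resulting bound telescopes:
\begin{align*}
  \prod_{i=1}^k \frac{1 - q^{-(n-k+i)}}{1 - q^{-i}} \geq \prod_{i=1}^k \frac{1 - q^{-(i+1)}}{1 - q^{-i}} = \frac{1 - q^{-(k+1)}}{1 - q^{-1}}.
\end{align*}

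Finally I would verify the elementary inequality $\frac{1 - q^{-(k+1)}}{1 - q^{-1}} \geq 1 + q^{-1}$. Multiplying through by $1-q^{-1}$, this is equivalent to $1 - q^{-(k+1)} \geq 1 - q^{-2}$, i.e. to $q^{-(k+1)} \leq q^{-2}$, which holds precisely because $k \geq 1$ (with equality exactly when $k = 1$). This is the only place where the hypothesis $k > 0$ is used, and the hypothesis $n > k$ enters only through $n - k \geq 1$ in the telescoping step. There is no genuine obstacle here: the whole argument is a factor-out-the-leading-power, telescope, and compare computation, and the single point that requires a moment's care is getting the direction of the termwise bound right so that the telescoped product furnishes an honest lower bound rather than an upper one.
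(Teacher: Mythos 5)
Your proof is correct: the factorization $\gauss{n}{k}=q^{k(n-k)}\prod_{i=1}^k\frac{1-q^{-(n-k+i)}}{1-q^{-i}}$ is valid, the termwise bound $1-q^{-(n-k+i)}\ge 1-q^{-(i+1)}$ follows from $n-k\ge 1$ and goes in the right direction, the telescoping to $\frac{1-q^{-(k+1)}}{1-q^{-1}}$ is exact, and the final comparison with $1+q^{-1}$ reduces correctly to $k\ge 1$. The paper's proof extracts the same leading power but distributes the work differently: it keeps the $i=1$ factor intact, bounds each of the remaining $k-1$ factors below by the crude estimate $\frac{q^{n-k+i}-1}{q^i-1}\ge q^{n-k}$, and then reads off $\frac{q^{n+1-k}-1}{q-1}\ge q^{n-k}+q^{n-k-1}$ from the geometric series, yielding the intermediate bound $q^{k(n-k)}(1+q^{-1}+\cdots+q^{-(n-k)})$. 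Your route instead bounds every factor uniformly and lets the product telescope, producing the intermediate bound $q^{k(n-k)}(1+q^{-1}+\cdots+q^{-k})$. Neither intermediate bound dominates the other in general, and both are a one-line overshoot of what is needed; your version is arguably a bit more systematic in that it treats all $k$ factors symmetrically, while the paper's is slightly more direct in that it throws away all but one factor immediately. Either argument is a perfectly acceptable proof of the lemma as stated.
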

\begin{proof}
  We have
  \begin{align*}
   \gauss{n}{k} &= \prod_{i=1}^k \frac{q^{n-k+i}-1}{q^i-1}
   \geq \frac{q^{n+1-k}-1}{q-1} \cdot q^{(k-1)(n-k)}\\
   &\geq (q^{n-k}+q^{n-k-1}) \cdot q^{(k-1)(n-k)}.
  \end{align*}
  This proves the statement.
\end{proof}

\section{The Association Scheme of a Dual Polar Graph}\label{sec_assoc}

We need some basic properties of association schemes of the dual polar graphs of rank $d$.
A complete introduction to association schemes can be found in \cite[Ch. 2]{brouwer1989distance}.

\begin{defi}
  Let $X$ be a finite set. A $d$-class \textit{association scheme} is a pair $(X, \scrR)$, where $\scrR = \{ R_0, \ldots, R_d \}$ is a set of non-empty symmetric binary relations on $X$ with the following properties:
  \begin{enumerate}[(a)]
    \item $\scrR$ is a partition of $X \times X$.
    \item $R_{0}$ is the identity relation.
    \item There are integers $p_{ij}^k$ such that for $x, y \in X$ with $x R_k y$ there are exactly $p_{ij}^k$ elements $z$ with $x R_i z$ and $z R_j y$.
  \end{enumerate}
\end{defi}

The number $n_i := p_{ii}^{0}$ is called the \textit{$i$-valency} of $(X, \scrR)$. The total number of elements of $X$ is
\begin{align*}
  n := |X| = \sum_{i=0}^d n_i.
\end{align*}

The relations $R_i$ are described by their adjacency matrices $A_i \in \bbC^{n,n}$ defined by
\begin{align*}
  (A_i)_{xy} = \begin{cases}
                 1 & \text{ if } x R_i y,\\
                 0 & \text{ otherwise.}
               \end{cases}
\end{align*}
The matrices $A_i$ have exactly $d+1$ common eigenspaces $V_j$ with associated eigenvalues $P_{ij}$ (see \cite[p. 45]{brouwer1989distance}).
There exist idempotent Hermitian matrices $E_j \in \bbC^{n,n}$ (hence they are positive semidefinite) with the properties
\begin{align*}
\begin{array}{lll}\displaystyle
\sum_{j=0}^d E_j = I, & \hspace*{2cm} &\displaystyle E_{0} = \frac{1}{n} J,\\
\displaystyle A_j = \sum_{i=0}^d P_{ij} E_i, &  &\displaystyle E_j = \frac{1}{n} \sum_{i=0}^d Q_{ij} A_i,
  \end{array}
\end{align*}
where $J$ is the all-one matrix, and $P = (P_{ij}) \in \bbC^{d+1,d+1}$ and $Q = (Q_{ij}) \in \bbC^{d+1,d+1}$ are the so-called eigenmatrices of the association scheme.

In this paper, $X$ will be the set of generators of a polar space of rank $d$.
The relations $R_0, \ldots R_d$ are defined by
\begin{align*}
  (A_i)_{xy} = \begin{cases}
          1 & \text{ if } \codim(x \cap y) = i,\\
          0 & \text{ if } \codim(x \cap y) \neq i.
         \end{cases}
\end{align*}
for generators $x$, $y$, and $0\leq i \leq d$.

Formulas for the eigenvalues of these association schemes from polar spaces
were calculated by Stanton \cite{MR618532}, Eisfeld \cite{MR1714376}, and Vanhove \cite[Theorem 4.3.6]{vanhove_phd}. We will use Vanhove's version.

\begin{satz}\label{thm_ev_gen}
  The eigenvalues of the adjacency matrix $A_s$ are
  \begin{align*}
   P_{r,s} &= \sum_{t=\max(r-s, 0)}^{\min(d-s,r)} (-1)^{r-t} \gauss{d-r}{d-s-t} \gauss{r}{t} q^{\binom{r-t}{2} + \binom{s-r+t}{2} + (s-r+t) \epsilon}.
  \end{align*}
\end{satz}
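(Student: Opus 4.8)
The statement to prove (Theorem~\ref{thm_ev_gen}) gives an explicit closed formula for the eigenvalues $P_{r,s}$ of the adjacency matrix $A_s$ of the dual polar association scheme. Let me sketch how I would establish it.

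The plan is to exploit the recursive structure of the dual polar association scheme rather than attempting a direct diagonalization. The eigenvalues $P_{r,s}$ are determined by the intersection numbers $p_{ij}^k$, which in turn satisfy the three-term recurrences characteristic of a $P$-polynomial (distance-regular) scheme. Since the dual polar graph is distance-regular, the eigenvalues of $A_1$ are known classically, and the eigenvalues of $A_s$ are polynomials in those of $A_1$. However, directly inverting those polynomials is messy, so instead I would verify the proposed formula by induction. Concretely, I would fix the eigenspace index $r$ and show that the numbers $P_{r,s}$ given by the claimed sum satisfy the same recurrence in $s$ (coming from the relation $A_1 A_s = \sum_k \text{(structure constants)} A_k$) that the true eigenvalues must satisfy, together with the correct initial condition $P_{r,0}=1$.

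\textbf{Key steps.} First I would record the base case: for $s=0$, the sum collapses to the single term $t=r$, giving $\gauss{d-r}{d-r}\gauss{r}{r}q^{0}=1$, as required since $A_0=I$. Second, I would identify the correct three-term recurrence. For the dual polar graph on generators, adjacency in $A_1$ means meeting in codimension $1$, and the intersection array yields coefficients expressible through Gaussian coefficients and powers of $q$ depending on $\epsilon$. The eigenvalue $P_{r,1}$ of the graph $A_1$ on eigenspace $V_r$ is classically
\begin{align*}
  P_{r,1} = q^{1+\epsilon}\gauss{d-r}{1} - \gauss{r}{1},
\end{align*}
and the relation $A_1 A_s = b_{s-1}A_{s-1} + a_s A_s + c_{s+1}A_{s+1}$ forces
\begin{align*}
  P_{r,1}\,P_{r,s} = b_{s-1}P_{r,s-1} + a_s P_{r,s} + c_{s+1}P_{r,s+1}.
\end{align*}
The third step is the heart of the argument: substitute the proposed closed form into this recurrence and verify the identity. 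This reduces, after collecting the $q$-power prefactors $q^{\binom{r-t}{2}+\binom{s-r+t}{2}+(s-r+t)\epsilon}$, to a Gaussian-coefficient identity that can be checked termwise by invoking the Pascal-type recurrence \eqref{gauss_rec} and the alternating-sum identity of Lemma~\ref{gauss_alt_sum}. The shifting of the summation index $t$ induced by the recurrence in $s$ must be matched against the reindexing produced by splitting $\gauss{d-r}{d-s-t}$ and $\gauss{r}{t}$ via \eqref{gauss_rec}.

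\textbf{Main obstacle.} The genuinely hard part is the bookkeeping in step three: correctly determining the intersection numbers $b_{s-1}, a_s, c_{s+1}$ as functions of $d,s,q,\epsilon$, and then matching the telescoping produced by the recurrence against the index shift in the claimed sum. The subtle point is that the summation limits $\max(r-s,0)$ and $\min(d-s,r)$ change with $s$, so the boundary terms of the sum must be handled separately to ensure they vanish or combine correctly; this is precisely where Lemma~\ref{gauss_alt_sum} controls the alternating cancellations. For this reason I would in practice prefer to cite Vanhove's computation \cite[Theorem 4.3.6]{vanhove_phd}, where the formula is derived from the theory of dual polar spaces as a $Q$-polynomial scheme using explicit orthogonal (dual $q$-Hahn / Eberlein) polynomials, and to present the induction above only as an independent verification of the stated closed form.
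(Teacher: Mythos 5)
The paper offers no proof of Theorem \ref{thm_ev_gen}: the formula is taken verbatim from Vanhove \cite[Theorem 4.3.6]{vanhove_phd}, with Stanton \cite{MR618532} and Eisfeld \cite{MR1714376} cited as alternative sources. Your closing recommendation to cite Vanhove is therefore exactly what the paper does, and to that extent your proposal agrees with it.

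Read as a self-contained argument, however, your proposal has a genuine gap: the entire content of the theorem sits in your ``step three'' --- substituting the closed form into the three-term recurrence $P_{r,1}P_{r,s}=b_{s-1}P_{r,s-1}+a_sP_{r,s}+c_{s+1}P_{r,s+1}$ and verifying the resulting Gaussian-coefficient identity --- and that step is only described, never performed; you neither determine the intersection numbers $a_s,b_s,c_s$ of the dual polar graph nor resolve your own concern about the moving summation limits. Moreover, the one concrete ingredient you supply beyond the base case is incorrect: setting $s=1$ in the claimed formula leaves the two terms $t=r$ and $t=r-1$ and gives $P_{r,1}=q^{\epsilon}\gauss{d-r}{1}-\gauss{r}{1}$ (consistent with the valency $P_{0,1}=q^{\epsilon}\gauss{d}{1}$ from Corollary \ref{kor_dis_gens}), not $q^{1+\epsilon}\gauss{d-r}{1}-\gauss{r}{1}$; seeding the induction with this wrong $P_{r,1}$ would make the verification fail. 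The strategy itself is sound --- for a $P$-polynomial scheme the eigenvalues are determined by the intersection array, so checking the recurrence together with $P_{r,0}=1$ would suffice --- and, if executed, it would give an independent verification that the paper does not contain; but as submitted the proposal establishes nothing beyond what the citation already covers.
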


By Lemma 2.2.1 (ii) of \cite{brouwer1989distance}, we see that $P_{0s}$ is the number of
generators which meet a fixed generator in codimension $s$.
Hence, the previous formula yields the following well-known result.

\begin{kor}\label{kor_dis_gens}
  In a polar space of type $\epsilon$ and rank $d$, exactly
  \begin{align*}
    \gauss{d}{d-s} q^{\binom{s}{2} + s \epsilon}
  \end{align*}
  generators meet a fixed generator in codimension $s$.
\end{kor}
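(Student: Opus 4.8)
The plan is to read this off directly from Theorem \ref{thm_ev_gen} by specializing the general eigenvalue formula to the case $r=0$. The key conceptual point, which the paragraph preceding the statement already records via Lemma 2.2.1 (ii) of \cite{brouwer1989distance}, is that the eigenvalue $P_{0s}$ of $A_s$ on the trivial eigenspace $V_0$ equals the row sum of $A_s$, that is the $s$-valency $n_s$. Since two generators are in relation $R_s$ exactly when they meet in codimension $s$, this valency is precisely the quantity we want to count. So it suffices to evaluate the formula of Theorem \ref{thm_ev_gen} at $r=0$.

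First I would examine the summation range in Theorem \ref{thm_ev_gen}. With $r=0$ the lower limit is $\max(r-s,0)=\max(-s,0)=0$ (using $s\ge 0$) and the upper limit is $\min(d-s,r)=\min(d-s,0)=0$ (using $s\le d$). Hence the entire sum collapses to the single summand $t=0$, and no cancellation or telescoping is needed.

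Next I would substitute $r=0$, $t=0$ into the surviving term. The sign $(-1)^{r-t}$ becomes $(-1)^0=1$; the factor $\gauss{r}{t}=\gauss{0}{0}=1$; the remaining Gaussian coefficient is $\gauss{d-r}{d-s-t}=\gauss{d}{d-s}$; and the exponent of $q$ reduces to $\binom{r-t}{2}+\binom{s-r+t}{2}+(s-r+t)\epsilon=\binom{0}{2}+\binom{s}{2}+s\epsilon=\binom{s}{2}+s\epsilon$. Multiplying these together yields
\begin{align*}
  P_{0s} = \gauss{d}{d-s}\, q^{\binom{s}{2}+s\epsilon},
\end{align*}
which is exactly the claimed count.

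I do not expect any real obstacle here: the corollary is a pure specialization, and the only step requiring a moment's attention is verifying that the index range genuinely forces $t=0$ so that one single clean term remains. Once that is checked, the arithmetic simplification of the exponent and the identification $P_{0s}=n_s$ finish the argument immediately.
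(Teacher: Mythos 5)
Your proposal is correct and follows exactly the paper's route: the paper likewise cites Lemma 2.2.1 (ii) of \cite{brouwer1989distance} to identify $P_{0s}$ with the valency $n_s$ and then reads the formula off Theorem \ref{thm_ev_gen}; your only addition is to spell out the (correct) check that the summation range collapses to the single term $t=0$.
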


We are interested in the eigenvalues of $\sum_{s=0}^a A_{d-s}$, so we shall have to explicitly calculate these.
Since the $V_r$ are the common eigenspaces of $A_0, \ldots, A_d$, the $V_r$ are subspaces of eigenspaces of $\sum_{s=0}^a A_{d-s}$, and the eigenvalue of $\sum_{s=0}^a A_{d-s}$ on $V_r$ is
\[
\lambda_r^a := \sum_{s=0}^a P_{r,d-s}.
\]

\begin{satz}\label{thm_formula_ev}\label{lem_formula_ev}
  For $a < d$ we have
  \begin{align*}
    \lambda_{0}^a = \sum_{s=0}^{a} \gauss{d}{s} q^{\binom{d-s}{2} + (d-s)\epsilon}.
  \end{align*}
  For $a < d$ and $r > 0$ we have
  \begin{align*}
    \lambda_r^{a}  = (-1)^{r+a} \sum_{s=\max( a-r+1, 0)}^{\min(a, d-r)} (-1)^s A(r, s, a)
  \end{align*}
  where
  \begin{align*}
    A(r, s, a) := \gauss{d-r}{s} q^{\binom{d-r-s}{2} + (d-r-s)\epsilon} \gauss{r-1}{a-s} q^{\binom{r-a+s}{2}}.
  \end{align*}
\end{satz}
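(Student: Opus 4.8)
The plan is to evaluate the sum $\lambda_r^a = \sum_{s=0}^a P_{r,d-s}$ by substituting Vanhove's eigenvalue formula from Theorem \ref{thm_ev_gen} and then simplifying the resulting double sum. First I would write out $P_{r,d-s}$ using Theorem \ref{thm_ev_gen} with the second index equal to $d-s$; the inner summation index there (call it $t$) runs over $\max(r-(d-s),0)\le t\le \min(s,r)$, and the summand carries the factor $q^{\binom{r-t}{2}+\binom{(d-s)-r+t}{2}+((d-s)-r+t)\epsilon}\gauss{r}{r-t}\gauss{d-r}{s-t}$ after rewriting the Gaussian coefficients via \eqref{gauss_duality}. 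The case $r=0$ is immediate: only $t=0$ survives, $\gauss{r}{t}=1$, and $P_{0,d-s}=\gauss{d}{s}q^{\binom{d-s}{2}+(d-s)\epsilon}$, which gives the stated formula for $\lambda_0^a$ directly with no further work.

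For $r>0$ the substance is to interchange the order of summation, summing over $s$ for each fixed $t$, and to recognize the inner $s$-sum as an instance of the alternating Gaussian identity in Lemma \ref{gauss_alt_sum}. Concretely, I would reindex so that the factor depending on $s$ alone is an alternating sum of the shape $\sum (-1)^{\bullet}\gauss{d-r}{s-t}q^{\binom{(d-r)-(s-t)}{2}+\cdots}$; after absorbing the $\epsilon$-exponent and the $t$-dependent prefactors, Lemma \ref{gauss_alt_sum} should collapse each inner sum into a single Gaussian coefficient of the form $\gauss{d-r-1}{\cdot}$, and matching the telescoped exponent against $\binom{d-r-s}{2}+(d-r-s)\epsilon$ in $A(r,s,a)$ should reproduce the claimed closed form. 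The remaining factor $\gauss{r-1}{a-s}q^{\binom{r-a+s}{2}}$ and the leading sign $(-1)^{r+a}$ are then bookkeeping: I would track the signs through the substitution $t\mapsto$ the new summation variable and verify that the summation range $\max(a-r+1,0)\le s\le\min(a,d-r)$ emerges from the truncation at $a$ together with the support conditions on the Gaussian coefficients (the upper cutoff $\min(a,d-r)$ from $\gauss{d-r}{s}$ and the lower cutoff $a-r+1$ from $\gauss{r-1}{a-s}$ forcing $a-s\le r-1$).

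The main obstacle I expect is the exponent arithmetic: getting the three quadratic-binomial exponents plus the $\epsilon$-linear term to combine correctly so that Lemma \ref{gauss_alt_sum} applies cleanly, and so that the surviving exponent is exactly $\binom{d-r-s}{2}+(d-r-s)\epsilon+\binom{r-a+s}{2}$. Identities such as $\binom{m}{2}+m=\binom{m+1}{2}$ and the shift relation $\binom{n-k+1}{2}=\binom{n-k}{2}+(n-k)$ used in the proof of Lemma \ref{gauss_alt_sum} will be needed repeatedly, and a sign error or an off-by-one in the binomial exponents is the likeliest failure point. A sensible safeguard is to cross-check the final formula against Corollary \ref{kor_dis_gens} in a degenerate regime (for instance the contribution of $A_d$ alone, i.e.\ $a=0$, where $\lambda_r^0=P_{r,d}$) and against the known value $\lambda_0^a$ at $r=0$, so that the constants and signs are pinned down before trusting the general simplification.
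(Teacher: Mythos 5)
Your overall strategy---substituting Vanhove's formula from Theorem \ref{thm_ev_gen} into $\lambda_r^a=\sum_{s=0}^aP_{r,d-s}$, interchanging the order of summation, and collapsing one of the two sums with Lemma \ref{gauss_alt_sum}---is the same as the paper's, and your treatment of $r=0$ and of the final summation range is fine. But you apply the key identity to the wrong inner sum, and that step would fail. After substitution the summand is
\[
(-1)^{r-t}\gauss{d-r}{s-t}\gauss{r}{t}\,q^{\binom{r-t}{2}+\binom{d-r+t-s}{2}+(d-r+t-s)\epsilon},
\]
so the alternating sign is attached to $t$, not to $s$: for fixed $t$ the $s$-sum has constant sign and, worse, carries the $s$-dependent exponent $(d-r+t-s)\epsilon$, neither of which fits the hypotheses of Lemma \ref{gauss_alt_sum}. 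That sum is not collapsible at all---it survives into the final answer as the outer sum $\sum_s(-1)^sA(r,s,a)$, whose factor $\gauss{d-r}{s}q^{\binom{d-r-s}{2}+(d-r-s)\epsilon}$ is exactly the untouched $s$-part. Your predicted output $\gauss{d-r-1}{\cdot}$ also contradicts the target formula, which contains $\gauss{d-r}{s}$ with undiminished top entry; the coefficient whose top entry drops by one is $\gauss{r-1}{a-s}$, the fingerprint of the identity having been applied to the $t$-sum.

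The correct execution, as in the paper, is: swap so that $t$ is outermost, reindex $s\mapsto s+t$ so that the exponent $\binom{d-r+t-s}{2}+(d-r+t-s)\epsilon$ becomes $t$-free, swap back, and observe that the inner sum is now $\sum_{t=0}^{\min(a-s,r)}(-1)^{r-t}\gauss{r}{t}q^{\binom{r-t}{2}}$, which Lemma \ref{gauss_alt_sum} (with $n=r$) evaluates to $(-1)^{r+a-s}\gauss{r-1}{a-s}q^{\binom{r-a+s}{2}}$; the same lemma shows the terms with $s\le a-r$ vanish, which produces the lower limit $\max(a-r+1,0)$. As written, your plan stalls at the point where you try to telescope the $s$-sum, so the argument needs to be reorganized around collapsing the $t$-sum instead.
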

\begin{proof}
  The formula for $\lambda_0^a$ follows from Theorem \ref{thm_ev_gen}. For $\lambda_r^a$ with $r > 0$ we start with Theorem \ref{thm_ev_gen} to see
  \begin{align*}
    \lambda_r^{a} &= \sum_{s=0}^a \sum_{t=\max(r+s-d, 0)}^{\min(s,r)} (-1)^{r-t} \gauss{d-r}{s-t} \gauss{r}{t} q^{\binom{r-t}{2} + \binom{d-r+t-s}{2} + (d-r+t-s)\epsilon}\\
    &= \sum_{t=0}^{r} (-1)^{r-t} \gauss{r}{t} q^{\binom{r-t}{2}} \sum_{s=t}^{\min( a, d-r+t)} \gauss{d-r}{s-t} q^{\binom{d-r+t-s}{2} + (d-r+t-s)\epsilon}\\
    &= \sum_{t=0}^{r} (-1)^{r-t} \gauss{r}{t} q^{\binom{r-t}{2}} \sum_{s=0}^{\min( a-t, d-r)} \gauss{d-r}{s} q^{\binom{d-r-s}{2} + (d-r-s)\epsilon}\\
    &= \sum_{s=0}^{\min( a, d-r)} \gauss{d-r}{s} q^{\binom{d-r-s}{2} + (d-r-s)\epsilon} \sum_{t=0}^{\min(a-s,r)} (-1)^{r-t} \gauss{r}{t} q^{\binom{r-t}{2}}\\
    &\stackrel{\ref{identity_0}}{=} \sum_{s=\max(a-r, 0)}^{\min( a, d-r)} \gauss{d-r}{s} q^{\binom{d-r-s}{2} + (d-r-s)\epsilon} \sum_{t=0}^{a-s} (-1)^{r-t} \gauss{r}{t} q^{\binom{r-t}{2}}\\
    &\stackrel{\ref{identity_0}}{=} \sum_{s=\max(a-r+1, 0)}^{\min\{ a, d-r\}} \gauss{d-r}{s} q^{\binom{d-r-s}{2} + (d-r-s)\epsilon} (-1)^{r+a-s} \gauss{r-1}{a-s} q^{\binom{r-a+s}{2}}\\
    &= (-1)^{r+a} \sum_{s=\max(a-r+1, 0)}^{\min( a, d-r)} (-1)^s \gauss{d-r}{s} q^{\binom{d-r-s}{2} + (d-r-s)\epsilon} \gauss{r-1}{a-s} q^{\binom{r-a+s}{2}}\\
    &= (-1)^{r+a} \sum_{s=\max(a-r+1, 0)}^{\min( a, d-r)} (-1)^s A(r, s, a).
  \end{align*}
\end{proof}

\begin{kor}\label{kor_formula_spec}
  For $a \leq d-1$ we have
  \begin{enumerate}[(a)]
   \item
   \begin{align*}
   \lambda_1^a &= -\gauss{d-1}{a} q^{\binom{d-a-1}{2} + (d-a-1)\epsilon} = -A(1, a, a),
  \end{align*}
   \item
  \begin{align*}
    \lambda_d^a &= (-1)^{d-a} \gauss{d-1}{a} q^{\binom{d-a}{2}} = (-1)^{d+a} A(d, 0, a),
  \end{align*}
   \item
   \begin{align*}
    \lambda_{d-1}^a &= (-1)^{d-1+a}\gauss{d-2}{a} q^{\binom{d-a-1}{2} + \epsilon} +(-1)^{d+a} \gauss{d-2}{a-1} q^{\binom{d-a}{2}}.
   \end{align*}
  \end{enumerate}
\end{kor}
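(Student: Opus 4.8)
The plan is to obtain all three identities by directly specializing the closed formula for $\lambda_r^a$ from Theorem \ref{thm_formula_ev}. The entire content of the corollary is that for the three extreme values $r=1$, $r=d$, and $r=d-1$ the summation range $\max(a-r+1,0)\le s\le \min(a,d-r)$ collapses to just one or two terms, after which the summand $A(r,s,a)$ simplifies because the small Gaussian coefficients and binomial exponents that appear are trivial. So no new idea is needed beyond careful substitution; I will use throughout that $\gauss{0}{0}=\gauss{1}{0}=\gauss{1}{1}=1$ and $\binom{0}{2}=\binom{1}{2}=0$.

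For part (a) I set $r=1$. Then the lower endpoint is $\max(a,0)=a$ and the upper endpoint is $\min(a,d-1)=a$ (using $0\le a\le d-1$), so only $s=a$ survives. Here $A(1,a,a)=\gauss{d-1}{a}q^{\binom{d-a-1}{2}+(d-a-1)\epsilon}\gauss{0}{0}q^{\binom{1}{2}}$, and since $\gauss{0}{0}=1$ and $\binom{1}{2}=0$ this equals $\gauss{d-1}{a}q^{\binom{d-a-1}{2}+(d-a-1)\epsilon}$; the prefactor is $(-1)^{1+a}(-1)^a=-1$, giving $\lambda_1^a=-A(1,a,a)$. For part (b) I set $r=d$, so the range is $\max(a-d+1,0)\le s\le\min(a,0)=0$, forcing $s=0$, and $A(d,0,a)=\gauss{0}{0}q^{0}\gauss{d-1}{a}q^{\binom{d-a}{2}}=\gauss{d-1}{a}q^{\binom{d-a}{2}}$; the sign is $(-1)^{d+a}=(-1)^{d-a}$, yielding the stated formula. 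For part (c) I set $r=d-1$, so the generic range is $s\in\{0,1\}$, and I compute the two summands $A(d-1,0,a)=\gauss{d-2}{a}q^{\binom{d-a-1}{2}+\epsilon}$ and $A(d-1,1,a)=\gauss{d-2}{a-1}q^{\binom{d-a}{2}}$; multiplying by $(-1)^{d-1+a}$ and $(-1)^{d-1+a}(-1)=(-1)^{d+a}$ respectively gives exactly the claimed two-term expression.

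The only point requiring a moment of care, and the closest thing to an obstacle, is the behaviour of the index range at the boundary values $a=0$ and $a=d-1$ in part (c): when $a=0$ the upper endpoint $\min(a,1)=0$ drops the $s=1$ term, and when $a=d-1$ the lower endpoint $\max(a-d+2,0)=1$ drops the $s=0$ term. In both cases the stated formula nonetheless holds verbatim, because the omitted summand carries a Gaussian coefficient that already vanishes: $\gauss{d-2}{a-1}=\gauss{d-2}{-1}=0$ when $a=0$, and $\gauss{d-2}{a}=\gauss{d-2}{d-1}=0$ when $a=d-1$. Thus the two-term expression is valid uniformly for all $0\le a\le d-1$, and the verification is complete once these degenerate Gaussian coefficients are observed to cancel the spurious terms.
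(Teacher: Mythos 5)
Your proof is correct and takes the same route as the paper, which states this corollary without an explicit proof precisely because it is the direct specialization of Theorem \ref{thm_formula_ev} at $r=1,d,d-1$ that you carry out. Your extra care with the degenerate boundary cases $a=0$ and $a=d-1$ in part (c), where the vanishing Gaussian coefficients $\gauss{d-2}{-1}$ and $\gauss{d-2}{d-1}$ absorb the missing summand, is a welcome addition.
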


\section{Hoffman's Bound}\label{sec_hoffman}

The famous bound by Hoffman on independent sets restricts the maximum size $c_{d,t}$ of a $(d, t)$-EKR set.
It is known that this bound is sharp for $t=d-1$ except when $\epsilon = \frac{1}{2}$ and $d$ is odd \cite{MR2755082}.

\begin{prop}[Hoffman's Bound {\cite[Proposition 3.7.2]{brouwer1989distance}}]\label{hoffman_bound}
  ~
  \begin{align*}
   c_{d,t} \leq \dfrac{n \lambda_{\min}}{\lambda_{\min} - k},
  \end{align*}
  where $\lambda_{\min} := \min_r \lambda_r^{d-t-1}$ is the smallest eigenvalue of the matrix $\sum_{s=t+1}^d A_{s}$, and $k = \lambda_{0}^{d-t-1}$ is the valency of the graph associated to this matrix.
\end{prop}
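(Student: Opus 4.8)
The plan is to reprove the ratio bound of Hoffman in the language of the association scheme set up above, the key observation being that a $(d,t)$-EKR set is precisely an independent set (coclique) of the graph $G$ whose adjacency matrix is $M := \sum_{s=t+1}^d A_s$: two generators lie together in an EKR set exactly when they meet in codimension at most $t$, i.e. when they are \emph{non}-adjacent in $G$. The matrix $M$ is real symmetric, it is $k$-regular with $k=\lambda_0^{d-t-1}$ (this is the valency $\sum_{s=t+1}^d n_s$), and, since the $V_r$ are common eigenspaces of all the $A_i$, they are eigenspaces of $M$ with eigenvalues $\lambda_r:=\lambda_r^{d-t-1}$. In particular the all-one vector $\mathbf 1$ spans $\operatorname{im}(E_0)=\operatorname{im}(\tfrac1n J)$ and is the eigenvector for $\lambda_0=k$.

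First I would fix a $(d,t)$-EKR set $Y$ and let $v\in\bbC^{n}$ be its characteristic $\{0,1\}$-vector, and record three facts. Independence of $Y$ in $G$ gives $v^\top M v=0$. A count gives $v^\top v=\mathbf 1^\top v=|Y|$. And projection onto $V_0$ gives $E_0 v=\tfrac1n Jv=\tfrac{|Y|}{n}\mathbf 1$, so $\|E_0 v\|^2=|Y|^2/n$.

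Next I would decompose $v=\sum_{r=0}^d E_r v$ orthogonally along the eigenspaces and evaluate the quadratic form. Using $ME_r=\lambda_r E_r$, $\lambda_0=k$, and $\lambda_r\ge\lambda_{\min}$ for all $r$,
\begin{align*}
  0 = v^\top M v = \sum_{r=0}^d \lambda_r \|E_r v\|^2 \geq k\,\|E_0 v\|^2 + \lambda_{\min}\sum_{r=1}^d \|E_r v\|^2 .
\end{align*}
Since $\sum_{r=1}^d\|E_r v\|^2=\|v\|^2-\|E_0 v\|^2=|Y|-|Y|^2/n$, substituting the values above yields
\begin{align*}
  0 \geq k\,\frac{|Y|^2}{n} + \lambda_{\min}\Big(|Y|-\frac{|Y|^2}{n}\Big).
\end{align*}
Dividing by $|Y|>0$ and rearranging gives $\tfrac{|Y|}{n}(k-\lambda_{\min})\le -\lambda_{\min}$, that is $|Y|\le \tfrac{n\lambda_{\min}}{\lambda_{\min}-k}$, as claimed.

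The argument is entirely formal; the only points requiring care are the sign conventions and verifying $\lambda_{\min}<0$, so that dividing by $k-\lambda_{\min}$ preserves the direction of the inequality. This holds whenever $G$ has an edge, i.e. whenever $t\le d-1$: then $k=\lambda_0^{d-t-1}$ is the Perron eigenvalue of a non-empty regular graph, while $\operatorname{tr} M=0$ (each $A_s$ with $s\ge t+1\ge1$ has zero diagonal) forces the eigenvalues to sum to zero and hence some eigenvalue to be negative. Under the hypotheses on $t$ in this paper this is automatic, so no estimation is needed at this stage; the genuine difficulty lies not in the bound itself but in evaluating $\lambda_{\min}$ and $k$ explicitly, which is done via Theorem \ref{thm_formula_ev} in the sections that follow.
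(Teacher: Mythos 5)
Your proof is correct. The paper itself gives no argument for this statement --- it simply cites \cite[Proposition 3.7.2]{brouwer1989distance} --- and what you have written is the standard ratio-bound proof (decompose the characteristic vector along the common eigenspaces $V_r$, use $v^\top M v=0$ for a coclique, and bound the quadratic form below by $k\|E_0v\|^2+\lambda_{\min}(\|v\|^2-\|E_0v\|^2)$), which is essentially the proof given in the cited reference; you also correctly note the only point needing care, namely that $\operatorname{tr}M=0$ and $k>0$ force $\lambda_{\min}<0$ so the final division is legitimate.
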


To our knowledge the smallest eigenvalue of $\sum_{s=t+1}^d A_{d-s}$ was never calculated except for special cases such as $t = d-1$, so this section is concerned about approximating $\lambda_{\min}$.
Our claim is the following:

\begin{satz}\label{thm_smallest_ev}
  For $a \leq d-1$, and $q \geq 3$, the following holds:
  \begin{enumerate}[(a)]
   \item $|\lambda_1^a|=\max\{ |\lambda_r^a|: r = 1, \ldots, d \}$ if $\epsilon \geq 1$.
   \item $|\lambda_d^a|=\max\{ |\lambda_r^a|: r = 1, \ldots, d \}$ if $\epsilon \leq 1$.
   \item $\lambda_1^a=\min\{ \lambda_r^a: r = 1, \ldots, d \}$ if $d - a$ even or $\epsilon \geq 1$.
   \item $\lambda_d^a=\min\{ \lambda_r^a: r = 1, \ldots, d \}$ if $d - a$ odd and $\epsilon \leq 1$.
  \end{enumerate}
\end{satz}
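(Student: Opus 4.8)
The plan is to reduce everything to the two boundary eigenvalues $\lambda_1^a$ and $\lambda_d^a$ and then to control all intermediate $\lambda_r^a$ by a single term estimate. First I would invoke Corollary \ref{kor_formula_spec} to record that $\lambda_1^a=-A(1,a,a)<0$ while $\lambda_d^a=(-1)^{d-a}A(d,0,a)$, and that
\[
\frac{|\lambda_1^a|}{|\lambda_d^a|}=q^{(d-a-1)(\epsilon-1)}.
\]
Since $a\le d-1$, this ratio is at least $1$ exactly when $\epsilon\ge 1$, so among the two endpoints the larger magnitude is $|\lambda_1^a|$ for $\epsilon\ge 1$ and $|\lambda_d^a|$ for $\epsilon\le 1$. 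With this in hand, (a) and (b) become the single assertion that $|\lambda_r^a|\le\max(|\lambda_1^a|,|\lambda_d^a|)$ for every $r$, and for (c) and (d) it suffices to prove $\lambda_r^a\ge\min(\lambda_1^a,\lambda_d^a)$: the sign of $\lambda_d^a$, namely $(-1)^{d-a}$, together with the ratio above then selects the correct endpoint in each case.

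The core estimate I would extract from the alternating-sum shape in Theorem \ref{thm_formula_ev}, namely $\lambda_r^a=(-1)^{r+a}\sum_s(-1)^sA(r,s,a)$ with all $A(r,s,a)\ge 0$. The first step is to show that $s\mapsto A(r,s,a)$ is unimodal (indeed log-concave): computing the ratio of consecutive terms, each of the two Gaussian factors contributes roughly $-2\log q$ to the second difference of $\log A(r,s,a)$, whereas the $q$-power contributes $+2\log q$, so the net second difference is negative; for $q\ge 3$ the lower-order corrections coming from the $-1$'s in the Gaussian coefficients (controlled by the inequalities of Section \ref{sec_ests}) do not spoil this. A unimodal nonnegative sequence is a nonnegative combination of indicators of intervals all containing its peak, and the alternating sum over any interval lies in $\{-1,0,1\}$; hence $|\lambda_r^a|\le\max_sA(r,s,a)$, with no spurious factor of $2$.

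It then remains to bound the peak term by the appropriate endpoint. Writing $x=d-r-s$ and $y=r-a+s$, so that $x+y=d-a$ and $y\ge 1$ on the summation range, the identity $\binom{x+y}{2}=\binom{x}{2}+\binom{y}{2}+xy$ shows that the exponent of $q$ in $A(r,s,a)$ equals $\binom{d-a}{2}-x(y-\epsilon)$. Feeding a single term of the $q$-Vandermonde expansion of $\gauss{d-1}{a}$ into this gives, after simplification, $A(r,s,a)\le A(d,0,a)\,q^{-x(r-\epsilon)}$ and $A(r,s,a)\le A(1,a,a)\,q^{(x+s)(1-y)}$. Since $x,\,x+s\ge 0$, $r\ge 1$ and $y\ge 1$, the first yields $A(r,s,a)\le A(d,0,a)$ whenever $\epsilon\le 1$ and the second yields $A(r,s,a)\le A(1,a,a)$ whenever $\epsilon\ge 1$. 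Combined with the unimodal bound this proves $|\lambda_r^a|\le\max(|\lambda_1^a|,|\lambda_d^a|)$, establishing (a) and (b); and since $\lambda_1^a<0$, it also gives $\lambda_r^a\ge\min(\lambda_1^a,\lambda_d^a)$ in every case where the dominant-magnitude endpoint is itself negative, which covers (d), the odd part of (c), and all of the $\epsilon\ge 1$ part of (c).

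The one case this clean argument does not reach—and the step I expect to be the main obstacle—is $\epsilon<1$ together with $d-a$ even. There $\lambda_d^a=A(d,0,a)>0$ carries the largest magnitude, so the bound $|\lambda_r^a|\le A(d,0,a)$ is too weak to force $\lambda_r^a\ge\lambda_1^a=-A(1,a,a)$; one genuinely needs that the \emph{negative} eigenvalues individually satisfy $|\lambda_r^a|\le A(1,a,a)$, even though their common magnitude bound is the larger quantity $A(d,0,a)$. Resolving this requires a sign-sensitive refinement of the term estimate, tracking the parity of the peak index so that the eigenvalues sharing the sign of $\lambda_1^a$ are compared against $A(1,a,a)$ rather than against $A(d,0,a)$. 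This is precisely where the hypothesis that $d-a$ is even (which pins $\lambda_d^a$ to be positive, hence irrelevant for the minimum) is used, and where the sharper Gaussian estimates of Section \ref{sec_ests} would do the real work.
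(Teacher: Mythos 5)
Your overall strategy (compare the two endpoint eigenvalues, then dominate every intermediate $|\lambda_r^a|$ by a single peak term $A(r,s,a)$ via unimodality of $s\mapsto A(r,s,a)$) is exactly the paper's: the endpoint comparison is Proposition \ref{prop_simple_comp}, the unimodality is Lemma \ref{lem_monotony_ev}, and the peak bound is Corollary \ref{kor_ev_upper_bnd}. However, there are two genuine problems. First, your second single-term estimate $A(r,s,a)\le A(1,a,a)\,q^{(x+s)(1-y)}$ cannot be correct as written: its exponent is independent of $\epsilon$ and nonpositive (since $y\ge 1$ on the summation range), so it would give $A(d,0,a)\le A(1,a,a)$ for every $\epsilon$, contradicting your own computation $|\lambda_1^a|/|\lambda_d^a|=q^{(d-a-1)(\epsilon-1)}<1$ for $\epsilon<1$. (Your first estimate $A(r,s,a)\le A(d,0,a)q^{-x(r-\epsilon)}$ does follow from $q$-Vandermonde; the correct analogue of the second has exponent $-(y-1)(x-1+\epsilon)-x(a-s)$, which is nonpositive for $\epsilon\ge 1$ but not manifestly so when $\epsilon<1$ and $x=0$, so even the repaired version needs supplementary estimates there.)

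Second, and more importantly, you leave part (c) unproven in the case $\epsilon<1$ with $d-a$ even, describing it as the main obstacle and gesturing at a ``sign-sensitive refinement'' that you do not supply. This is a real gap in the proposal, but the obstacle is illusory: the correct move is to prove the stronger statement $|\lambda_r^a|\le|\lambda_1^a|$ for \emph{all} $2\le r\le d-1$ and \emph{all} $\epsilon$, not merely $|\lambda_r^a|\le\max(|\lambda_1^a|,|\lambda_d^a|)$. That stronger bound is what the paper establishes: $r=d-1$ via Proposition \ref{prop_simple_comp}, and $2\le r\le d-2$ by comparing $q$-degrees, showing $f(1,a)\ge f(r,s)+1$ for the degree $f(r,s)$ of $A(r,s,a)$ and absorbing constants via $A(1,a,a)\ge\tfrac43 q^{f(1,a)}$ and $A(r,s,a)\le 4q^{f(r,s)}$ from Lemmas \ref{lem_lowest_gauss} and \ref{lem_upper_bnd_gauss}, using $q\ge 3$. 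Once you have $|\lambda_r^a|\le|\lambda_1^a|$ for $2\le r\le d-1$, the fact that $\lambda_1^a=-A(1,a,a)<0$ gives $\lambda_r^a\ge-|\lambda_r^a|\ge\lambda_1^a$ for those $r$ regardless of their signs, and in your ``hard'' case $\lambda_d^a>0>\lambda_1^a$, so (c) follows with no parity- or sign-tracking at all. What is needed is simply not to weaken the intermediate bound to the maximum of the two endpoint magnitudes; no sign-sensitive refinement of the Gaussian estimates is required. You should also note that the boundary cases $a=d-1$ and $d\le 2$ (where the degree comparison degenerates) need the separate direct check the paper performs in Lemma \ref{lem_thm_smallest_ev_a_eq_dm1}.
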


We shall prove Theorem \ref{thm_smallest_ev} in several steps.

\begin{lemma}\label{lem_thm_smallest_ev_a_eq_dm1}
  Theorem \ref{thm_smallest_ev} holds for $a = d-1$ and for $d\le 2$.
\end{lemma}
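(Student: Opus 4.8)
The plan is to treat the two cases $a = d-1$ and $d \le 2$ separately, since in each the eigenvalues $\lambda_r^a$ are given by very short sums that can be evaluated in closed form from Theorem \ref{thm_formula_ev} and Corollary \ref{kor_formula_spec}, after which one checks the four assertions of Theorem \ref{thm_smallest_ev} directly.

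First I would handle $a = d-1$. Here the summation range in Theorem \ref{thm_formula_ev} runs from $\max(a-r+1,0) = d-r$ up to $\min(a, d-r) = d-r$, so the sum collapses to the single term $s = d-r$. Substituting $s = d-r$, $a = d-1$ into $A(r,s,a)$ gives $\gauss{d-r}{d-r} = 1$, an exponent $\binom{0}{2} + 0\cdot\epsilon = 0$, then $\gauss{r-1}{r-1} = 1$ and $\binom{1}{2} = 0$, so $A(r, d-r, d-1) = 1$ and hence $\lambda_r^{d-1} = -1$ for every $r \in \{1, \ldots, d\}$. (As a sanity check, $\sum_{s=0}^{d-1} A_{d-s} = \sum_{i=1}^{d} A_i = J - I$, whose eigenvalue on each $V_r$ with $r \ge 1$ is $-1$ since $E_0 = \tfrac1n J$.) As all of $\lambda_1^{d-1}, \ldots, \lambda_d^{d-1}$ equal $-1$, both the maximum of the $|\lambda_r^{d-1}|$ and the minimum of the $\lambda_r^{d-1}$ are attained simultaneously at $r = 1$ and at $r = d$; thus parts (a)--(d) hold trivially, independently of $\epsilon$ and of the parity of $d-a$.

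Next I would treat $d \le 2$. For $d = 1$ the only admissible value is $a = 0 = d-1$, and for $d = 2$ the value $a = 1 = d-1$ is covered by the previous paragraph, so the only genuinely new case is $d = 2$, $a = 0$, where $r$ ranges over $\{1, 2\}$. Corollary \ref{kor_formula_spec} gives $\lambda_1^0 = -q^{\epsilon}$ and $\lambda_2^0 = \lambda_d^0 = q$. Comparing $|\lambda_1^0| = q^{\epsilon}$ with $|\lambda_2^0| = q$ reduces everything to the sign of $\epsilon - 1$: if $\epsilon \ge 1$ then $q^{\epsilon} \ge q$, which gives part (a); if $\epsilon \le 1$ then $q^{\epsilon} \le q$, which gives part (b). For the minima, $\lambda_1^0 = -q^{\epsilon} < 0 < q = \lambda_2^0$ shows $\lambda_1^0 = \min_r \lambda_r^0$, and since $d - a = 2$ is even this settles part (c); part (d) is vacuous because its hypothesis that $d - a$ be odd fails.

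There is no serious obstacle in this lemma: it is a base-case computation whose output feeds the later comparison and induction arguments establishing Theorem \ref{thm_smallest_ev} for general $a$. The only points that require care are the correct collapse of the summation range to a single term when $a = d-1$, and the bookkeeping confirming that the range $d \le 2$ contributes only the single new pair $(d,a) = (2,0)$; both are routine once the explicit formulas of Corollary \ref{kor_formula_spec} are in hand.
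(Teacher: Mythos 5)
Your proof is correct and follows the same route as the paper: it establishes $\lambda_r^{d-1}=-1$ for all $r\ge 1$ from Theorem \ref{thm_formula_ev} to dispose of $a=d-1$ (and hence $(d,a)\in\{(1,0),(2,1)\}$), and then checks the single remaining case $(d,a)=(2,0)$ via $\lambda_1^0=-q^{\epsilon}$, $\lambda_2^0=q$ from Corollary \ref{kor_formula_spec}. The only difference is that you spell out the collapse of the sum to the term $s=d-r$ and the case analysis in $\epsilon$, which the paper leaves implicit.
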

\begin{proof}
  Theorem \ref{thm_formula_ev} shows that $\lambda_{r}^{d-1} = -1$ for all $r \in \{1, \ldots, d\}$, so Theorem \ref{thm_smallest_ev} holds for $a=d-1$, and in particular for $(d, a) \in \{ (2, 1), (1, 0) \}$. For $(d, a) = (2, 0)$, Corollary \ref{kor_formula_spec} gives $\lambda_1^0 = -q^e$ and $\lambda_2^0 = q$ and again the assertion follows.
\end{proof}

\begin{prop}\label{prop_simple_comp}
  Let $a \leq d-2$, $q \geq 3$. Then
  \begin{enumerate}[(a)]
   \item \label{prop_simple_comp_a}
  \begin{align*}
    |\lambda_1^a| - |\lambda_d^a|
    \begin{cases}
      > 0 & \text{ if } \epsilon > 1,\\
      = 0 & \text{ if } \epsilon = 1,\\
      < 0 & \text{ if } \epsilon < 1.
    \end{cases}
  \end{align*}
  \item If $d \geq 3$, then \label{prop_simple_comp_b}
    \begin{align*}
     |\lambda_1^a|, |\lambda_d^a| \geq |\lambda_{d-1}^a|
    \end{align*}
  \end{enumerate}
\end{prop}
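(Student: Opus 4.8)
The plan is to reduce everything to the closed forms in Corollary \ref{kor_formula_spec} and then compare magnitudes by factoring out the common pieces. Recall
\[
|\lambda_1^a| = \gauss{d-1}{a} q^{\binom{d-a-1}{2}+(d-a-1)\epsilon}, \qquad |\lambda_d^a| = \gauss{d-1}{a} q^{\binom{d-a}{2}},
\]
and set $B_1 := \gauss{d-2}{a} q^{\binom{d-a-1}{2}+\epsilon}$ and $B_2 := \gauss{d-2}{a-1} q^{\binom{d-a}{2}}$, so that Corollary \ref{kor_formula_spec}(c) reads $\lambda_{d-1}^a = (-1)^{d+a}(B_2 - B_1)$ with $B_1,B_2\ge 0$. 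Throughout I would use the two elementary Gaussian ratios $\gauss{d-1}{a}/\gauss{d-2}{a} = (q^{d-1}-1)/(q^{d-1-a}-1)$ and $\gauss{d-1}{a}/\gauss{d-2}{a-1} = (q^{d-1}-1)/(q^a-1)$, together with the identity $\binom{d-a}{2}-\binom{d-a-1}{2}=d-a-1$.

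For part (a), since $\lambda_1^a$ and $\lambda_d^a$ carry the same Gaussian coefficient $\gauss{d-1}{a}$, the quotient $|\lambda_1^a|/|\lambda_d^a|$ is a pure power of $q$ whose exponent is $(d-a-1)(\epsilon-1)$. As $a\le d-2$ forces $d-a-1\ge 1$, the sign of this exponent equals the sign of $\epsilon-1$, and this gives the claimed trichotomy at once.

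For part (b) I would first use $|\lambda_{d-1}^a| = |B_2-B_1| \le \max(B_1,B_2)$ (valid because $B_1,B_2\ge 0$) and then establish the four comparisons $B_1,B_2 \le |\lambda_1^a|$ and $B_1,B_2 \le |\lambda_d^a|$; together they yield $|\lambda_{d-1}^a|\le\min(|\lambda_1^a|,|\lambda_d^a|)$, which is exactly the assertion and avoids having to decide in advance which of the two is smaller. Each comparison factors into one of the Gaussian ratios above times a power of $q$. Two of them, $B_1 \le |\lambda_1^a|$ and $B_2 \le |\lambda_d^a|$, come out with a manifestly $\ge 1$ quotient (the $q$-power has nonnegative exponent $(d-a-2)\epsilon$, respectively vanishes), needing only $a\le d-2$ and $\epsilon\ge 0$.

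The main obstacle is the two cross comparisons $B_2 \le |\lambda_1^a|$ and $B_1 \le |\lambda_d^a|$, where the attached power of $q$ can run the wrong way depending on the sign of $\epsilon-1$. Here I would absorb it into the Gaussian ratio using the crude bounds $(q^{d-1}-1)/(q^{d-1-a}-1) \ge q^a$ and $(q^{d-1}-1)/(q^a-1) \ge q^{d-1-a}$, each of which reduces to $q^{\,x}\ge 1$. This collapses $|\lambda_1^a|/B_2$ to at least $q^{(d-1-a)\epsilon}\ge 1$, and $|\lambda_d^a|/B_1$ to at least $q^{\,d-1-\epsilon}$; the latter is precisely where the hypothesis $d\ge 3$ enters, since with $\epsilon\le 2$ it forces $d-1-\epsilon\ge 0$. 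I expect this exponent bookkeeping, and in particular checking $d-1-\epsilon\ge 0$ against the admissible values of $\epsilon$ for all six polar-space types, to be the only delicate point; the rest is routine.
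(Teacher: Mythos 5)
Your proof is correct. Part (a) is the same computation as the paper's: factor out $\gauss{d-1}{a}$ and read off the sign of the exponent $(d-a-1)(\epsilon-1)$. For part (b) you take a genuinely different route. The paper merges the two terms $B_1,B_2$ of $\lambda_{d-1}^a$ into a single multiple of $\gauss{d-1}{a}$ via the recursion \eqref{gauss_rec}, bounds the resulting factor $\bigl|(q^\epsilon+1)\tfrac{q^{d-1-a}-1}{q^{d-1}-1}-1\bigr|$ by a power of $q$, and splits into the cases $a<d-2$ and $a=d-2$; you instead use $|B_2-B_1|\le\max(B_1,B_2)$ and verify the four ratio comparisons $B_i\le|\lambda_1^a|,|\lambda_d^a|$ directly, absorbing the adverse $q$-powers into the elementary bounds $\tfrac{q^{d-1}-1}{q^{d-1-a}-1}\ge q^a$ and $\tfrac{q^{d-1}-1}{q^a-1}\ge q^{d-1-a}$. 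All four comparisons check out (the exponents $(d-a-2)\epsilon$, $0$, $(d-a-1)\epsilon$ and $d-1-\epsilon$ are indeed nonnegative under $a\le d-2$, $0\le\epsilon\le 2$ and $d\ge 3$), and your argument needs no case split on $a$. It is in fact slightly more robust than the paper's: the paper's final inequality $q^{\binom{d-a-1}{2}+1}\gauss{d-1}{a}\le|\lambda_1^a|$ in the case $a=d-2$ is not literally valid when $\epsilon<1$, whereas your comparison $B_2\le|\lambda_1^a|$ handles that configuration cleanly. The only price you pay is having to track four ratios instead of one, but each is a one-line verification.
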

\begin{proof}
  By Corollary \ref{kor_formula_spec},
  \begin{align*}
    |\lambda_1^a| - |\lambda_d^a| &=  \gauss{d-1}{a} q^{\binom{d-a-1}{2} + (d-a-1)\epsilon} - \gauss{d-1}{a} q^{\binom{d-a}{2}}\\
    &= \gauss{d-1}{a} q^{\binom{d-a}{2}} \left( q^{(d-a-1)(\epsilon-1)} - 1 \right).
  \end{align*}
  As $a \leq d-2$, the statement in \eqref{prop_simple_comp_a} follows.
  For \eqref{prop_simple_comp_b} we calculate
  \begin{align*}
    |\lambda_{d-1}^a| &= \left| \gauss{d-2}{a} q^{\binom{d-a-1}{2}+\epsilon} - \gauss{d-2}{a-1} q^{\binom{d-a}{2}} \right|\\
    &\stackrel{\eqref{gauss_rec}}{=} \left| q^{\binom{d-a-1}{2}+\epsilon} \gauss{d-2}{a} + q^{\binom{d-a-1}{2}} \left( \gauss{d-2}{a} - \gauss{d-1}{a} \right) \right|\\
    &= q^{\binom{d-a-1}{2}} \left| (q^{\epsilon} + 1) \gauss{d-2}{a} - \gauss{d-1}{a} \right|\\
    &= q^{\binom{d-a-1}{2}} \gauss{d-1}{a} \left| (q^{\epsilon} + 1) \frac{q^{d-1-a}-1}{q^{d-1} -1} -1 \right|.\\
    \intertext{If $a < d-2$, then by Corollary \ref{kor_formula_spec}}
    |\lambda_{d-1}^a| &\leq q^{\binom{d-a-1}{2}+e} \gauss{d-1}{a} \leq |\lambda_1^a|, |\lambda_d^a|.
    \intertext{If $a = d-2$, then by $d \geq 3$ and Corollary \ref{kor_formula_spec}}
    |\lambda_{d-1}^a| &\leq q^{\binom{d-a-1}{2}} \gauss{d-1}{a} \left|(q^{2} + 1) \frac{q-1}{q^{d-1} -1} -1\right|\\
    &\leq q^{\binom{d-a-1}{2}+1} \gauss{d-1}{a} \leq |\lambda_1^a|, |\lambda_d^a|.
  \end{align*}
  This shows \eqref{prop_simple_comp_b}.
\end{proof}

\begin{figure}[ht]
 \centering
\begin{tikzpicture}[scale=0.9]
  \draw[->, ultra thick] (0,0) -- (9,0) node[right] {$s$};
  \draw[->, ultra thick] (0,0) -- (0,3) node[above] {$A(r, s, a)$};
  \draw[scale=1,domain=0:8,smooth,variable=\x,black,ultra thick] plot ({\x},{(-\x*\x + 8*\x)/5});
  \node at (3.9,3.7) {$A(r, \frac{a-\epsilon}{2},a)$};
  \draw[fill=circle1,thick] (4,3.2) circle [radius=0.14];
\end{tikzpicture}
 \caption{The function $A(r, s, a)$ imagined as a continuous unimodal function in $s$.}
 \label{fig:Arsa_unimodal}
\end{figure}
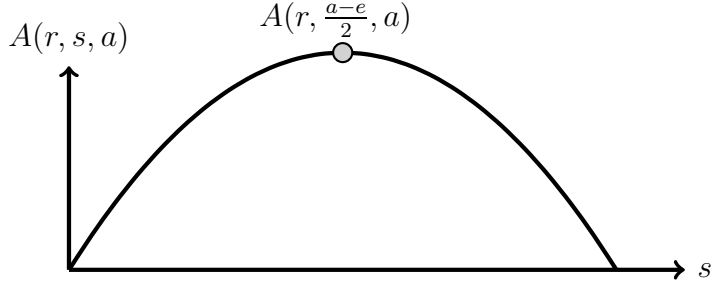

\begin{lemma}\label{lem_monotony_ev}
  For fixed $r > 0$, $q \geq 3$, and $a \in \{ 0, \ldots, d-1\}$ the sequence
  \begin{align*}
    (A(r, s, a))_{\max(a-r+1, 0) \leq s \leq \min(a, d-r)}
  \end{align*}
  is unimodal.
  More precisely, we have
  \begin{enumerate}[(a)]
   \item If $2s + \epsilon - a \geq \frac{1}{2}$, then $A(r, s, a) > A(r, s+1, a)$.
   \item If $2s + \epsilon - a \leq -\frac{1}{2}$, then $A(r, s, a) < A(r, s+1, a)$.
  \end{enumerate}
\end{lemma}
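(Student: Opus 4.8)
The plan is to study the ratio
\[
  \rho(s):=\frac{A(r,s+1,a)}{A(r,s,a)}
\]
and to show that whether it exceeds $1$ is governed entirely by the sign of $2s+\epsilon-a$. Parts (a) and (b) are then exactly the two cases, and unimodality follows at once: as $2s+\epsilon-a$ increases with $s$, the consecutive ratios $\rho(s)$ are first greater than $1$ (by (b)) and then less than $1$ (by (a)), so the sequence rises and then falls.

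The inequalities in (a) and (b) compare consecutive terms of the sequence, so I may assume that both $s$ and $s+1$ lie in the index range $[\max(a-r+1,0),\min(a,d-r)]$; equivalently, the four exponents $d-r-s$, $a-s$, $s+1$ and $r-a+s$ are all at least $1$. Using $\gauss{n}{k+1}/\gauss{n}{k}=(q^{n-k}-1)/(q^{k+1}-1)$ together with the identities $\binom{m-1}{2}-\binom{m}{2}=-(m-1)$ and $\binom{m+1}{2}-\binom{m}{2}=m$, a direct computation gives
\[
  \rho(s)=\frac{(q^{d-r-s}-1)(q^{a-s}-1)}{(q^{s+1}-1)(q^{r-a+s}-1)}\;q^{-d+2r+2s-a+1-\epsilon}.
\]
Writing $q^{m}-1=q^{m}(1-q^{-m})$ and collecting the powers of $q$ turns this into $\rho(s)=q^{a-\epsilon-2s}\,C$, where
\[
  C=\frac{(1-q^{-(d-r-s)})(1-q^{-(a-s)})}{(1-q^{-(s+1)})(1-q^{-(r-a+s)})}.
\]
Since each factor $1-q^{-m}$ with $m\geq1$ lies in $[1-q^{-1},1)$, I obtain the two-sided bound $(1-q^{-1})^{2}<C<(1-q^{-1})^{-2}$.

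From these bounds, case (a) gives $\rho(s)<q^{-(2s+\epsilon-a)}(1-q^{-1})^{-2}$ and case (b) gives $\rho(s)>q^{-(2s+\epsilon-a)}(1-q^{-1})^{2}$, so in both cases the desired comparison of $\rho(s)$ with $1$ reduces to the single inequality $q^{\delta}(1-q^{-1})^{2}\geq1$, where $\delta\geq0$ denotes the smallest admissible value of $|2s+\epsilon-a|$. Pinning down $\delta$ is the delicate point, and the part I expect to be the main obstacle, since the crude estimate is tight: for $q=3$ it would fail at $\delta=\frac12$. The resolution is arithmetic. As $2s-a\in\bbZ$, for the integer types $\epsilon\in\{0,1,2\}$ the number $2s+\epsilon-a$ is an integer, so the hypotheses force $\delta\geq1$; here $q(1-q^{-1})^{2}=(q-1)^{2}/q\geq\frac43>1$ for all $q\geq3$. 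The half-integer types $\epsilon\in\{\frac12,\frac32\}$ occur only for the two Hermitian spaces, where $q$ is a perfect square and hence $q\geq4$; there $\delta\geq\frac12$ and $q^{1/2}(1-q^{-1})^{2}=(q-1)^{2}/q^{3/2}\geq\frac98>1$ for all $q\geq4$. Both expressions are increasing in $q$, so the extremal cases $q=3$ and $q=4$ suffice, which establishes (a) and (b).
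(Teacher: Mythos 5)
Your proof is correct, and it follows the same basic strategy as the paper---compare consecutive terms and reduce to a power of $q$ times bounded correction factors---but the two arguments diverge at the decisive estimate. The paper works with the difference $A(r,s,a)-A(r,s+1,a)$ and factors it as $q^y\gauss{r-1}{a-s-1}\gauss{d-r-1}{s}B$ with
\[
B = q^{2s+\epsilon-a}\,\frac{q^{r-a+s}-1}{q^{r-a+s}-q^{2s-2a+r}}-\frac{q^{d-r-s}-1}{q^{d-r-s}-q^{d-2s-r-1}},
\]
pairing each $q^n-1$ with a denominator of the form $q^n-q^k$ so that every such quotient lies in $[1,q/(q-1)]$; this yields $q^{2s+\epsilon-a}-\tfrac{q}{q-1}\le B\le q^{2s+\epsilon-a}\tfrac{q}{q-1}-1$, and a \emph{single} factor $q/(q-1)\le\tfrac32$ is weak enough that $|2s+\epsilon-a|\ge\tfrac12$ suffices uniformly for all $q\ge3$ (e.g.\ $\sqrt3-\tfrac32>0$). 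Your two-sided bound $(1-q^{-1})^{2}<C<(1-q^{-1})^{-2}$ costs \emph{two} such factors, which, as you correctly diagnose, is too lossy precisely at $q=3$ with $\delta=\tfrac12$; you recover the result by the arithmetic observation that $2s-a$ is an integer, so $\delta=\tfrac12$ can only occur for the Hermitian types $\epsilon\in\{\tfrac12,\tfrac32\}$, where $q$ is a square and hence $q\ge4$. That rescue is legitimate (the paper's Section 2 records that Hermitian spaces have $q=r^2$), your verification of the index constraints and of the monotonicity of $(q-1)^2/q$ and $(q-1)^2/q^{3/2}$ is sound, and the unimodality conclusion from (a) and (b) is handled the same way in both arguments. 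The trade-off: the paper's sharper grouping gives a clean, case-free inequality valid for all types at once, while your version needs the extra geometric input tying the half-integer $\epsilon$ to square $q$, but in exchange it isolates the exact leading power $q^{a-\epsilon-2s}$ in the ratio, which makes the mechanism of the unimodality slightly more transparent.
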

\begin{proof}
  We investigate the sign of $x := A(r, s, a) - A(r, s+1, a)$ for integers $s$ with $\max(a-r+1, 0) \leq s \leq \min(a, d-r)-1$.
  \begin{align*}
    x &= \gauss{d-r}{s} \gauss{r-1}{a-s} q^{\binom{d-r-s}{2} + \binom{r-a+s}{2} + (d-r-s)\epsilon} \\
    &- \gauss{d-r}{s+1} \gauss{r-1}{a-s-1} q^{\binom{d-r-s-1}{2} + \binom{r-a+s+1}{2} + (d-r-s-1)\epsilon}.
  \end{align*}
Using
\[
\gauss{r-1}{a-s}=\gauss{r-1}{a-s-1}\cdot\frac{q^{r-a+s}-1}{q^{a-s}-1}
\ \mbox{and}\
\gauss{d-r}{s+1}=\gauss{d-r-1}{s}\cdot\frac{q^{d-r}-1}{q^{s+1}-1}
\]
we find  that $x= q^y\gauss{r-1}{a-s-1}\gauss{d-r-1}{s} B$ for some integer $y$ and
  \begin{align*}
    B := q^{2s+e-a} \frac{q^{r-a+s}-1}{q^{r-a+s}-q^{2s-2a+r}}-
    \frac{q^{d-r-s}-1}{q^{d-r-s}-q^{d-2s-r-1}}.
  \end{align*}
  As $1\le q^n/(q^n-q^k)\le q/(q-1)$ for integers $0\le k\le n-1$, this implies that
  \[
  q^{2s+e-a}-\frac{q}{q-1}\le B\le q^{2s+e-a}\frac{q}{q-1}-1.
  \]
  Using $q\ge 3$, we find that $B>0$ if $2s+e-a\ge \frac12$, and $B<0$, if $2s+e-a\le -\frac12$.
\end{proof}

\begin{kor}\label{kor_ev_upper_bnd}
  For $q \geq 3$, $a < d$, and $r > 0$ we have
   \begin{align*}
    |\lambda_r^a | \leq \max\{A(r, s, a) : s\in\bbZ,\ \max(a-r+1, 0) \leq s \leq \min(a, d-r)\}.
  \end{align*}
\end{kor}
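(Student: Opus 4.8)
The plan is to start from the closed form for $\lambda_r^a$ provided by Theorem \ref{thm_formula_ev}. For $r>0$ it gives $\lambda_r^a = (-1)^{r+a}\sum_s (-1)^s A(r,s,a)$, where $s$ runs over the integers with $\max(a-r+1,0)\le s\le \min(a,d-r)$. Taking absolute values, the sign prefactor $(-1)^{r+a}$ disappears and the task reduces to bounding $\bigl|\sum_s (-1)^s A(r,s,a)\bigr|$ by the largest term of the sequence $(A(r,s,a))_s$. Since each $A(r,s,a)$ is a product of Gaussian coefficients and a power of $q$, all terms are nonnegative, so the statement is purely one about alternating sums of a nonnegative sequence.

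The decisive input is the unimodality established in Lemma \ref{lem_monotony_ev}: there is an index $k$ (near $s=(a-\epsilon)/2$) such that $A(r,s,a)$ is nondecreasing for $s\le k$ and nonincreasing for $s\ge k$. I would then invoke the elementary fact that the absolute value of an alternating sum of a nonnegative unimodal sequence never exceeds its peak value. Concretely, after factoring out $(-1)^k$ so that the term at $s=k$ becomes $+A(r,k,a)$, one splits the alternating sum at the peak into a left block over the indices $s\le k$ and a right block over $s\ge k+1$.

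For the left block the sequence is nondecreasing and its top term $A(r,k,a)$ carries a plus sign; grouping consecutive terms as $(A(r,k,a)-A(r,k-1,a))+(A(r,k-2,a)-A(r,k-3,a))+\cdots$ shows the block is nonnegative, while the complementary grouping $A(r,k,a)-(A(r,k-1,a)-A(r,k-2,a))-\cdots$ shows it is at most $A(r,k,a)$; hence the left block lies in $[0,A(r,k,a)]$. For the right block the sequence is nonincreasing and its first term $A(r,k+1,a)$ carries a minus sign; the same two groupings show the block lies in $[-A(r,k+1,a),0]$. Adding the two estimates and using $A(r,k+1,a)\le A(r,k,a)$ (because $k$ is the peak) yields $(-1)^k\sum_s (-1)^s A(r,s,a)\in[-A(r,k,a),A(r,k,a)]$, that is $|\lambda_r^a|\le A(r,k,a)=\max_s A(r,s,a)$, as claimed.

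I expect the only real care to be in the bookkeeping of this grouping argument: ensuring the peak term is counted exactly once (it belongs to the left block), treating the degenerate cases where $k$ coincides with an endpoint so that one block is empty, and confirming that the unimodality of Lemma \ref{lem_monotony_ev} indeed guarantees a single peak even in the boundary situation $2s+\epsilon-a=0$ (possible only for $\epsilon=0$), where the lemma asserts no strict inequality. None of these involve any computation; they amount only to stating and applying the alternating-sum inequality for nonnegative unimodal sequences.
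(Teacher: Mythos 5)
Your proposal is correct and takes essentially the same route as the paper: the paper's proof of this corollary is a one-line appeal to the eigenvalue formula and to the unimodality of Lemma \ref{lem_monotony_ev}, and your grouping argument for alternating sums of a nonnegative unimodal sequence is exactly the standard fact it leaves implicit. The only (harmless) slip is the parenthetical claim that the boundary case $2s+\epsilon-a=0$ occurs only for $\epsilon=0$; it can occur for any integer $\epsilon\in\{0,1,2\}$, but your treatment of it applies verbatim in all these cases.
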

\begin{proof}
  This is a direct consequence of the formula for $\lambda_r^a$ given in Theorem \ref{thm_ev_gen} and the unimodality of $A(r, s, a)$ given in Lemma \ref{lem_monotony_ev}.
\end{proof}

 \begin{proof}[Proof of \ref{thm_smallest_ev}]
The theorem was proven in Lemma \ref{lem_thm_smallest_ev_a_eq_dm1} for $a=d-1$,
and in \cite[p. 1295]{MR2755082} for $a=0$. Hence, we assume now that $1 \leq a\le d-2$. In view of Proposition  \ref{prop_simple_comp} and Corollary \ref{kor_formula_spec}, it suffices to show that $|\lambda_1^a|\ge |\lambda_r^a|$ for $2\le r\le d-2$. Using $\lambda_1^a=-A(1,a,a)$, Corollary \ref{kor_ev_upper_bnd} shows that it is sufficient to show that $A(1,a,a)\ge A(r,s,a)$ for all integers $s$ satisfying $\max(a-r+1, 0) \leq s \leq \min(a, d-r) \}$. Define
  \begin{align*}
     f(r,s)&:=\binom{d-r-s}{2} + (d-r-s)\epsilon + \binom{r+s-a}{2}
     \\
     &+ (d-r-s)s+(r+s-1-a)(a-s).
  \end{align*}
  for all integers $r,s$ with $a+1\le r+s\le d$. Using the definition of $A(r,s,a)$ in Lemma \ref{thm_formula_ev}, Lemma \ref{lem_lowest_gauss} and Lemma \ref{lem_upper_bnd_gauss} give
  \begin{align*}
   A(1,a,a) &\geq \frac{4}{3}q^{f(1,a)},
   \\
   A(r,s,a) &\leq 4q^{f(r,s)}.
  \end{align*}
  As $q\geq 3$,
  it suffices therefore to show that $f(1,a)\ge f(r,s)+1$ for all $r,s$ with
  $2\leq r\leq d-2$ and $a+1\leq r+s\leq d$ and $0\leq s\leq a$. Consider such a pair $(r,s)$.
  An easy calculation gives \begin{align*}
  f(1,a)-f(r,s)&=(d-a-1)(r-1)-(r+s-a-1)(r-e-s)
  \\
  &\geq (d-a-1)(r-1)-(r+s-a-1)(r-s).
  \end{align*}
  Denote the right hand side by $g(r,s)$. If $s\geq 2$, then $s \leq a$, $s+r\leq d$, $r \geq 2$ and $a \leq d-1$ show that
    \begin{align*}
  g(r,s)  &\geq (d-a-1)(r-1)-(r+s-a-1)(r-2)
  \\&\geq (d-a-1)(r-1)-(d-a-1)(r-2)\\
  &\geq d-a-1 \geq 1.
  \end{align*}
  If $s\in\{0,1\}$, then
  \begin{align*}
    g(r,s)  &= (r-1)(d-r-1)+(1-s)a\geq r-1 \geq 1.
  \end{align*}
  since $r\leq d-2$ and $a\ge 0$.
\end{proof}

We have calculated the smallest eigenvalues and, therefore, Hoffman's bound can be applied. In order to simplify the approximations of the Hoffman's bound in Section \ref{sec_felina}, we provide a simpler formula for the smallest eigenvalue.

\begin{satz}\label{kor_c_d_dmt}\label{thm_hoffman_explicit}
Suppose that $q\ge 3$ and define $\alpha$ by $\alpha \log(1+q^{-\epsilon-1}) = \log(1+ q^{-\epsilon})$. Set $\gamma = 2$ if $q = 3$, and $\gamma = 1 +2q^{-1}$ otherwise.
Let $0 < t < d$.
  \begin{enumerate}[(a)]
   \item We have
   \begin{align*}
    c_{d,t} \leq -\lambda_{\min} (1 + q^{-\epsilon})^{\frac{\alpha}{\alpha-1}},
  \end{align*}
  where $\lambda_{\min} := \min_r \lambda_r^{d-t-1}$ is the smallest eigenvalue of the matrix $\sum_{s=t+1}^d A_{s}$
   \item If $t$ odd or $\epsilon \geq 1$, then
   \begin{align*}
    c_{d,t} \leq \gamma (1 + q^{-\epsilon})^{\frac{\alpha}{\alpha-1}}q^{t(d-t-1) + \binom{t}{2} + t\epsilon}.
  \end{align*}
   \item If $t$ even and $\epsilon \leq 1$, then
   \begin{align*}
    c_{d,t} \leq \gamma (1 + q^{-\epsilon})^{\frac{\alpha}{\alpha-1}}q^{t(d-t-1) + \binom{t+1}{2}}.
  \end{align*}
  \end{enumerate}
\end{satz}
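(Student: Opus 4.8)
The plan is to derive (a) directly from Hoffman's bound (Proposition \ref{hoffman_bound}) together with the estimates of Section \ref{sec_ests}, and then to obtain (b) and (c) by feeding the explicit value of $\lambda_{\min}$ supplied by Theorem \ref{thm_smallest_ev} and Corollary \ref{kor_formula_spec} into (a). The whole argument is short; the only care needed is in part (a).

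For (a), I first note that the graph associated to $\sum_{s=t+1}^d A_s$ has positive valency $k = \lambda_0^{d-t-1}$ (since $t < d$ there are generators at codimension $\geq t+1$), so it has edges and hence $\lambda_{\min} < 0$. Writing Hoffman's bound with $\lambda_{\min} < 0 < k$ gives
\begin{align*}
c_{d,t} \leq \frac{n\lambda_{\min}}{\lambda_{\min}-k} = \frac{n(-\lambda_{\min})}{k-\lambda_{\min}} \leq \frac{n}{k}(-\lambda_{\min}),
\end{align*}
where the last step uses $k-\lambda_{\min} \geq k$. It therefore remains to show $n/k \leq (1+q^{-\epsilon})^{\frac{\alpha}{\alpha-1}}$. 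Here $n = \prod_{i=0}^{d-1}(q^{i+\epsilon}+1)$, and by Corollary \ref{kor_dis_gens} (equivalently, from Theorem \ref{thm_formula_ev}) the valency is $k = \sum_{s=t+1}^d \gauss{d}{s} q^{\binom{s}{2}+s\epsilon}$, so $k \geq q^{\binom{d}{2}+d\epsilon}$ from its $s=d$ term alone. Since $\alpha$ is defined by $\alpha\log(1+q^{-\epsilon-1}) = \log(1+q^{-\epsilon})$, the hypothesis of Lemma \ref{lem_est_gens}(b) holds with equality, whence $n \leq q^{d\epsilon+\binom{d}{2}}(1+q^{-\epsilon})^{\frac{\alpha}{\alpha-1}}$; dividing by the lower bound for $k$ collapses the ratio to $(1+q^{-\epsilon})^{\frac{\alpha}{\alpha-1}}$ and finishes (a).

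For (b) and (c), I apply (a) with $\lambda_{\min}$ made explicit. Setting $a = d-t-1$, so that $d-a = t+1$, Theorem \ref{thm_smallest_ev} identifies the minimum: its part (c) gives $\lambda_{\min} = \lambda_1^{d-t-1}$ when $t$ is odd or $\epsilon \geq 1$ (then $d-a$ is even or $\epsilon \geq 1$), and its part (d) gives $\lambda_{\min} = \lambda_d^{d-t-1}$ when $t$ is even and $\epsilon \leq 1$ (then $d-a$ is odd and $\epsilon \leq 1$); in both regimes $\lambda_0^{d-t-1} = k > 0$ is irrelevant to the minimum. Evaluating with Corollary \ref{kor_formula_spec} and the duality $\gauss{d-1}{d-t-1} = \gauss{d-1}{t}$ yields $-\lambda_{\min} = \gauss{d-1}{t} q^{\binom{t}{2}+t\epsilon}$ in the first case and $-\lambda_{\min} = \gauss{d-1}{t} q^{\binom{t+1}{2}}$ in the second, the sign $(-1)^{d-a} = (-1)^{t+1} = -1$ coming out correctly since $t$ is even. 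Bounding the Gaussian coefficient via Lemma \ref{lem_upper_bnd_gauss}, namely $\gauss{d-1}{t} \leq \gamma q^{t(d-t-1)}$ with $\gamma = 2$ for $q=3$ and $\gamma = 1+2q^{-1}$ for $q \geq 4$, and substituting into (a) produces exactly the stated bounds.

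The main obstacle, such as it is, lies entirely in (a): one must track the signs in Hoffman's bound, and, crucially, bound $k$ from below by its single dominant term while bounding $n$ from above by the sharp product estimate of Lemma \ref{lem_est_gens}, so that the quotient telescopes to $(1+q^{-\epsilon})^{\frac{\alpha}{\alpha-1}}$. The parity and $\epsilon$ case distinctions in (b) and (c) serve only to select the correct branch of Theorem \ref{thm_smallest_ev} in an otherwise uniform computation.
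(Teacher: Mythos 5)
Your proposal is correct and follows essentially the same route as the paper: Hoffman's bound with $k-\lambda_{\min}\ge k$, the lower bound $k\ge q^{\binom{d}{2}+d\epsilon}$ from the dominant term of $\lambda_0^{d-t-1}$, the upper bound on $n$ from Lemma \ref{lem_est_gens}(b), and then Theorem \ref{thm_smallest_ev} with Corollary \ref{kor_formula_spec} and Lemma \ref{lem_upper_bnd_gauss} for parts (b) and (c). Your sign-tracking and the explicit use of the duality $\gauss{d-1}{d-t-1}=\gauss{d-1}{t}$ are slightly more careful than the paper's write-up but not substantively different.
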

\begin{proof}
  Here we have $a = d-t-1$. An application of Hoffman's Bound, see Proposition \ref{hoffman_bound}, using $\lambda_{\min}\le 0$ shows that  $c_{d,t} \leq -n \lambda_{\min}/k$. Theorem \ref{thm_formula_ev} shows that
  \begin{align*}
    k = \lambda_0^{d-t-1} \geq q^{\binom{d}{2} + d \epsilon}.
  \end{align*}
  Lemma \ref{lem_est_gens} shows that
  \begin{align*}
    n\le q^{\binom{d}{2} + d \epsilon}\cdot (1 + q^{-\epsilon})^{\frac{\alpha}{\alpha-1}}.
  \end{align*}
  Using these estimations, we find the bound for $c_{d,t}$ given in (a). From Theorem \ref{thm_smallest_ev} and Lemma \ref{kor_formula_spec}. Using Lemma \ref{lem_upper_bnd_gauss}, we find
  \begin{align*}
    -\lambda_{\min} &= \gauss{d-1}{t} q^{\binom{t}{2} + t\epsilon}
    \leq \gamma q^{t(d-t-1) + \binom{t}{2} + t\epsilon}
  \end{align*}
  if $t$ odd or $\epsilon \geq 1$, and
  \begin{align*}
    -\lambda_{\min} &= (-1)^{t} \gauss{d-1}{t} q^{\binom{t+1}{2}}
    \leq \gamma q^{t(d-t-1) + \binom{t+1}{2}}
  \end{align*}
  if $t$ even and $\epsilon \leq 1$. Now (b) and (c) follow from (a).
\end{proof}

\section{Proof of the Main Theorem}\label{sec_felina}

In this section we want to specify the $q$, $d$, and $t$ for which our results in the two sections are non-trivial statements.
We shall do so by providing lower, respectively, upper bounds on all the parameters
used in Theorem \ref{thm_max_t_even} and Theorem \ref{thm_max_t_odd}.
For this we shall provide some upper estimates for $b_1^\even, b_2^\even, b_1^\odd, b_2^\odd, b_3^\odd$.
Throughout this section, $q$ is fixed and we define $\alpha$ and $\gamma$ as follows.
\begin{enumerate}[(i)]
 \item $\gamma := 2$ if $q=3$, and $\gamma := 1+2q^{-1}$
  if $q \geq 4$.
 \item $\alpha$ is chosen as in Lemma \ref{lem_est_gens}, that is $\alpha\cdot \log(1+q^{-\epsilon-1}) = \log(1+q^{-\epsilon})$.
\end{enumerate}

\begin{lemma}\label{lem_psi_bounds}
\begin{enumerate}[(a)]
   \item If $t$ is even and $5t \leq 2d+1$, then
    \begin{align*}
     \psi^\even \leq q^{\frac{3}{4} t^2 + \frac{t}{2}(d-2t) -(d-\frac{5}{2} t + 2)} \tfrac{\gamma^2}{1-q^{-2}}.
    \end{align*}
    \item If $t$ is odd and $5t \leq 2d+1$, then
    \begin{align*}
     \psi^\odd \leq q^{(\frac{t}{2} - \frac{3}{2})(\frac{3}{2}t - \frac{1}{2}) + (d-2t+1)(\frac{t}{2}-\frac{1}{2}) -(d-\frac{5}{2} t + \frac{7}{2})} \tfrac{\gamma^2}{1-q^{-2}}.
    \end{align*}
\end{enumerate}
\end{lemma}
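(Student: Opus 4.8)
The statement gives explicit upper bounds for the sums $\psi^\even$ and $\psi^\odd$, for which Lemma~\ref{lem_basic_cnt_pgnq} already supplies exact closed-form expressions as sums of products of Gaussian coefficients. So the whole task is purely computational: start from the exact formula, bound each Gaussian coefficient using the estimates of Section~\ref{sec_ests}, and sum the resulting geometric-type series. First I would treat part~(a). By Lemma~\ref{lem_basic_cnt_pgnq}~(a),
\begin{align*}
 \psi^\even = q^{\frac{3}{4}t^2}\sum_{i=1}^{t/2-1} q^{(\frac{t}{2}-1-i)(\frac{t}{2}-i)} \gauss{d-2t+1}{t/2-i}\gauss{t/2-1}{i}.
\end{align*}
The plan is to apply Lemma~\ref{lem_upper_bnd_gauss}~(a) to bound each Gaussian coefficient by $\gamma$ times the appropriate power of $q$ (using $\gamma=2$ for $q=3$ and $\gamma=1+2q^{-1}$ otherwise, which is exactly how $\gamma$ is defined in this section), so that $\gauss{d-2t+1}{t/2-i}\le \gamma\, q^{(t/2-i)(d-\frac{5}{2}t+1+i)}$ and $\gauss{t/2-1}{i}\le \gamma\, q^{i(t/2-1-i)}$.

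\textbf{Collecting the exponent.} After substituting these two bounds, each summand is at most $\gamma^2 q^{E(i)}$ where $E(i)$ is the sum of the three exponents
\begin{align*}
 E(i) = \left(\tfrac{t}{2}-1-i\right)\left(\tfrac{t}{2}-i\right) + \left(\tfrac{t}{2}-i\right)\left(d-\tfrac{5}{2}t+1+i\right) + i\left(\tfrac{t}{2}-1-i\right).
\end{align*}
The key step is to verify, by an elementary expansion, that $E(i)$ is affine (in fact decreasing by exactly $2$ as $i$ increases by $1$ in the relevant range), so that the factor $q^{\frac34 t^2}$ in front combines with $E(i)$ to give a geometric series in $q^{-2}$. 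Concretely I expect $E(i)$ to reduce to $\tfrac{t}{2}(d-2t) - (d-\tfrac{5}{2}t+2) - 2(i-1) + (\text{the }\tfrac34 t^2\text{ already factored out})$; the leading summand $i=1$ should produce the exponent $\tfrac34 t^2 + \tfrac{t}{2}(d-2t) - (d-\tfrac52 t+2)$ that appears in the claimed bound, and subsequent terms decay by $q^{-2}$ each. Summing the geometric series $\sum_{i\ge 1} q^{-2(i-1)} < \frac{1}{1-q^{-2}}$ then yields exactly the stated factor $\frac{\gamma^2}{1-q^{-2}}$ times $q^{\frac34 t^2 + \frac{t}{2}(d-2t) - (d-\frac52 t+2)}$.

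\textbf{The odd case and the main obstacle.} Part~(b) is handled in the same way: apply Lemma~\ref{lem_upper_bnd_gauss}~(a) to both Gaussian coefficients in the formula for $\psi^\odd$ from Lemma~\ref{lem_basic_cnt_pgnq}~(b), collect the exponent of the $i=1$ term, confirm it matches $(\tfrac{t}{2}-\tfrac32)(\tfrac32 t-\tfrac12) + (d-2t+1)(\tfrac{t}{2}-\tfrac12) - (d-\tfrac52 t+\tfrac72)$, and check that successive terms again decrease by a factor $q^{-2}$ before summing the geometric series. I would need the hypothesis $5t\le 2d+1$ precisely to guarantee that the upper index $d-2t+1$ (resp.\ $d-2t+2$) of the large Gaussian coefficient is at least its lower index for every $i$ in the summation range, so that Lemma~\ref{lem_upper_bnd_gauss} legitimately applies and none of the coefficients vanish unexpectedly. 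The main obstacle is purely bookkeeping: getting the three quadratic-in-$i$ exponents to cancel cleanly so that $E(i)$ is genuinely affine with slope $-2$, and correctly identifying which index ($i=1$) maximizes the summand so that the leading-term exponent is the one quoted. There is no conceptual difficulty beyond careful algebra, and I would double-check the arithmetic by evaluating the exponent at $i=1$ and $i=2$ and confirming the difference is exactly $-2$.
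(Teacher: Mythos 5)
Your overall route is exactly the paper's: start from the exact formulas of Lemma \ref{lem_basic_cnt_pgnq}, bound each Gaussian coefficient by $\gamma\,q^{k(n-k)}$ via Lemma \ref{lem_upper_bnd_gauss}, and dominate the resulting sum by a geometric series in $q^{-2}$. However, the step you single out as the key verification is based on a false expectation: the exponent
\begin{align*}
E(i) = \left(\tfrac{t}{2}-1-i\right)\left(\tfrac{t}{2}-i\right) + \left(\tfrac{t}{2}-i\right)\left(d-\tfrac{5}{2}t+1+i\right) + i\left(\tfrac{t}{2}-1-i\right)
\end{align*}
is \emph{not} affine in $i$. The quadratic terms do not cancel: expanding gives
\begin{align*}
E(i) = \tfrac{t}{2}(d-2t) - i\left(d-\tfrac{5}{2}t+1+i\right),
\end{align*}
which is concave quadratic in $i$, and the consecutive difference is $E(i)-E(i+1) = d-\tfrac{5}{2}t+2+2i$. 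Under the hypothesis $5t\le 2d+1$ this difference is at least $\tfrac{7}{2}$ for $i\ge 1$, never exactly $2$; your proposed sanity check (confirming the difference between $i=1$ and $i=2$ is exactly $-2$) would therefore fail and could derail you.

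The error is harmless for the final bound, because the terms decay \emph{faster} than your affine model predicts: one checks that $i\left(d-\tfrac{5}{2}t+1+i\right) \ge \left(d-\tfrac{5}{2}t+2\right) + 2(i-1)$, the difference of the two sides being $(i-1)\left(d-\tfrac{5}{2}t+i\right)\ge 0$, where nonnegativity is exactly where the hypothesis $5t\le 2d+1$ is used. This gives $\sum_{i\ge 1} q^{E(i)} \le q^{\frac{t}{2}(d-2t)-(d-\frac52 t+2)}\sum_{j\ge 0}q^{-2j}$ and hence the stated bound; the odd case is analogous. Note also that your stated reason for needing $5t\le 2d+1$ (so that the Gaussian coefficients have valid indices) is not the operative one — out-of-range Gaussian coefficients vanish by definition, so those terms cause no harm; the hypothesis is really needed for the geometric-series domination just described.
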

\begin{proof}
(a)  For integers $i$ with $1 \leq i \leq \frac{t}{2} - 1$, Lemma \ref{lem_upper_bnd_gauss} shows that
  \begin{align*}
    &q^{(\frac{t}{2}-1-i)(\frac{t}{2}-i)} \gauss{d-2t+1}{t/2 - i} \gauss{t/2-1}{i}\\
    &\leq  \gamma^2 q^{(\frac{t}{2}-1-i)(\frac{t}{2}-i) + (\frac{t}{2}-i)(d-\frac{5}{2}t + 1 + i) + i(\frac{t}{2}-1-i)}\\
    &=  \gamma^2 q^{\frac{t}{2}(d-2t) - i(d-\frac{5}{2} t + 1 + i)}.
  \end{align*}
  Using $5t\le 2d$ and Lemma \ref{lem_basic_cnt_pgnq}, we find that
  \begin{align*}
    \psi^\even &= q^{\frac{3}{4} t^2} \sum_{i=1}^{t/2-1} q^{(\frac{t}{2}-1-i)(\frac{t}{2}-i)} \gauss{d-2t+1}{t/2 - i} \gauss{t/2-1}{i}\\
    &\leq q^{\frac{3}{4} t^2 + \frac{t}{2}(d-2t)} \gamma^2 \sum_{i=1}^{t/2-1} q^{-i(d-\frac{5}{2} t + 1 + i)}\\
    &\leq q^{\frac{3}{4} t^2 + \frac{t}{2}(d-2t) -(d-\frac{5}{2} t + 2)} \gamma^2 \sum_{i=0}^{t/2-2} q^{-2i}\\
    &\leq q^{\frac{3}{4} t^2 + \frac{t}{2}(d-2t) -(d-\frac{5}{2} t + 2)} \tfrac{\gamma^2}{1-q^{-2}}.
  \end{align*}
(b) For integers $i$ with $1 \leq i \leq \frac{t-3}{2}$, Lemma \ref{lem_upper_bnd_gauss} shows that
  \begin{align*}
    &q^{(\frac{t}{2}-\frac{3}{2}-i)(\frac{t}{2}-\frac{1}{2}-i)}
      \gauss{d-2t+2}{(t-1)/2-i} \gauss{t/2 - \frac{3}{2}}{i}\\
    &\leq q^{(\frac{t}{2}-\frac{3}{2}-i)(\frac{t}{2}-\frac{1}{2}-i) + (\frac{t}{2}-\frac{1}{2}-i)(d-\frac{5}{2}t + \frac{5}{2} + i) + i(\frac{t}{2}-\frac{3}{2}-i)} \gamma^2 \\
    &= q^{(d-2t+1)(\frac{t}{2}-\frac{1}{2}) - i (d-\frac{5}{2} t + \frac{5}{2} + i)} \gamma^2.
  \end{align*}
  Using $5t\le 2d+1$ and Lemma \ref{lem_basic_cnt_pgnq}, we find that
  \begin{align*}
    \psi^\odd &= q^{(\frac{t}{2}-\frac{1}{2})(\frac{3}{2}t-\frac{3}{2})} \sum_{i=1}^{t/2-3/2} q^{(\frac{t}{2}-\frac{3}{2}-i)(\frac{t}{2}-\frac{1}{2}-i)}
      \gauss{d-2t+2}{(t-1)/2-i} \gauss{\frac{t}{2} - \frac{3}{2}}{i}\\
    &\leq q^{(\frac{t}{2}-\frac{1}{2})(\frac{3}{2}t-\frac{3}{2}) + (d-2t+1)(\frac{t}{2}-\frac{1}{2})} \gamma^2 \sum_{i=1}^{t/2-3/2} q^{-i (d-\frac{5}{2} t + \frac{5}{2} + i)}\\
    &\leq q^{(\frac{t}{2}-\frac{1}{2})(\frac{3}{2}t-\frac{3}{2}) + (d-2t+1)(\frac{t}{2}-\frac{1}{2}) - (d-\frac{5}{2} t + \frac{7}{2})} \gamma^2 \sum_{i=0}^{t/2-3/2} q^{-2i}\\
    &\leq q^{(\frac{t}{2}-\frac{1}{2})(\frac{3}{2}t-\frac{3}{2}) + (d-2t+1)(\frac{t}{2}-\frac{1}{2}) -(d-\frac{5}{2} t + \frac{7}{2})}  \tfrac{\gamma^2}{1-q^{-2}}.
  \end{align*}
  This shows part (b).
\end{proof}

\begin{lemma}\label{lem_appx_bnd}
Suppose that $t\ge 2$, and $5t \leq 2d$, and $q\ge 3$. Then
  \begin{align*}
    b_1^\even &\leq q^{(\frac{t}{2} - 1)(d-2t+1) + t(t-2) + \binom{t}{2} + t\epsilon} \gamma^2 (1+q^{-\epsilon})^{\frac{\alpha}{\alpha-1}} \text{ if } \epsilon \geq 1,
    \\
    b_1^\even &\leq q^{(\frac{t}{2} - 1)(d-2t+1) + t(t-2) + \binom{t+1}{2}} \gamma^2 (1+q^{-\epsilon})^{\frac{\alpha}{\alpha-1}} \text{ if } \epsilon \leq 1,
    \\
    b_1^\odd &\leq q^{\frac{t-3}{2} (d-2t+2) + (t-3)t + \binom{t}{2} + t\epsilon} \gamma^2 (1+q^{-\epsilon})^{\frac{\alpha}{\alpha-1}},
    \\
    b_2^\even &\leq q^{\epsilon \frac{t}{2} + \binom{t/2}{2} + \frac{2t(d+5) - t^2 - 4d - 8}{4}} \tfrac{\gamma^2}{1-q^{-2}},
    \\
    b_2^\odd &\leq q^{\epsilon \frac{t+1}{2} + \binom{(t+1)/2}{2} + \frac{2t(d+5) - t^2 - 6d - 13}{4}} \tfrac{\gamma^2}{1-q^{-2}} (1+q^{-\epsilon})^{\frac{\alpha}{\alpha-1}},
    \\
    b_3^\odd &\leq q^{\epsilon \frac{t-1}{2} + \binom{(t-1)/2}{2} + \frac{2t(d+1) - t^2 - 2d - 1}{4}} \gamma.\\
  \end{align*}
\end{lemma}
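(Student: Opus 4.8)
The plan is to estimate the six quantities one at a time. By their definitions each is a product of an elementary factor -- a pure power of $q$, a Gaussian coefficient, a value $\omega(d,\cdot)$, or a clique number $c_{\cdot,\cdot}$ -- with one of the quantities $\psi^\even$, $\psi^\odd$, $\overline{\psi}^\odd$, $c_{2t-1,t}$, or $c_{2t-2,t}$ that has already been estimated. First I would bound each factor by the appropriate earlier result, then multiply and collect the exponents of $q$ into the asserted closed form. The hypotheses $t\ge 2$ and $5t\le 2d$ are used throughout: they force every Gaussian coefficient $\gauss{n}{k}$ that appears to satisfy $n\ge k\ge 0$, so that Lemma \ref{lem_upper_bnd_gauss} applies, and they give $t<2t-1$ and, for odd $t$, $t<2t-2$, so that the explicit Hoffman bound of Theorem \ref{thm_hoffman_explicit} may be applied to $c_{2t-1,t}$ and $c_{2t-2,t}$.

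For $b_1^\even=\gauss{d-\frac32 t}{t/2-1}\,c_{2t-1,t}$ I would bound the Gaussian coefficient by $\gamma\,q^{(t/2-1)(d-2t+1)}$ using Lemma \ref{lem_upper_bnd_gauss}, and bound $c_{2t-1,t}$ by Theorem \ref{thm_hoffman_explicit} applied with rank $2t-1$, which yields $t(d-t-1)=t(t-2)$ in the exponent. Since $t$ is even, parts (b) and (c) of that theorem treat $\epsilon\ge 1$ and $\epsilon\le 1$ separately and produce exactly the two stated exponents $\binom{t}{2}+t\epsilon$ and $\binom{t+1}{2}$; each of the two factors contributes one $\gamma$ and one $(1+q^{-\epsilon})^{\alpha/(\alpha-1)}$, giving the $\gamma^2$ and $(1+q^{-\epsilon})^{\alpha/(\alpha-1)}$ in the claim. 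I would treat $b_1^\odd=\gauss{d-\frac32 t+\frac12}{(t-3)/2}\,c_{2t-2,t}$ identically, now bounding the Gaussian coefficient by $\gamma\,q^{\frac{t-3}{2}(d-2t+2)}$ and using that for odd $t$ part (b) of Theorem \ref{thm_hoffman_explicit} applies for every $\epsilon$, with $t(d-t-1)=t(t-3)$.

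For $b_2^\even=q^{\epsilon t/2+\binom{t/2}{2}}\psi^\even$ I would simply insert the estimate of Lemma \ref{lem_psi_bounds}(a); since the prefactor is already a power of $q$, adding exponents and checking $\frac34 t^2+\frac t2(d-2t)-(d-\frac52 t+2)=\frac{2t(d+5)-t^2-4d-8}{4}$ gives the claim. For $b_2^\odd=\omega(d,(t+1)/2)\psi^\odd$ I would bound $\psi^\odd$ by Lemma \ref{lem_psi_bounds}(b), taking its leading exponent to be $(\frac t2-\frac12)(\frac32 t-\frac32)$ as produced in the proof of that lemma, and estimate the prefactor by writing $\omega(d,(t+1)/2)=\prod_{i=0}^{(t-1)/2}(q^{i+\epsilon}+1)=q^{\binom{(t+1)/2}{2}+\frac{t+1}{2}\epsilon}\prod_{i=0}^{(t-1)/2}(1+q^{-i-\epsilon})$ and bounding the last product by $(1+q^{-\epsilon})^{\alpha/(\alpha-1)}$ via the second inequality of Lemma \ref{lem_est_gens}(b); the factors $\tfrac{\gamma^2}{1-q^{-2}}$ and $(1+q^{-\epsilon})^{\alpha/(\alpha-1)}$ then come from $\psi^\odd$ and from $\omega$, respectively. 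Finally, for $b_3^\odd=q^{\frac{t-1}{2}\epsilon+\binom{(t-1)/2}{2}}\overline{\psi}^\odd$ I would substitute the exact value $\overline{\psi}^\odd=q^{(\frac32 t-\frac12)(\frac t2-\frac12)}\gauss{d-\frac32 t+\frac12}{(t-1)/2}$ of Lemma \ref{lem_basic_cnt_pgnq}(c) and bound the single Gaussian coefficient by $\gamma\,q^{\frac{t-1}{2}(d-2t+1)}$; since only one Gaussian estimate enters, a single $\gamma$ and no $(1+q^{-\epsilon})$ factor appear.

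The routine but error-prone part, and the only genuine obstacle, is the exponent bookkeeping: in each case the sum of several half-integer exponents must be verified to collapse to the stated form, for instance $(\frac t2-\frac12)(\frac32 t-\frac32)+(d-2t+1)(\frac t2-\frac12)-(d-\frac52 t+\frac72)=\frac{2t(d+5)-t^2-6d-13}{4}$ for $b_2^\odd$ and $(\frac32 t-\frac12)(\frac t2-\frac12)+\frac{t-1}{2}(d-2t+1)=\frac{2t(d+1)-t^2-2d-1}{4}$ for $b_3^\odd$. One must also consistently pair the correct case of Theorem \ref{thm_hoffman_explicit} with the parity of $t$ and the sign of $\epsilon-1$, and be mindful that it is the exponent of $\psi^\odd$ as derived in the proof of Lemma \ref{lem_psi_bounds}, rather than any weaker form, that makes the $b_2^\odd$ exponent come out correctly. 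No geometric input beyond the already-established estimates is required.
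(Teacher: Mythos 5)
Your proposal is correct and follows exactly the paper's (very terse) proof: bound each Gaussian coefficient via Lemma \ref{lem_upper_bnd_gauss}, bound $c_{2t-1,t}$ and $c_{2t-2,t}$ via Theorem \ref{thm_hoffman_explicit}, insert Lemma \ref{lem_psi_bounds} for $\psi^\even,\psi^\odd$ and Lemma \ref{lem_est_gens} for $\omega(d,(t+1)/2)$, and collect exponents. Your side remark about using the exponent of $\psi^\odd$ as actually derived in the proof of Lemma \ref{lem_psi_bounds}(b) (whose stated exponent differs from the derived one by $q^{-t}$) is a correct and worthwhile observation, and your exponent computations for $b_2^\odd$ and $b_3^\odd$ check out.
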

\begin{proof}
We recall the following definitions.
  \begin{align*}
    b_1^\even &= \gauss{d - \frac{3}{2} t}{t/2-1} c_{ 2t-1, t}\\
    b_1^\odd &= \gauss{d-\frac{3}{2}t+\frac{1}{2}}{(t-3)/2} c_{2t-2, t}\\
    b_2^\even &= \psi^\even q^{\epsilon \frac{t}{2} + \binom{t/2}{2}}\\
    b_2^\odd &= \psi^\odd \omega(d, t/2+\frac{1}{2}) = \psi^\odd \prod_{i=0}^{t/2-\frac{1}{2}} (q^{i+\epsilon} + 1) \\
    b_3^\odd &= \overline{\psi}^\odd q^{\epsilon \frac{t-1}{2} + \binom{(t-1)/2}{2}} =  q^{(\frac{3}{2}t - \frac{1}{2})(\frac{t}{2} - \frac{1}{2})} \gauss{d - \frac{3}{2}t + \frac{1}{2}}{(t-1)/2} q^{\epsilon \frac{t-1}{2} + \binom{(t-1)/2}{2}}
  \end{align*}
 Recall also that the numbers $b^\even_*$ are defined only when $t$ is even and that the numbers $b^\odd_*$ are defined only when $t$ is odd. Using these definitions, the bounds for $b_1^0$ and $b_1^1$ follow from Theorem \ref{thm_hoffman_explicit} and Lemma \ref{lem_upper_bnd_gauss}, the bound for $b_2^0$ follows from Lemma \ref{lem_psi_bounds}, the bound for $b_2^1$ follows from Lemma \ref{lem_psi_bounds} and Lemma \ref{lem_est_gens}, and the bound for $b^1_3$ follows from Lemma \ref{lem_upper_bnd_gauss}. \end{proof}

Additionally, we need lower bounds for the size of our examples.
Recall that the case $t = d-1$ is not covered by Theorem \ref{thm_apprx} and that the case $t = 1$ is trivial (For $t=1$ the only maximal EKR-sets under investigation consist of all generators on a totally isotropic subspace of rank $d-1$). In order to prove Theorem \ref{thm_apprx}, we can therefore assume that $2 \leq t \leq d-2$.

\begin{lemma} Suppose that $d - 2 \geq t \geq 2$. \label{apprx_ex}
Then the EKR-sets described in Example \ref{lem_ex_t_even} have size at least
\begin{align*}
  y^\even := \gauss{d}{\frac{t}{2}} q^{\epsilon \frac{t}{2} + \binom{t/2}{2}} \geq q^{\epsilon \frac{t}{2} + \binom{t/2}{2} + \frac{t}{2}(d - \frac{t}{2})} (1+q^{-1}).
\end{align*}
and the ones described in Example \ref{lem_ex_t_odd} have size at least
\begin{align*}
  y^\odd := \gauss{d-1}{\frac{t-1}{2}} q^{\epsilon \frac{t+1}{2} + \binom{\frac{t+1}{2}}{2}} ( 1 + q^{-\epsilon} ) \geq q^{\epsilon \frac{t+1}{2} + \binom{(t+1)/2}{2} + \frac{t-1}{2} (d - \frac{t+1}{2})} (1+q^{-1}) ( 1 + q^{-\epsilon} ).
\end{align*}
\end{lemma}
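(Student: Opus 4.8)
The plan is to decouple the lemma into two independent tasks for each parity: first identify $y^\even$ (resp.\ $y^\odd$) as the exact number of generators meeting the fixed flat in the \emph{smallest} dimension permitted by the example, and then bound the resulting Gaussian coefficient from below by Lemma \ref{lem_lowest_gauss}. In each case it suffices to count the generators that meet the fixed subspace in exactly the minimal allowed dimension, since these already form a subset of the relevant EKR set and hence furnish a lower bound on its size.

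For the even case the fixed object $U$ is a generator, and a generator meeting $U$ in dimension exactly $d-\frac{t}{2}$ meets it in codimension exactly $\frac{t}{2}$. By Corollary \ref{kor_dis_gens} together with \eqref{gauss_duality}, the number of such generators is precisely $\gauss{d}{t/2} q^{\binom{t/2}{2}+\frac{t}{2}\epsilon}=y^\even$. The asserted lower bound on $y^\even$ then reduces to $\gauss{d}{t/2}\ge (1+q^{-1})q^{\frac{t}{2}(d-\frac{t}{2})}$, which is exactly Lemma \ref{lem_lowest_gauss} with $n=d$ and $k=\frac{t}{2}$; the hypotheses $d>\frac{t}{2}>0$ hold because $2\le t\le d-2$.

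For the odd case the fixed object $U$ is a $(d-1)$-space, and I would count generators $a$ with $\dim(a\cap U)=d-\frac{t+1}{2}$ by first choosing $V:=a\cap U$, a subspace of $U$ of that dimension (giving $\gauss{d-1}{(t-1)/2}$ choices by \eqref{gauss_duality}), and then passing to the quotient $V^\perp/V$, a polar space of rank $r:=\frac{t+1}{2}$ in which $U/V$ is a totally isotropic subspace of dimension $r-1$. The generators $a$ with $a\cap U=V$ correspond bijectively to generators of $V^\perp/V$ disjoint from $U/V$, so the count hinges on the sub-claim that a rank-$r$ polar space has exactly $q^{\binom{r}{2}+r\epsilon}(1+q^{-\epsilon})$ generators disjoint from a fixed $(r-1)$-dimensional totally isotropic subspace $W$. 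To prove this I would fix a generator $G\supseteq W$ and note that any generator $a$ disjoint from $W$ meets $G$ in dimension at most $1$, since $W$ is a hyperplane of $G$. The generators with $a\cap G=0$ number $q^{\binom{r}{2}+r\epsilon}$ by Corollary \ref{kor_dis_gens} (case $s=r$), and each is automatically disjoint from $W$. A generator meeting $G$ in a single point $p$ is disjoint from $W$ if and only if $p\notin W$; there are $q^{r-1}$ such points, and by transitivity of the stabilizer of $G$ on the points of $G$ together with Corollary \ref{kor_dis_gens} (case $s=r-1$) each point carries $q^{\binom{r-1}{2}+(r-1)\epsilon}$ generators. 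Since $\binom{r-1}{2}+(r-1)=\binom{r}{2}$, these contribute $q^{\binom{r}{2}+(r-1)\epsilon}$, and summing gives the sub-claim. Multiplying by $\gauss{d-1}{(t-1)/2}$ recovers $y^\odd$, and the final inequality is Lemma \ref{lem_lowest_gauss} with $n=d-1$, $k=\frac{t-1}{2}$, whose hypotheses hold since $t\ge 3$ and $d\ge t+2$.

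I expect the counting sub-claim in the odd case to be the main obstacle, as it is the one step that is not a direct quotation of an earlier result: it needs the case split on $\dim(a\cap G)$ and the homogeneity argument for the single-point intersection count. Everything else---the even count, and both Gaussian estimates---is an immediate application of Corollary \ref{kor_dis_gens} and Lemma \ref{lem_lowest_gauss}.
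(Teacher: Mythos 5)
Your proposal is correct and follows essentially the same route as the paper: in both parities you count only the generators meeting the fixed flat in the minimal admissible dimension, obtain $y^\even$ directly from Corollary \ref{kor_dis_gens} and $y^\odd$ by fixing the intersection $V$ with the $(d-1)$-space and splitting according to whether the generator meets the ambient generator $G$ in $V$ or in a one-bigger subspace (your quotient-space case split on $\dim(a\cap G)\in\{0,1\}$ is the same decomposition, phrased upstairs in the paper), and then apply Lemma \ref{lem_lowest_gauss}. The only cosmetic difference is that for the per-point count the paper applies Corollary \ref{kor_dis_gens} in the quotient of $V$ directly, whereas you use homogeneity of $G$ to divide the total count over its points; both are valid.
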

\begin{proof}
  \textbf{Case $t$ even.}
  Let $G_0$ be a generator and let $Y'$ be the set of all generators $G$ with $\dim(G\cap G_0)=d-\frac{t}{2}$. Obviously, $Y'$ has less elements than the EKR-sets constructed in Example \ref{lem_ex_t_even}. Lemma \ref{kor_dis_gens} shows $|Y'|=y^\even$.
  Lemma \ref{lem_lowest_gauss} proves the lower bound for $y^\even$.

  \textbf{Case $t$ odd.}
  Let $G$ be a generator and $U$ a subspace of $G$ of codimension one.   Let $Y'$ be the set of all generators which meet $U$ in dimension $d - \frac{t}{2} - \frac{1}{2}$.
  Obviously, $Y'$ has at most as many elements as the EKR-sets constructed in Example \ref{lem_ex_t_odd}. We shall show $|Y'|=y^\odd$. Let $T$ be one of the $\gauss{d-1}{\frac{t-1}{2}}$ subspaces of $U$ of dimension $d - \frac{t}{2} - \frac{1}{2}$.

  If $H \in Y'$ with $U \cap H = T$, then $G \cap H = T$ or $V := G \cap H$ has dimension $\dim(T)+1$ and satisfies $V \cap U = T$. In the quotient geometry on $T$, Corollary \ref{kor_dis_gens}   shows that there exist
  \begin{align*}
    q^{\epsilon \frac{t+1}{2} + \binom{\frac{t+1}{2}}{2}}
  \end{align*}
  generators $H$ with $H \cap G = T$. The number of subspaces $V$ of $G$ with
  $\dim(V) = \dim(T)+1$ and $V \cap U = T$ is $q^{\frac{t-1}{2}}$
  as can be seen in the quotient geometry on $T$. For each such $V$, Corollary \ref{kor_dis_gens}
  applied to the quotient geometry of $V$ shows that there are
  \begin{align*}
    q^{\epsilon \frac{t-1}{2} + \binom{\frac{t-1}{2}}{2}}
  \end{align*}
  generators $H$ with $H \cap G = V$. Hence
  \begin{align*}
    |Y'|=\gauss{d-1}{\frac{t-1}{2}} \left( q^{\epsilon \frac{t+1}{2} + \binom{\frac{t+1}{2}}{2}} +  q^{\frac{t-1}{2}} \cdot q^{\epsilon \frac{t-1}{2} + \binom{\frac{t-1}{2}}{2}}\right)
    = \gauss{d-1}{\frac{t-1}{2}} q^{\epsilon \frac{t+1}{2} + \binom{\frac{t+1}{2}}{2}} \left( 1 + q^{-\epsilon} \right).
  \end{align*}
  Lemma \ref{lem_lowest_gauss} shows the remaining inequality.
  \end{proof}

All left to do is to compare $b_1^\even + b_2^\even$, respectively, $2b_1^\odd + b_2^\odd + b_3^\odd$ to the sizes of the examples ($y^\even$, respectively, $y^\odd$) using the given upper, respectively, lower bounds.
Then Theorem \ref{thm_max_t_even} and Theorem \ref{thm_max_t_odd} yield our last theorem.
Hence, we compare all degrees of the bounds in $q$ to $y^\even$, respectively, $y^\odd$.
This yields for $q \geq 3$,
\begin{align*}
  &\delta_1^\even := \deg(y^\even) - \deg(b_1^\even) = \begin{cases} d + 1 - \frac{2\epsilon+1}{4} t - \frac{5}{8} t^2 \text{ if } \epsilon \geq 1,\\
  d + 1 - \frac{5-2\epsilon}{4} t - \frac{5}{8} t^2 \text{ if } \epsilon \leq 1,\end{cases}\\
  &\delta_2^\even := \deg(y^\even) - \deg(b_2^\even) = d + 2 - \frac{5}{2} t,\\
  &\delta_1^\odd := \deg(y^\odd) - \deg(b_1^\odd) = d + \frac{25}{8} + \epsilon/2 - \frac{(\epsilon+1)t}{2} - \frac{5}{8} t^2,\\
  &\delta_2^\odd := \deg(y^\odd) - \deg(b_2^\odd) = d + \frac{7}{2} - \frac{5}{2} t,\\
  &\delta_3^\odd := \deg(y^\odd) - \deg(b_3^\odd) = \epsilon.
\end{align*}

These approximations make it clear that Theorem \ref{thm_max_t_even} and Theorem \ref{thm_max_t_odd} are non-trivial for $d$ large and $t$ fixed.
In the following we want to be more specific about the necessary size of $d$.
Recall that $\gamma = 2$ if $q=3$, and $\gamma = 1+2q^{-1}$ if $q \geq 4$.

\begin{lemma}\label{lem_spec_tables}
   Let $q$ be an integer with $q\ge 3$.
    \begin{enumerate}[(a)]
   \item $q^z (1+q^{-1}) \geq \gamma^2 ((1+q^{-\epsilon})^{\frac{\alpha}{\alpha-1}} + \tfrac{1}{1-q^{-2}})$ for all integers $z\ge 3$.
   \item $q^z (1+q^{-1})(1+q^{-\epsilon}) \geq \gamma q^{z-\epsilon} + \gamma^2 (1+q^{-\epsilon})^{\frac{\alpha}{\alpha-1}} (2 + \tfrac{1}{1-q^{-2}})$ for all integers $z\ge 4$.
  \end{enumerate}
\end{lemma}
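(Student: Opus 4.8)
The plan is to exploit that in each inequality the right-hand side is, apart from the single term $\gamma q^{z-\epsilon}$ in part (b), independent of $z$, while the dominant term on the left grows like $q^z$; so each statement will reduce to its smallest admissible value of $z$. For part (a) the whole right-hand side is constant in $z$ and $q^z(1+q^{-1})$ is increasing in $z$, so it suffices to verify the claim at $z=3$. For part (b) I would first expand $q^z(1+q^{-1})(1+q^{-\epsilon}) = q^z(1+q^{-1}) + q^{z-\epsilon}(1+q^{-1})$ and move $\gamma q^{z-\epsilon}$ to the left, which produces the coefficient $(1+q^{-1})-\gamma$ in front of $q^{z-\epsilon}$. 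This coefficient is nonpositive for every $q\ge 3$ (it is $-\tfrac23$ for $q=3$ and $-q^{-1}$ for $q\ge 4$), so using $q^{z-\epsilon}\le q^z$ (valid since $\epsilon\ge 0$) bounds the left side below by $q^z\bigl(2(1+q^{-1})-\gamma\bigr)$. Since $2(1+q^{-1})-\gamma$ equals $\tfrac23$ for $q=3$ and $1$ for $q\ge 4$, it is at least $\tfrac23>0$, so this lower bound is again increasing in $z$ and it suffices to check $z=4$.

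The remaining ingredient is a uniform bound on the factor $(1+q^{-\epsilon})^{\frac{\alpha}{\alpha-1}}$, which is exactly the function $g(\epsilon,q)$ of Corollary \ref{lem_log_quotient2}. By the monotonicity proved there, $g(\epsilon,q)\le g(0,q)\le g(0,3)$ for all $\epsilon\ge 0$ and $q\ge 3$, and $g(0,3)=2^{\alpha(0,3)/(\alpha(0,3)-1)}$ is a fixed constant less than $3.3$. Combining this with the worst-case constant bounds $\gamma^2\le 4$ and $\tfrac{1}{1-q^{-2}}\le\tfrac98$ (both attained at $q=3$, and strictly smaller for $q\ge 4$) collapses the problem to two purely numerical inequalities that hold uniformly in $q$. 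For (a) one checks $q^z(1+q^{-1})\ge 36\ge 4\bigl(3.3+\tfrac98\bigr)$, and for (b) one checks $q^z\bigl(2(1+q^{-1})-\gamma\bigr)\ge 54\ge 4\cdot 3.3\cdot\bigl(2+\tfrac98\bigr)$, both of which hold with room to spare; since the left-hand sides are minimised at $q=3$ and grow with $q$ and $z$, no case split in $q$ is actually needed beyond recording the value of $\gamma$.

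The only genuinely delicate point is the sign bookkeeping in part (b): one must confirm simultaneously that the coefficient $(1+q^{-1})-\gamma$ is nonpositive, so that replacing $q^{z-\epsilon}$ by $q^z$ gives a valid lower bound, and that the residual coefficient $2(1+q^{-1})-\gamma$ stays positive, so that the reduction to the minimal $z$ is legitimate. Both hold for $q=3$ with $\gamma=2$ and for $q\ge 4$ with $\gamma=1+2q^{-1}$, so the argument goes through; everything else is monotonicity together with the single explicit estimate $g(0,3)<3.3$ furnished by Corollary \ref{lem_log_quotient2}.
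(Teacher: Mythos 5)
Your argument is correct and follows essentially the same route as the paper: reduce to the minimal admissible $z$, absorb $\gamma q^{z-\epsilon}$ into the left-hand side using $\gamma\ge 1+q^{-1}$ (so the coefficient $(1+q^{-1})-\gamma$ is nonpositive), and control $(1+q^{-\epsilon})^{\frac{\alpha}{\alpha-1}}$ via the monotonicity statements of Corollary \ref{lem_log_quotient2}. The only cosmetic difference is the endgame: you finish with uniform worst-case constants ($\gamma^2\le 4$, $\tfrac{1}{1-q^{-2}}\le\tfrac{9}{8}$, $g(0,3)<3.3$) and a single numerical check dominated by $q=3$, whereas the paper first reduces to $\epsilon=0$ and then handles $q=3$ and $q\ge 4$ separately by monotonicity in $q$; both are sound.
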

\begin{proof}
  (a) We may assume that $z=3$. In view of Corollary \ref{lem_log_quotient2}, we may also assume that $e=0$. Then the inequality is easily checked when $q=3$ and $\gamma=2$. Suppose now that $q\ge 4$ and $\gamma=1+2q^{-1}$. The left hand side of the inequality is monotonically increasing in $q$. Each term $\gamma^2=(1+2/q)^2$, $1/(1-q^{-2})$ and $(1+q^{-\epsilon})^{\frac{\alpha}{\alpha-1}}$ (with $\epsilon=0$) on the right hand side is monotonically decreasing in $q$, for the last term this follows again from Corollary \ref{lem_log_quotient2}. It therefore remains to verify the inequality for $q=4$ and $\epsilon=0$. This is straightforward

  (b) We write the inequality in the form
  \[
    q^z (1+q^{-1}) -(\gamma-1-q^{-1}) q^{z-\epsilon}\geq  \gamma^2 (1+q^{-\epsilon})^{\frac{\alpha}{\alpha-1}} (2 + \tfrac{1}{1-q^{-2}}).
  \]
  For fixed $q$, Corollary \ref{lem_log_quotient2} shows that the right hand side is monotonically decreasing in $e$, and $\gamma\ge 1+q^{-1}$ implies that the left hand side is monotonically increasing in $e$. Hence it is sufficient to verify the inequality for $\epsilon=0$. As $\gamma\le 2$, it then suffices to verify the inequality for $\epsilon=0$ and $z=4$. This is easily done by hand for $q=3$. For $q\ge 4$, we have $\gamma=1+2q^{-1}$, and thus we have to prove that
  \[
    q^4\geq (1+2q^{-1})^2(2 + \tfrac{1}{1-q^{-2}}) 2^{\frac{\alpha}{\alpha-1}}
  \]
  On the right hand side, the first two factors are obviously monotonically decreasing in $q$ and Lemma \ref{lem_log_quotient2} shows the same for third factor. Since the left hand side is monotonically increasing in $q$, it suffices to verify the inequality for $q=4$, which is easy.
\end{proof}

\begin{proof}[Proof of Theorem \ref{thm_apprx}]
  By hypothesis in Theorem \ref{thm_apprx} we have $8d\ge 5t^2+20t+20$.

  \textbf{Case $t$ is even} For $t=0$, the assertion of Theorem \ref{thm_apprx} is trivial, so we may assume that $t\ge 2$. In view of Theorem \ref{thm_max_t_even} and Lemma \ref{apprx_ex}, we have to show that $y^\even > b_1^\even + b_2^\even$. This follows from the bounds on $b_0^\even$, $b_1^\even$ and $y^\even$ given in Lemmas \ref{lem_appx_bnd} and \ref{apprx_ex} and from the assertion in (a) of Lemma \ref{lem_spec_tables} provided that $\delta_1^\even\ge 3$ and $\delta_2^\even\ge 3$.
  This follows easily from $8d\ge 5t^2+20t+20$ and $t\ge 2$.

  \textbf{Case $t$ is odd} For $t=1$, it is well-known (and easy to see) that the largest $(d, 1)$-EKR set is the set of all generators through a fixed $(d-1)$-space. A proof can be found in \cite[Theorem 6.4.10]{vanhove_phd}. Thus Theorem \ref{thm_apprx} is correct for $t=1$, and we may assume that $t\ge 3$. The argument is now similar to the one used in the case when $t$ is even, but now we use \ref{thm_max_t_odd} and have to show that $y^\odd > 2b_1^\odd + b_2^\odd + b_3^\odd$. As before, this follows from the bounds on $b_i^\odd$ and $y^\odd$ given in Lemmas \ref{lem_appx_bnd} and \ref{apprx_ex} and from the assertion in (b) of Lemma \ref{lem_spec_tables} provided that $\delta_1^\even\ge 4$ and $\delta_2^\even\ge 4$.   This follows easily from $8d\ge 5t^2+20t+20$ and $t\ge 3$.
\end{proof}

\begin{Remark}
  \begin{enumerate}[(a)]
   \item Obviously, even the trivial upper bound for $c_{ 2t-1, t}$, i.e. the number of generators in a polar space of rank $2t-1$, is independent
   of $d$. If one uses this bound instead of Theorem \ref{thm_hoffman_explicit} to bound $c_{2t-1,t}$, then the restriction on $t$ is
   approximately $t \leq \sqrt{\frac{8d}{9}}$ as in the $q=2$ case.
   \item If one uses the linear programming bound instead of Hoffman's bound to approximate $c_{d,t}$, then computer results suggest that the conditions on $t$ in Theorem \ref{thm_apprx} should simplify to approximately $t \leq 2 \sqrt{2d}$ for $d$ large.\footnote{The first author provides a list of the conjectured linear programming bounds on his homepage.}
   \item If one could prove that $c_{2t-1, t}$ is the size of Example \ref{lem_ex_t_even}, respectively, that $c_{ 2t-1, t}$ is the size of Example \ref{lem_ex_t_odd}, then the conditions on $t$ would improve to approximately $t \leq \frac{2}{5} d$.
   So it would be sufficient to focus on these cases to improve the results significantly.
  \end{enumerate}
\end{Remark}

\section{Conclusions}

The authors started their work on this project in the hope that it would be reasonable simple to generalize the classification of $(d, d-1)$-EKR sets of maximum size provided in \cite{MR2755082} by applying Hoffman's bound or one of its generalizations since Hoffman's bound is tight in this case \cite{MR618532} if $\epsilon \neq \frac{1}{2}$.
It turns out that for nearly all $(d, t)$-EKR sets Hoffman's bound is far larger than the largest known examples.

In general, linear programming could be used to obtain better algebraic bounds for all $d$.
While computer results suggest that these upper bounds should be able to improve Theorem \ref{thm_apprx} to approximately $t \leq 2 \sqrt{2d}$, even these bounds are still far away from the largest known examples.
Hence, the authors had to rely explicitly on the geometrical properties of polar spaces for the classification.
It might be very interesting to find a purely algebraical proof of the presented results, since our approach stops working as soon as $t$ is too large compared to $d$, while techniques from algebraic combinatorics seem to work the best when $t$ is large compared to $d$.

In general, a classification of all $(d, t)$-EKR sets seems to be very desirable, since we conjecture that it would turn out to be the following, nice looking result.

\begin{conjecture}\label{conj_classification_d_t_ekr}
  Let $Y$ be a $(d, t)$-EKR set of maximum size. Then one of the following cases occurs:
  \begin{enumerate}[(a)]
   \item $Y$ is the set of all generators on a fixed $(d-t)$-space.
   \item $t$ is even and $Y$ is the set of all generators meeting a fixed generator in at least dimension $d-\frac{t}{2}$.
   \item $t$ is odd and $Y$ is a set of all generators meeting a fixed $(d-1)$-dimensional space in at least dimension $d - \frac{t}{2} - \frac{1}{2}$.
   \item $\epsilon = 0$, $t = d-1$, $d$ is odd and $Y$ is the largest example for $Q^+(2d-1, q)$ as given in \cite{MR2755082}.
  \end{enumerate}
\end{conjecture}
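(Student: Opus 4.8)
The plan is to prove the classification by strong induction on the rank $d$, comparing the sizes of the three candidate families (a), (b)/(c), and the exceptional hyperbolic family (d), and then invoking the stability results already developed. First I would clear away the boundary values of $t$: the case $t=0$ is immediate, $t=1$ is Lemma \ref{lem_d_t_eq_d_1}, and $t=d-1$ is settled by \cite{MR2755082} (with the remaining Hermitian subcase in \cite{Ihringer_Metsch,Metsch}), where Hoffman's bound is tight whenever $\epsilon\neq\frac12$; this is exactly where the anomalous family (d) appears, namely for $\epsilon=0$ and $d$ odd. Thus it remains to treat $2\le t\le d-2$.

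For $2\le t\le d-2$ I would split according to the ratio $t/d$. In the range $d\ge 2t$ (respectively $d\ge 2t-1$ for $t$ odd) the stability Theorems \ref{thm_max_t_even} and \ref{thm_max_t_odd} apply. Their thresholds $b_1^\even+b_2^\even$ and $2b_1^\odd+b_2^\odd+b_3^\odd$ contain the factors $c_{2t-1,t}$ and $c_{2t-2,t}$, the maximum EKR sizes in polar spaces of rank $2t-1$ and $2t-2$. Since these ranks are strictly below $d$ under the hypothesis $d\ge 2t$ (resp. $d\ge 2t-1$), the inductive hypothesis supplies their exact values together with the extremal configurations. Feeding these exact values --- which are far smaller than the Hoffman estimate used in Theorem \ref{thm_hoffman_explicit} --- into the degree computations $\delta_1^\even,\delta_2^\even,\delta_1^\odd,\delta_2^\odd,\delta_3^\odd$ should, as anticipated in the closing Remark of Section \ref{sec_felina}, enlarge the valid window from $t\lesssim\sqrt{8d/5}$ to roughly $t\le\tfrac25 d$, so that $y^\even>b_1^\even+b_2^\even$ (resp. $y^\odd>2b_1^\odd+b_2^\odd+b_3^\odd$) holds precisely when family (b)/(c) is the largest example; there the stability theorems pin $Y$ to Example \ref{lem_ex_t_even} or \ref{lem_ex_t_odd}. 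Where instead the point-pencil (a) is the larger example --- which happens near $d\approx 2t$, since $\omega(d,t)=\prod_{i=0}^{t-1}(q^{i+\epsilon}+1)$ is independent of $d$ while $y^\even,y^\odd$ grow with $d$ --- one shows that every maximum set has all triple intersections of dimension $>d-\tfrac32 t$ (resp. $>d-\tfrac32 t+\tfrac12$), so that Lemma \ref{lem_upperbound_1_t_even} (resp. Lemma \ref{lem_t_odd_P_large}) bounds $|Y|$ by $b_1$, after which a quotient-induction forces the common $(d-t)$-space of family (a).

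The genuinely hard regime is $d<2t$, that is $t>d/2$, where the rank hypotheses of the stability theorems fail and where, crucially, the boundary values $c_{2t-1,t}$ and $c_{2t-2,t}$ themselves live: at rank $2t-1$ the effective ratio $t/(2t-1)$ already exceeds $\tfrac12$, so the induction above bottoms out precisely into instances of this type rather than recursing below them. Here I expect family (a) to be the unique maximum away from $t=d-1$, but I have no purely geometric reduction. The natural tool is the linear-programming (Delsarte) bound on the association scheme of Section \ref{sec_assoc}, whose eigenmatrix entries $P_{r,s}$ are given explicitly in Theorem \ref{thm_ev_gen}; the plan is to exhibit a feasible dual solution certifying $c_{d,t}=\omega(d,t)$ throughout $d/2<t<d-1$, with the hyperbolic point $\epsilon=0$, $t=d-1$, $d$ odd as the single place where the optimum jumps to family (d).

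Constructing and verifying this dual certificate is the main obstacle. The Conclusions already observe that Hoffman's bound is tight only at $t=d-1$ and that even the full LP bound appears to lie far from the examples for most $(d,t)$, so a clean closed-form certificate may simply fail to exist; in that event one must combine partial algebraic bounds with the geometric structure of Lemmas \ref{lem_collected_properties_t_even} and \ref{lem_collected_properties_t_odd} to control the few configurations with large triple intersections that survive in the large-$t$ range. Because this regime furnishes exactly the base data $c_{2t-1,t}$ and $c_{2t-2,t}$ that the induction of the second paragraph consumes, resolving it is what would convert the conditional scheme above into a complete proof of the conjecture.
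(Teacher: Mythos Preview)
The statement you are attempting to prove is not a theorem in the paper but Conjecture~\ref{conj_classification_d_t_ekr}; the paper offers no proof, and indeed the Conclusions section explains why the authors could not obtain one. There is therefore no ``paper's own proof'' to compare against, and any valid argument would constitute a genuine new result.

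Your proposal is not a proof but a strategy outline, and you yourself identify the decisive gap: the regime $d<2t$. Your induction consumes the exact values $c_{2t-1,t}$ and $c_{2t-2,t}$, and as you note these live at ranks $d'=2t-1$ and $d'=2t-2$ where $t/d'>\tfrac12$, so the recursion bottoms out into precisely the cases you have no argument for. For that regime you propose to exhibit a dual LP certificate giving $c_{d,t}=\omega(d,t)$, but the paper's Conclusions already report that even the full LP bound is far from the conjectured extremal sizes for most $(d,t)$; a certificate of the kind you need is not expected to exist, and you offer no alternative mechanism. Without this base case the entire inductive scheme is vacuous.

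There is a second, smaller gap even in the range $d\ge 2t$. Feeding the (hypothetical) exact $c_{2t-1,t}$ into the thresholds only pushes the stability theorems to roughly $t\le\tfrac25 d$, as the closing Remark records; it does not cover all of $t\le d/2$. In the window $\tfrac25 d\lesssim t\le d/2$ you assert that all maximum sets have large triple intersections and then invoke Lemmas~\ref{lem_upperbound_1_t_even} and~\ref{lem_t_odd_P_large} followed by a ``quotient-induction'' to force a common $(d-t)$-space. But those lemmas only give $|Y|\le b_1$, which is an upper bound, not a structural statement; and nothing in the paper shows that equality in the $b_1$ bound forces the point-pencil configuration. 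The quotient-induction step is asserted rather than argued. So even modulo the base case, the middle range is not actually handled.
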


\end{document}